\newtheorem{theorem}{Theorem}[section]
\newtheorem{lemma}[theorem]{Lemma}
\newtheorem{corollary}[theorem]{Corollary}
\theoremstyle{definition}
\newtheorem{definition}[theorem]{Definition}
\theoremstyle{remark}
\newtheorem{remark}[theorem]{Remark}
\numberwithin{equation}{section}
\newcommand{\B}{\mathbb{B}}
\newcommand{\C}{\mathbb{C}}
\newcommand{\E}{\mathbb{E}}
\newcommand{\K}{\mathbb{K}}
\newcommand{\N}{\mathbb{N}}
\newcommand{\Z}{\mathbb{Z}}
\newcommand\cE{\mathcal{E}}
\newcommand\cF{\mathcal{F}}
\newcommand\cH{\mathcal{H}}
\newcommand\cK{\mathcal{K}}
\newcommand\cO{\mathcal{O}}
\newcommand\Ad{\operatorname{Ad}}
\newcommand\Aut{\operatorname{Aut}}
\newcommand\id{\operatorname{id}}
\newcommand\Hom{\operatorname{Hom}}
\newcommand\End{\operatorname{End}}
\newcommand\Tr{\operatorname{Tr}}
\newcommand\inpr[2]{\langle{#1,#2}\rangle}
\newcommand\halpha{\hat{\alpha}}
\newcommand\hbeta{\hat{\beta}}
\newcommand\hotimes{\hat{\otimes}}
\newcommand\xbf{\mbox{\boldmath $\mathnormal{x}$}}
\begin{document}
\title{Quasi-free actions of finite groups on the Cuntz algebra $\cO_\infty$} 

\author{Pavle Goldstein}
\address{Department of Mathematics\\ University of Zagreb\\ 
41000 Zagreb\\ Croatia}
\email{payo@math.hr}

\author{Masaki Izumi}
\address{Department of Mathematics\\ Graduate School of Science\\
Kyoto University\\ Sakyo-ku, Kyoto 606-8502\\ Japan}
\email{izumi@math.kyoto-u.ac.jp}
\thanks{Supported in part by the Grant-in-Aid for Scientific Research (B) 22340032, JSPS}



\begin{abstract} We show that any faithful quasi-free actions of a finite group on the Cuntz algebra 
$\cO_\infty$ are mutually conjugate, and that they are asymptotically representable. 
\end{abstract}

\maketitle

\section{Introduction} 
The Cuntz algebra $\cO_n$, $n=2,3,\cdots,\infty$, is the universal $C^*$-algebra generated by isometries 
$\{s_i\}_{i=1}^n$ with mutually orthogonal ranges, satisfying $\sum_{i=1}^ns_is_i^*=1$ if $n$ is finite. 
It is well known that the two algebras $\cO_2$ and $\cO_\infty$, among the others, play special roles in the  
celebrated classification theory of Kirchberg algebras (see \cite{Ph}, \cite{R01}). 

An action $\alpha$ of a group $G$ on $\cO_n$ is said to be {\it quasi-free} if $\alpha_g(\cH_n)=\cH_n$ for all $g\in G$, 
where $\cH_n$ is the closed linear span of the generators $\{s_i\}_{i=1}^n$. 
We restrict our attention to finite $G$ throughout this note. 
To develop a $G$-equivariant version of the classification theory, it is expected that $G$-actions on $\cO_2$ with the 
Rohlin property and the quasi-free $G$-actions on $\cO_\infty$ would play similar roles as $\cO_2$ and $\cO_\infty$ do in 
the case without group actions. 
Since we have already had a good understanding of the former thanks to \cite{IDuke}, our task in this note is to investigate the latter, 
the quasi-free $G$-actions on $\cO_\infty$. 

The space $\cH_n$ has a Hilbert space structure with inner product $t^*s=\inpr{s}{t}1$, and a quasi-free $G$-action $\alpha$ 
gives a unitary representation $(\pi_\alpha,\cH_\alpha)$, where $\pi_\alpha(g)$ is the restriction of $\alpha_g$ to $\cH_\alpha$.  
It is known that the association $\alpha\mapsto \pi_\alpha$ gives a one-to-one correspondence between the quasi-free $G$ actions 
on $\cO_n$ and the unitary representations of $G$ in $\cH_n$. 
The conjugacy class of $\alpha$ depends on the unitary equivalence class of 
$(\pi_\alpha,\cH_n)$, at least a priori. 
Indeed, it really does  when $n$ is finite, and this can be seen by computing the $K$-groups of the crossed product 
(see, for example, \cite{CE}, \cite{IDuke}, \cite{IAdv}, \cite{MRS}). 
However, when $n=\infty$, the pair $(\cO_\infty,\alpha)$ is $KK_G$-equivalent to the pair $(\C,\id)$, and 
there is no way to differentiate the quasi-free actions as far as $K$-theory is concerned. 

One of the purposes of this note is to show that any two faithful quasi-free $G$-actions on $\cO_\infty$ are indeed 
mutually conjugate for every finite group $G$ (Corollary \ref{unique}). 
Our main technical result is Theorem \ref{GLP}, an equivariant version of Lin-Phillips's result  
\cite[Theorem 3.3]{LP}, and Corollary \ref{unique} follows from it via Theorem \ref{splitting}, 
an equivariant version of Kirchberg-Phillips's $\cO_\infty$ theorem \cite[Theorem 3.15]{KP}.  

Using Theorem \ref{GLP}, we also show that the quasi-free actions are asymptotically representable for any finite group $G$, 
which is another purpose of this note. 
The notion of asymptotic representability for group actions was introduced by the second-named author, 
and it is found to be important in the recent development of the classification of group actions on 
$C^*$-algebras (see \cite{IM}, \cite{MRS}).

The reader is referred to \cite{R01} for the basic properties and classification results for Kirchberg algebras. 
We denote by $\K$ the set of compact operators on a separable infinite dimensional Hilbert space. 
For a $C^*$-algebra $A$, we denote by $\tilde{A}$ and $M(A)$ the unitization and the multiplier algebra of $A$ respectively.  
When $A$ is unital, we denote by $U(A)$ the unitary group of $A$. 
For a homomorphism $\rho:A\rightarrow B$ between $C^*$-algebras $A$, $B$, we denote by $K_*(\rho)$ the 
homomorphism from $K_*(A)$ to $K_*(B)$ induced by $\rho$. 
We denote by $A\otimes B$ the minimal tensor product of $A$ and $B$. 

This work originated from the first-named author's unpublished preprint \cite{G}, where the idea of developing an equivariant 
version of Lin-Phillips's argument was introduced. 
Some results in this note are also obtained by N. C. Phillips, and the authors would like to thank him for informing of it. 
\section{Preliminaries for $G$-$C^*$-algebras}

We fix a finite group $G$. 
By a $G$-$C^*$-algebra $(A,\alpha)$, we mean a $C^*$-algebra $A$ with a fixed $G$-action $\alpha$. 
We denote by $A^G$ the fixed point algebra 
$$\{a\in A|\; \alpha_g(a)=a,\;\forall g\in G\}.$$ 

We denote by $\{\lambda_g^\alpha\}_{g\in G}$ the implementing unitary representation 
of $G$ in the crossed product $A\rtimes_\alpha G$. 
For a finite dimensional (not necessarily irreducible) unitary representation $(\pi,H_\pi)$ of $G$, 
we introduce a homomorphism 
$$\halpha_\pi:A\rtimes_\alpha G\rightarrow (A\rtimes_\alpha G)\otimes B(H_\pi),$$
which is a part of the dual coaction of $\alpha$, by 
$\halpha_\pi(a)=a\otimes 1$ for $a\in A$, and $\halpha_\pi(\lambda^\alpha_g)=\lambda^\alpha_g\otimes \pi(g)$ 
for $g\in G$. 
We denote by $\hat{G}$ the unitary dual of $G$, and by $\Z\hat{G}$ the representation ring of $G$. 
Then identifying $K_*(A\rtimes_\alpha G)$ with $K_*((A\rtimes_\alpha G)\otimes B(H_\pi))$, we get a 
$\Z \hat{G}$-module structure of $K_*(A\rtimes_\alpha G)$ from $K_*(\halpha_\pi)$. 

Let 
$$e_\alpha=\frac{1}{\#G}\sum_{g\in G}\lambda^\alpha_g,$$
which is a projection in $(A\rtimes_\alpha G)\cap {A^G}'$. 
We denote by $j_\alpha$ the homomorphism from $A^G$ into $A\rtimes_\alpha G$ defined by 
$j_\alpha(x)=xe_\alpha$. 
When $A$ is simple and $\alpha$ is outer, that is, $\alpha_g$ is outer for every $g\in G\setminus \{e\}$, 
then $K_*(j_\alpha)$ is an isomorphism from $K_*(A^G)$ onto $K_*(A\rtimes_\alpha G)$. 
When $A$ is purely infinite and simple, and $\alpha$ is outer, 
then $A^G$ and $A\rtimes_\alpha G$ are purely infinite  and simple. 

A $G$-homomorphism $\varphi$ from a $G$-$C^*$-algebra $(A,\alpha)$ into another $G$-$C^*$-algebra 
$(B,\beta)$ is a homomorphism from $A$ into $B$ intertwining the two $G$-actions $\alpha$ and $\beta$. 
Such $\varphi$ gives rise to an element in the equivariant $KK$-group $KK_G(A,B)$, 
which is denoted by $KK_G(\varphi)$. 
We denote by $\Hom_G(A,B)$ the set of nonzero $G$-homomorphisms from $(A,\alpha)$ into $(B,\beta)$. 
Two actions $\alpha$ and $\beta$ are said to be conjugate if there exists an invertible element in 
$\Hom_G(A,B)$. 
Two $G$-homomorphisms $\varphi,\psi\in \Hom_G(A,B)$ are said to be $G$-unitarily equivalent if 
there exists a unitary $u\in M(B)^G$ satisfying
$\varphi(x)=u\psi(x)u^*$ for all $x\in A$. 
They are said to be $G$-asymptotically unitarily equivalent 
if there exists a norm continuous family of unitaries $\{u(t)\}_{t\geq 0}$ in $M(B)^G$ satisfying 
$$\lim_{t\to\infty}\|\varphi(x)-\Ad u(t)\circ \psi(x)\|,\quad \forall x\in A.$$
If they satisfy the same condition with a sequence of unitaries $\{u_n\}_{n=1}^\infty$ in $M(B)^G$ 
instead of the continuous family, they are said to be $G$-approximately unitarily equivalent

For a free ultrafilter $\omega\in \beta\N\setminus \N$ and a $G$-$C^*$-algebra $(A,\alpha)$, we use the following notation: 
$$c_\omega(A)=\{(x_n)\in \ell^\infty(\N,A)|\; \lim_{n\to \omega}\|x_n\|=0\},$$
$$A^\omega=\ell^\infty(\N,A)/c_\omega(A).$$
As usual, we often omit the quotient map from $\ell^\infty(\N,A)$ onto $A^\omega$. 
We regard $A$ as a $C^*$-subalgebra of $A^\omega$ consisting of the constant sequences, and we set $A_\omega=A^\omega\cap A'$. 
We denote by $\alpha^\omega$ and $\alpha_\omega$ the $G$-actions on $A^\omega$ and $A_\omega$ induced by $\alpha$ respectively, 
and we regard $(A^\omega,\alpha^\omega)$ and $(A_\omega,\alpha_\omega)$ as $G$-$C^*$-algebras. 

\begin{lemma}\label{ultra} Let $G$ be a finite group, and let $(A,\alpha)$ be a $G$-$C^*$-algebra. 
We assume that $A$ is unital, purely infinite, and simple, and $\alpha$ is outer. 
Let $\omega\in \beta\N\setminus \N$. 
\begin{itemize}
\item[(1)] $A^\omega$ is purely infinite and simple, and $\alpha^\omega$ is outer. 
\item[(2)] If $A$ is a Kirchberg algebra, $A_\omega$ is purely infinite and simple, and $\alpha_\omega$ is outer. 
\end{itemize}
\end{lemma}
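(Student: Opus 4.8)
The plan is to treat the ``purely infinite and simple'' assertions and the ``outer'' assertions by different means.

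For the structural part: $A^\omega$ is simple by the standard argument. Given a nonzero $x\in A^\omega$, lift $x^*x$ to a representing sequence $(y_n)$ with $\lim_{n\to\omega}\|y_n\|>0$, and use pure infiniteness of $A$ to choose a bounded sequence $(r_n)$ with $r_n^*y_nr_n=1$ on a set belonging to $\omega$ and $r_n=0$ elsewhere; then the ideal of $A^\omega$ generated by $x$ contains $1$. That $A^\omega$ is purely infinite then follows because a simple unital $C^*$-algebra containing an infinite projection is purely infinite, and $1_{A^\omega}$ is infinite since $1_A$ is (constant sequences). For outerness of $\alpha^\omega$ I would invoke the fact that an outer automorphism of a simple $C^*$-algebra is properly outer, i.e.\ $\|\alpha_g-\Ad u\|=2$ for every unitary $u\in A$ and every $g\neq e$: if $V\in A^\omega$ were a unitary with $\alpha^\omega_g=\Ad V$, lift $V$ to a sequence of unitaries $(v_n)$, pick contractions $y_n\in A$ with $\|\alpha_g(y_n)-v_ny_nv_n^*\|$ close to $2$, and evaluate on $(y_n)\in A^\omega$ to get $\|\alpha^\omega_g-\Ad V\|\geq 2$, a contradiction. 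This proves (1).

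For (2): that $A_\omega$ is purely infinite and simple when $A$ is a unital Kirchberg algebra is a theorem of Kirchberg, resting on $A$ being strongly purely infinite, and I would cite it. Outerness of $\alpha_\omega$ is the one point that really uses the internal structure of $A$: the properly-outer trick above fails here, because the near-maximizing contractions need not be approximately central. Instead I would use Kishimoto's lemma — since $A$ is simple, $\alpha_g$ ($g\neq e$) is outer, and $A$, being a Kirchberg algebra, has a plentiful supply of approximately central elements, for every finite $F\subseteq A$ and $\epsilon>0$ there is a positive contraction $h\in A$ with $\|h\|=1$, $\|[h,f]\|<\epsilon$, and $\|hf\alpha_g(h)\|<\epsilon$ for all $f\in F$. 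Now suppose $\alpha_{\omega,g}=\Ad V$ with $V=(v_n)$ a unitary in $A_\omega$. Fix a dense sequence $(a_m)$ in $A$ and apply the lemma with $F_n=\{a_1,\dots,a_n,v_n\}$ and $\epsilon=1/n$ to obtain positive contractions $h_n$; put $H=(h_n)\in A^\omega$. The commutator estimates against $(a_m)$ force $H\in A^\omega\cap A'=A_\omega$, the commutator estimate against $v_n$ gives $[H,V]=0$, and $\|h_nv_n\alpha_g(h_n)\|<1/n$ gives $HV\alpha_{\omega,g}(H)=0$. Since $\alpha_{\omega,g}(H)=VHV^*$ and $H$ commutes with $V$, this reads $H^2V^2=0$, whence $H^2=0$ and $H=0$, contradicting $\|H\|=1$.

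The main obstacle is thus concentrated in part (2), on two counts: the pure infiniteness and simplicity of $A_\omega$ is not elementary and is really an appeal to Kirchberg's structure theory for Kirchberg algebras, and the outerness of $\alpha_\omega$ needs the version of Kishimoto's lemma producing an almost-central positive $h$ with $\|hf\alpha_g(h)\|$ small — the weaker statement with only $\|h\alpha_g(h)\|$ small does not suffice, as it can hold for inner automorphisms. Once these two inputs are available the rest is routine ultrafilter bookkeeping; and in fact the argument of the previous paragraph, run with $A^\omega$ in place of $A_\omega$, also proves (1) whenever $A$ has enough almost-central elements, the properly-outer argument having the advantage of requiring no such hypothesis on $A$.
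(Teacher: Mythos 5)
Your overall strategy is close in spirit to the paper's, which also treats (1) by hand and disposes of (2) by citation, but one step as written is wrong and one key input is misattributed.

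The wrong step is in your structural argument for (1): the principle ``a simple unital $C^*$-algebra containing an infinite projection is purely infinite'' is not a theorem --- R{\o}rdam constructed a unital simple separable nuclear $C^*$-algebra containing both an infinite and a nonzero finite projection, and such an algebra is not purely infinite. Fortunately your preceding step already establishes the right criterion: producing, for every nonzero $x\in A^\omega$, an element $r\in A^\omega$ with $r^*x^*xr=1$ (the $r_n$ can indeed be taken bounded, via a spectral cutoff of the $y_n$ at half of $\lim_{n\to\omega}\|y_n\|$) is precisely Cuntz's characterization of purely infinite simple unital algebras, so conclude by quoting that characterization instead, or equivalently push a projection equivalent to $1$ into every nonzero hereditary subalgebra of $A^\omega$ by the same termwise argument. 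With that repair, your proof of outerness of $\alpha^\omega$ via proper outerness ($\|\alpha_g-\Ad u\|=2$ for every unitary $u$, valid for outer automorphisms of simple algebras by Kishimoto/Olesen--Pedersen) is correct and genuinely different from the paper's: the paper instead uses pure infiniteness to find nonzero projections $p_n$ and scalars $c_n$ with $p_nu_np_n\approx c_np_n$, applies Kishimoto's Lemma 1.1 inside the corners $p_nAp_n$, and reaches the contradiction $0=a\theta^\omega(a)=a^2u^*\neq 0$. Your variant needs no pure infiniteness for this part, but the norm-$2$ fact you quote is itself a theorem deduced from that same lemma of Kishimoto, so the two routes are of comparable depth.

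For (2) you and the paper are essentially in the same place: the paper cites Kirchberg--Phillips (Proposition 3.4 of \cite{KP}) for $A_\omega$ being purely infinite and simple, and Nakamura's Lemma 2 of \cite{N} for outerness of $\alpha_\omega$. The almost-central orthogonalization lemma you invoke (positive contraction $h$ with $\|h\|=1$, $\|[h,f]\|<\epsilon$, $\|hf\alpha_g(h)\|<\epsilon$ for $f$ in a finite set) is not Kishimoto's Lemma 1.1, which carries no centrality requirement; for purely infinite simple nuclear algebras it is in substance Nakamura's lemma, and proving it is the real content of part (2), so it should be cited as such rather than treated as a routine ``Kishimoto's lemma''. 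Granted that input, your ultrafilter bookkeeping is correct (the identity you get is $H^2V=0$ rather than $H^2V^2=0$, but either way $H=0$, contradicting $\|H\|=1$), and your remark that the non-central version of the lemma cannot suffice is exactly the right point.
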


\begin{proof} (1) It is easy to show that $A^\omega$ is purely infinite and simple, 
and so it suffices to show that if $\theta\in \Aut(A)$ is outer, so is $\theta^\omega\in \Aut(A^\omega)$ induced by $\theta$. 
Assume that $\theta$ is outer and $\theta^\omega$ is inner. 
Then there exists $u=(u_n)\in U(A^\omega)$ satisfying $\Ad u=\theta^\omega$. 
We my assume that $u_n$ is a unitary for all $n\in \N$. 
Since $A$ is purely infinite, there exist a sequence of nonzero projections 
$\{p_n\}_{n=1}^\infty$ in $A$ and a sequence of complex numbers $\{c_n\}_{n=1}^\infty$ with $|c_n|=1$ 
such that $\{p_nu_np_n-c_np_n\}_{n=1}^\infty$ converges to 0. 
By replacing $u_n$ with $\overline{c_n}u_n$ if necessary. we may assume $c_n=1$. 
Since $\theta$ is outer, Kishimoto's result \cite[Lemma 1.1]{K81} shows that there exists a sequence of positive elements 
$a_n\in p_nAp_n$ with $\|a_n\|=1$ such that $\{a_n\theta(a_n)\}_{n=1}^\infty$ converging to 0. 
This is contradiction. 
Indeed, let $a=(a_n)\in A^\omega$, $p=(p_n)\in A^\omega$. 
On one hand we have $a\theta^\omega(a)=0$, and on the other hand we have the following 
$$a\theta^\omega(a)=auau^*=apupau^*=apau^*=a^2u^*\neq 0.$$
This shows that $\theta^\omega$ is outer. 

(2) The statement follows from \cite[Proposition 3.4]{KP} and \cite[Lemma 2]{N}. 
\end{proof}

Now we state two results, which are equivariant versions of well-known results in the classification theory of 
nuclear $C^*$-algebras. 
We omit their proofs, which are verbatim modifications of the original ones. 
The first one is an equivariant version of \cite[Corollary 2.3.4]{R01}.

\begin{theorem}\label{GE} Let $G$ be a finite group, and let $(A,\alpha)$ and $(B,\beta)$ be unital separable $G$-$C^*$-algebras. 
If there exist $\varphi\in \Hom_G(A,B)$ and $\psi\in \Hom_G(B,A)$ such that $\psi\circ\varphi$ is $G$-approximately 
unitarily equivalent to $\id_{(A,\alpha)}$ and $\varphi\circ \psi$ is $G$-approximately unitarily equivalent to $\id_{(B,\beta)}$, 
then the two actions $\alpha$ and $\beta$ are conjugate.  
\end{theorem}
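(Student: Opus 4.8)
The plan is to follow the classical Elliott intertwining argument, adapted to the equivariant setting. The hypothesis gives us $\varphi\in\Hom_G(A,B)$ and $\psi\in\Hom_G(B,A)$ together with sequences of unitaries $\{u_n\}\subset M(A)^G$ and $\{v_n\}\subset M(B)^G$ witnessing that $\Ad u_n\circ(\psi\circ\varphi)\to\id_A$ and $\Ad v_n\circ(\varphi\circ\psi)\to\id_B$ pointwise in norm. The goal is to build an honest $G$-isomorphism from $(A,\alpha)$ to $(B,\beta)$ by a back-and-forth construction, passing to a subsequence at each stage so that the partial compositions converge on exhausting finite sets.

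First I would fix increasing finite sets $F_1\subset F_2\subset\cdots$ with dense union in $A$ and $E_1\subset E_2\subset\cdots$ with dense union in $B$, together with a summable sequence of tolerances $\varepsilon_n>0$. Then I would inductively choose unitaries: start with $\varphi_1=\varphi$, pick a unitary in $M(A)^G$ to correct $\psi\circ\varphi_1$ to within $\varepsilon_1$ of $\id_A$ on $F_1$ (call the corrected map $\psi_1$, which is $\Ad(\text{that unitary})\circ\psi$), then pick a unitary in $M(B)^G$ to correct $\varphi\circ\psi_1$ to within $\varepsilon_2$ of $\id_B$ on $E_1$ (giving $\varphi_2=\Ad(\cdot)\circ\varphi$), and continue alternately. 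The key point is that because all the correcting unitaries are $G$-invariant, every $\varphi_n$ and $\psi_n$ remains a $G$-homomorphism. One then checks in the standard way that $\{\varphi_n(x)\}$ is Cauchy for each $x$ in the dense union and likewise $\{\psi_n(y)\}$, so the limits $\varphi_\infty:A\to B$ and $\psi_\infty:B\to A$ exist, are $G$-homomorphisms (the intertwining property passes to the limit since each $\varphi_n$ intertwines $\alpha$ and $\beta$), and the telescoping estimates force $\psi_\infty\circ\varphi_\infty=\id_A$ and $\varphi_\infty\circ\psi_\infty=\id_B$. Hence $\varphi_\infty$ is an invertible element of $\Hom_G(A,B)$, so $\alpha$ and $\beta$ are conjugate.

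The only genuinely non-routine point, and the reason the theorem is stated as ``verbatim modification'' rather than a literal citation, is the existence at each step of the correcting unitary \emph{inside} $M(A)^G$ (resp.\ $M(B)^G$) rather than merely in $M(A)$. This is exactly what the $G$-approximate unitary equivalence hypothesis supplies: by definition the witnessing unitaries lie in the fixed-point subalgebra of the multiplier algebra, so the approximation used at stage $n$ can always be taken $G$-equivariant. Everything else—the bookkeeping of finite sets, the summability of errors, the passage to the limit—is identical to the proof of \cite[Corollary 2.3.4]{R01}, with the single systematic change that ``unitary in $M(\cdot)$'' is replaced throughout by ``unitary in $M(\cdot)^G$'' and ``approximately unitarily equivalent'' by ``$G$-approximately unitarily equivalent''. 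For this reason we omit the detailed computation.
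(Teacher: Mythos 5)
Your proposal is correct and takes exactly the route the paper intends: the paper omits the proof of Theorem \ref{GE}, describing it as a verbatim modification of \cite[Corollary 2.3.4]{R01}, i.e.\ the Elliott approximate intertwining with all correcting unitaries taken in the fixed-point algebras $M(A)^G$ and $M(B)^G$, which is precisely what you carry out. The only point that needs a word (and which you correctly flag) is that conjugating by invariant unitaries preserves $G$-approximate unitary equivalence to the identity --- e.g.\ $\varphi\circ\Ad u_1\circ\psi=\Ad \varphi(u_1)\circ\varphi\circ\psi$ with $\varphi(u_1)\in M(B)^G$ --- so the required $G$-invariant correcting unitary exists at every stage of the induction.
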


The following result is an equivariant version of \cite[Proposition 3.13]{KP}. 

\begin{theorem}\label{GKP} Let $G$ be a finite group, and let $(A,\alpha)$, $(B,\beta)$ be unital separable $G$-$C^*$ algebras. 
We regard the minimal tensor product $B\otimes B$ as a $G$-$C^*$-algebra with the diagonal action $\alpha\otimes \alpha$, 
and define $\rho_l,\rho_r\in \Hom_G(B,B\otimes B)$ by $\rho_l(x)=x\otimes 1$ and $\rho_r(x)=1\otimes x$ for $x\in B$. 
We assume that $\rho_l$ and $\rho_r$ are $G$-approximately unitarily equivalent. 
Then if there exists a unital homomorphism in $\Hom_G(B,A_\omega)$ with $\omega\in \beta \N\setminus \N$, 
the two $G$-actions $\alpha$ on $A$ and $\alpha\otimes \beta$ on $A\otimes B$ are conjugate.   
\end{theorem}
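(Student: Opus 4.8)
The plan is to deduce the conjugacy of $\alpha$ and $\alpha\otimes\beta$ from Theorem \ref{GE}, applied to $(A,\alpha)$ and $(A\otimes B,\alpha\otimes\beta)$; so it is enough to produce $\varphi\in\Hom_G(A,A\otimes B)$ and $\psi\in\Hom_G(A\otimes B,A)$ that are mutual $G$-approximate inverses. For $\varphi$ I would take the unital $G$-homomorphism $a\mapsto a\otimes 1$. All the work lies in manufacturing $\psi$ together with the two approximate-identity conditions, and, as in \cite[Proposition 3.13]{KP}, this is done by first building a homomorphism into a sequence algebra and then running a two-sided approximate intertwining.

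Let $\theta\in\Hom_G(B,A_\omega)$ be the given unital $G$-homomorphism. Since $\theta(B)\subseteq A_\omega=A^\omega\cap A'$, the canonical embedding $A\hookrightarrow A^\omega$ and $\theta$ have commuting ranges, hence assemble into a unital $G$-homomorphism $\Psi\colon A\otimes B\to A^\omega$ with $\Psi(a\otimes b)=a\,\theta(b)$. (As in \cite{KP}, one uses nuclearity of $A$ or of $B$ so that this product of commuting homomorphisms descends to the minimal tensor product; this holds in every application in the paper.) Then $\Psi\circ\varphi$ is the canonical embedding of $A$ into $A^\omega$, which is the easy half of the intertwining. For the other half, the two unital $G$-homomorphisms $B\to(A\otimes B)^\omega$ given by $b\mapsto\theta(b)\otimes 1$ (that is, $\varphi^\omega\circ\theta$) and $b\mapsto 1\otimes b$ have commuting ranges and both commute with $A\otimes 1$, so they assemble into a unital $G$-homomorphism $\Phi\colon B\otimes B\to(A\otimes B)^\omega$ with $\Phi\circ\rho_l(b)=\theta(b)\otimes 1$ and $\Phi\circ\rho_r(b)=1\otimes b$, and with $\Phi(B\otimes B)$ commuting with $A\otimes 1$. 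This is the one place the approximately inner half-flip is used: the hypothesis that $\rho_l$ and $\rho_r$ are $G$-approximately unitarily equivalent gives $G$-invariant unitaries $u_n$ with $\Ad u_n\circ\rho_l\to\rho_r$ pointwise, and pushing them through $\Phi$ yields $G$-invariant unitaries $\Phi(u_n)\in(A\otimes B)^\omega$ commuting with $A\otimes 1$ and satisfying $\|\Phi(u_n)(\theta(b)\otimes 1)\Phi(u_n)^*-(1\otimes b)\|\to 0$ for every $b\in B$. Since $\varphi^\omega\circ\Psi(a\otimes b)=(a\otimes 1)(\theta(b)\otimes 1)$ and $\Phi(u_n)$ commutes with $a\otimes 1$, this shows that $\varphi^\omega\circ\Psi$ is $G$-approximately unitarily equivalent to the canonical embedding of $A\otimes B$ into $(A\otimes B)^\omega$.

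Now $\varphi$ and $\Psi$, with $\Psi\circ\varphi$ the embedding of $A$ and $\varphi^\omega\circ\Psi$ $G$-approximately unitarily equivalent to the embedding of $A\otimes B$, are exactly the input of the Elliott-type intertwining underlying \cite[Proposition 3.13]{KP}. Lift $\Psi$ to a sequence of $G$-equivariant completely positive contractions $A\otimes B\to A$ that are asymptotically multiplicative (averaging over $G$ to make them genuinely $G$-equivariant), and represent each $\Phi(u_n)$ by a sequence of genuinely $G$-invariant unitaries of $A\otimes B$ (using $((A\otimes B)^\omega)^G=((A\otimes B)^G)^\omega$, valid since $G$ is finite, together with a polar-decomposition perturbation to restore unitarity). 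Carrying out the usual back-and-forth along increasing finite subsets of $A$ and of $A\otimes B$, correcting at each stage only by $G$-invariant unitaries drawn from the above, one obtains honest $G$-homomorphisms $\psi\in\Hom_G(A\otimes B,A)$ and $\varphi'\in\Hom_G(A,A\otimes B)$ (a point-norm limit of $G$-conjugates of $\varphi$) with $\psi\circ\varphi'$ and $\varphi'\circ\psi$ $G$-approximately unitarily equivalent to $\id_A$ and $\id_{A\otimes B}$ respectively. Theorem \ref{GE} then yields that $\alpha$ and $\alpha\otimes\beta$ are conjugate.

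I expect the main obstacle to be this last step. The first two steps are essentially formal once one observes that the half-flip serves only to identify the two copies $\theta(B)\otimes 1$ and $1\otimes B$ of $B$ inside $(A\otimes B)^\omega$. The intertwining, however, demands simultaneous control, on growing finite sets, of the approximate multiplicativity of the lifts of $\Psi$, of the closeness of successive homomorphisms, and of the $G$-invariance of every correcting unitary; it is precisely here that the finiteness of $G$ is genuinely used — through averaging and through the compatibility of fixed-point algebras with the ultrapower construction — and it is this bookkeeping that must be checked in transcribing \cite[Proposition 3.13]{KP} into the equivariant setting.
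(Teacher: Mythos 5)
Your first two steps are exactly the equivariant transcription of \cite[Proposition 3.13]{KP} that the paper intends: the homomorphism $\Psi\colon A\otimes B\to A^\omega$, $a\otimes b\mapsto a\theta(b)$, with $\Psi\circ\varphi=\iota_A$, and the use of the $G$-approximately inner half flip through $\Phi\colon B\otimes B\to (A\otimes B)^\omega$ to get $G$-invariant unitaries commuting with $A\otimes 1$ that carry $\theta(b)\otimes 1$ to $1\otimes b$. (Your parenthetical worry about the minimal tensor product is harmless: an approximately inner half flip already forces $B$ to be simple and nuclear, so the commuting pair does factor through $A\otimes B$.)

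The gap is in your last step. You propose to lift $\Psi$ to asymptotically multiplicative, $G$-equivariant completely positive contractions $A\otimes B\to A$ and then run a two-sided intertwining feeding Theorem \ref{GE}. Such lifts are not available in the stated generality: the Choi--Effros lifting theorem applies to nuclear maps, $A$ is not assumed nuclear in Theorem \ref{GKP}, and the naive formula $a\otimes b\mapsto a\theta_n(b)$ (with $\theta_n$ c.p.\ lifts of $\theta$) does not even define a completely positive map, since $a$ and $\theta_n(b)$ only asymptotically commute. The intended argument avoids lifting altogether and needs no backward homomorphism: represent each $\Phi(u_n)$ by genuinely $G$-invariant unitaries $w\in (A\otimes B)^G$ (using $((A\otimes B)^\omega)^G=((A\otimes B)^G)^\omega$ for finite $G$). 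These almost commute with $\varphi(A)=A\otimes 1$ on any prescribed finite set, and since $w(1\otimes b)w^*$ is close to $\theta(b)\otimes 1$, whose representing sequence lies in $A\otimes 1$, they conjugate any finite subset of $A\otimes B$ arbitrarily close to $\varphi(A)$. This is precisely the hypothesis of Elliott's one-sided approximate intertwining (see \cite[Section 2.3]{R01}, as used in the proof of \cite[Proposition 3.13]{KP}); its equivariant version, with all correcting unitaries taken in fixed-point algebras so that the resulting point-norm limit of conjugates of $\varphi$ is automatically equivariant, yields directly that $\varphi$ is $G$-approximately unitarily equivalent to an isomorphism of $(A,\alpha)$ onto $(A\otimes B,\alpha\otimes\beta)$, i.e.\ the conjugacy. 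Your route does go through if one adds the hypothesis that $A$ is nuclear (which covers the paper's only application, Theorem \ref{splitting}), but as written it does not prove the theorem as stated.
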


\section{Equivariant R{\o}rdam's theorem}

The purpose of this section is to show the following theorem, which is an equivariant version of R{\o}rdam's theorem 
\cite[Theorem 3.6]{R93},\cite[Theorem 5.1.2]{R01}. 

\begin{theorem} \label{GR} Let $G$ be a finite group, let $\alpha$ be a quasi-free action of $G$ on $\cO_n$ with finite $n$, 
and let $(B,\beta)$ be a $G$-$C^*$-algebra. 
We assume that $B$ is unital, purely infinite, and simple, and $\beta$ is outer. 
For two unital $G$-homomorphisms $\varphi,\psi\in \Hom_G(\cO_n,B)$, we set 
$$u_{\psi,\varphi}=\sum_{i=1}^n\psi(s_i)\varphi(s_i)^*\in U(B^G).$$ 
We introduce an endomorphism $\Lambda_\varphi \in \End(B^G)$ by 
$$\Lambda_\varphi(x)=\sum_{i=1}^n\varphi(s_i)x\varphi(s_i^*),\quad x\in B^G.$$
Then the following conditions are equivalent. 
\begin{itemize}
\item [(1)] The $G$-homomorphisms $\varphi$ and $\psi$ are $G$-approximately unitarily equivalent.   
\item [(2)] The unitary $u_{\psi,\varphi}$ belongs to the closure of $\{v\Lambda_\varphi(v^*)\in U(B^G)|\;v\in B^G\}$. 
\item [(3)] The $K_1$-class $[u_{\psi,\varphi}]\in K_1(B^G)$ is in the image of $1-K_1(\Lambda_\varphi)$. 
\item [(4)] The $K_1$-class $K_1(j_\beta)([u_{\psi,\varphi}])\in K_1(B\rtimes_\beta G)$ is in the image of $1-K_1(\hat{\beta}_{\pi_\alpha})$.
\item [(5)] The equality $KK_G(\varphi)=KK_G(\psi)$ holds in $KK_G(\cO_n,B)$. 
\end{itemize}
\end{theorem}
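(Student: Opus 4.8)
The plan is to establish the cycle of implications $(1)\Rightarrow(5)\Rightarrow(4)\Rightarrow(3)\Rightarrow(2)\Rightarrow(1)$, following the structure of R{\o}rdam's original argument while carrying the $G$-action through each step. The implication $(1)\Rightarrow(5)$ is formal: $G$-approximately unitarily equivalent $G$-homomorphisms induce the same class in $KK_G(\cO_n,B)$, since a $G$-unitary in $M(B)^G$ gives a homotopy of Kasparov cycles and the class is continuous in the appropriate topology. For $(2)\Rightarrow(1)$, one uses the standard intertwining trick adapted equivariantly: if $u_{\psi,\varphi}=\lim_k v_k\Lambda_\varphi(v_k^*)$ with $v_k\in U(B^G)$, then setting $w_k=v_k^*v_{k-1}\cdots$ (a suitable telescoping product of $G$-invariant unitaries) one checks that $\Ad w_k\circ\varphi\to\psi$ in norm on the generators $s_i$, hence on all of $\cO_n$; the point is that each $w_k$ lies in $U(B^G)\subseteq U(M(B)^G)$. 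The implication $(3)\Rightarrow(2)$ is where the purely infinite simple structure of $B^G$ is used: since $B$ is unital, purely infinite, simple with $\beta$ outer, $B^G$ is purely infinite and simple (as recalled in the Preliminaries), so $U(B^G)/U(B^G)_0\cong K_1(B^G)$ and the commutator subgroup arguments of R{\o}rdam apply to identify the closure of $\{v\Lambda_\varphi(v^*)\}$ with the preimage under $K_1$ of the image of $1-K_1(\Lambda_\varphi)$; one must verify that $\Lambda_\varphi$ restricted to $B^G$ is the relevant endomorphism and that $u_{\psi,\varphi}$ represents the correct obstruction class.

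The implication $(3)\Leftrightarrow(4)$ is a $K$-theory bookkeeping step. Since $B$ is purely infinite simple and $\beta$ is outer, $K_*(j_\beta):K_*(B^G)\to K_*(B\rtimes_\beta G)$ is an isomorphism (recalled in the Preliminaries), so it suffices to check that under this isomorphism the endomorphism $1-K_1(\Lambda_\varphi)$ corresponds to $1-K_1(\hat\beta_{\pi_\alpha})$. This requires unwinding the definition of the dual coaction $\hat\beta_{\pi_\alpha}$ and comparing it with the map induced by $\Lambda_\varphi$; the key identity is that conjugating by $e_\beta$ and using $\varphi(s_i)e_\beta = e_\beta\varphi(s_i)e_\beta$ (valid since the $\varphi(s_i)$ are $G$-invariant, wait — they are not, rather $\varphi$ intertwines $\alpha$ and $\beta$, so $\beta_g(\varphi(s_i))=\varphi(\alpha_g(s_i))$ and the $\cH_n$-invariance of $\alpha$ enters here) relates $\Lambda_\varphi$ to the action of the representation ring element $[\pi_\alpha]\in\Z\hat G$ on $K_*(B\rtimes_\beta G)$. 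I would spell this out by tracking how $\hat\beta_{\pi_\alpha}$ acts via $K_*(\hat\beta_{\pi_\alpha})$ and matching it with $K_*(\Lambda_\varphi)$ transported along $K_*(j_\beta)$.

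The main obstacle is the implication $(5)\Rightarrow(4)$ — extracting a concrete $K_1$-statement about $u_{\psi,\varphi}$ from the abstract equality $KK_G(\varphi)=KK_G(\psi)$. In R{\o}rdam's non-equivariant proof this is the heart of the matter: one uses that $\cO_n$ has a natural generator in $KK$-theory and that $KK(\cO_n,B)$ is computed from $K_*(B)$ together with the action of $1-[\pi_\alpha]$, so the equality of $KK$-classes is precisely the assertion that $[u_{\psi,\varphi}]$ (the difference class) lies in the image of $1-K_1(\Lambda_\varphi)$. Equivariantly, one needs the analogous computation of $KK_G(\cO_n,B)$: I would use that $(\cO_n,\alpha)$ sits in a short exact sequence relating it to its core, or directly that $KK_G(\cO_n,B)$ is identified via a Pimsner-type six-term sequence with $\Coker(1-\hat\beta_{\pi_\alpha})$ on $K_0$ and $\Ker(1-\hat\beta_{\pi_\alpha})$ on $K_1$, applied to $K_*(B\rtimes_\beta G)$ under the Baaj–Skandalis / Green–Julg type duality $KK_G(\cO_n,B)\cong KK(\cO_n\rtimes_\alpha G, B\rtimes_\beta G)$. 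Establishing this identification carefully, and checking that the distinguished class picked out by a pair $(\varphi,\psi)$ is exactly $K_1(j_\beta)([u_{\psi,\varphi}])$ modulo the image of $1-K_1(\hat\beta_{\pi_\alpha})$, is the technical core; the rest is then a matter of assembling the implications above.
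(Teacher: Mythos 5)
Your outline reproduces the easy parts of the paper's argument, but it has a genuine gap at the step $(3)\Rightarrow(2)$. You assert that, because $B^G$ is purely infinite and simple, ``$U(B^G)/U(B^G)_0\cong K_1(B^G)$ and the commutator subgroup arguments of R{\o}rdam apply'' to identify the closure of $\{v\Lambda_\varphi(v^*)\}$ with the unitaries whose class lies in the image of $1-K_1(\Lambda_\varphi)$. That identification is precisely the technical heart of \cite[Theorem 3.6]{R93}, and R{\o}rdam's proof of it is not a formal commutator argument: it uses the generating isometries and a Rohlin-type property of the shift endomorphism on the UHF core $\bigotimes M_n(\C)$ of $\cO_n$. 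Inside $B^G$ this machinery is not available verbatim, because the isometries $\varphi(s_i)$ do not lie in $B^G$ (they are permuted by $\beta$ via $\pi_\alpha$), so $\Lambda_\varphi|_{B^G}$ is not implemented by isometries of $B^G$ and one cannot simply transplant R{\o}rdam's proof to the fixed-point algebra. The missing ingredient, which the paper supplies, is that the shift automorphism of $(\bigotimes_\Z M_n(\C))^G$, for the product action $\bigotimes_\Z\Ad\pi_\alpha(g)$, has the Rohlin property; this is Kishimoto's theorem \cite[Theorem 2.1]{K98} (carried out equivariantly in \cite[Lemma 5.5]{IDuke}). Without this input, or some substitute for it, your step $(3)\Rightarrow(2)$ does not go through.

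Two further remarks. First, $(1)\Leftrightarrow(2)$ needs no telescoping product: the identities $\psi(s_i)=u_{\psi,\varphi}\varphi(s_i)$ and $v\varphi(s_i)v^*=v\Lambda_\varphi(v^*)\varphi(s_i)$ give both directions directly, with a single $G$-invariant unitary per approximation. Second, for $(3)\Leftrightarrow(4)$ and $(4)\Leftrightarrow(5)$ your plan coincides with the paper's (the intertwining relation $K_*(j_\beta)\circ K_*(\Lambda_\varphi)=K_*(\hbeta_{\pi_\alpha})\circ K_*(j_\beta)$, then the Pimsner six-term sequence combined with the Green--Julg isomorphism), but the step you yourself call the technical core --- that the class attached to the pair $(\varphi,\psi)$ under this identification is exactly $K_1(j_\beta)([u_{\psi,\varphi}])$, i.e.\ the boundary formula $\delta''(K_1(j_\beta)([u_{\psi,\varphi}]))=KK_G(\psi)-KK_G(\varphi)$ of Theorem \ref{boundary} --- is left entirely unproven. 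In the paper this requires a lengthy explicit Kasparov-product computation (an equivariant completely positive lifting, its Stinespring dilation, and an explicit quasi-homomorphism), so as it stands your proposal names the right target there but does not constitute a proof of it.
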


\begin{proof} 
The equivalence of (1) and (2) follows from $\psi(s_i)=u_{\psi,\varphi}\varphi(s_i)$ and 
$v\varphi(s_i)v^*=v\Lambda_\varphi(v^*)\varphi(s_i)$. 

The implication from (2) to (3) is trivial. 
In view of the proof of \cite[Theorem 3.6]{R93}, the implication from (3) to (2) is reduced to the Rohlin property of 
the shift automorphism of $(\bigotimes_\Z M_n(\C))^G$, where the $G$-action of the UHF algebra $\bigotimes_\Z M_n(\C)$ 
is the product action $\bigotimes_\Z\Ad \pi_\alpha(g)$. 
This follows from Kishimoto's result \cite[Theorem 2.1]{K98} (see \cite[Lemma 5.5]{IDuke} for details). 

The equivalence of (3) and (4) follows from Lemma \ref{Kcomputation2} below. 

We will show the equivalence of (4) and (5) in Appendix as it follows from a rather lengthy computation, and we do not really require it 
in the rest of this note. 
\end{proof}

To show the equivalence of (3) and (4), we first recall the following well-known fact. 

\begin{lemma}\label{Kcomputation1} 
Let $A$ be a $C^*$-algebra, and let $\{t_i\}_{i=1}^n\subset M(A)$ be isometries with mutually 
orthogonal ranges. 
Let $\{e_{ij}\}_{i,j=1}^n$ be the system of matrix units of the matrix algebra $M_n(\C)$. 
We define two homomorphisms 
$\rho_1:A\rightarrow A\otimes M_n(\C)$ and $\rho_2:A\otimes M_n(\C)\rightarrow A$ by 
$\rho_1(a)=a\otimes e_{11}$, and $\rho_2(a\otimes e_{ij})=t_iat_j^*$. 
Then $K_*(\rho_2)$ is the inverse of $K_*(\rho_1)$.  
\end{lemma}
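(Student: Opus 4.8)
The plan is to realise each of the two composites $\rho_2\circ\rho_1$ and $\rho_1\circ\rho_2$ as conjugation by an isometry sitting in the appropriate multiplier algebra, and then to invoke the standard fact that conjugation by a multiplier isometry induces the identity on $K$-theory. First I would record that $\rho_2$ is a well-defined $*$-homomorphism: on the algebraic tensor product the assignment $a\otimes e_{ij}\mapsto t_iat_j^*$ is multiplicative and $*$-preserving precisely because the $t_i$ have orthogonal ranges, i.e.\ $t_j^*t_k=\delta_{jk}1$, and it extends to $A\otimes M_n(\C)$ by nuclearity of $M_n(\C)$. This is routine and I would not dwell on it.

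Next I would compute the composites. For $a\in A$ one has $\rho_2\circ\rho_1(a)=\rho_2(a\otimes e_{11})=t_1at_1^*=\Ad t_1(a)$, with $t_1$ an isometry in $M(A)$. For the other order, set $T=\sum_{i=1}^n t_i\otimes e_{1i}$, an element of $M(A\otimes M_n(\C))$; using $t_i^*t_j=\delta_{ij}1$ one checks $T^*T=\sum_{i,j}t_i^*t_j\otimes e_{ij}=1\otimes1$, so $T$ is an isometry, and a direct computation gives $T(a\otimes e_{kl})T^*=t_kat_l^*\otimes e_{11}=\rho_1\circ\rho_2(a\otimes e_{kl})$ for all $k,l$, hence $\rho_1\circ\rho_2=\Ad T$. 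These are bookkeeping verifications.

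Finally I would appeal to the fact that for an isometry $v$ in the multiplier algebra of a $C^*$-algebra, $K_*(\Ad v)=\id$; on $K_0$ this follows from the Murray--von Neumann equivalence of $p$ and $vpv^*$ via the partial isometry $vp$, repackaged through the $2\times2$ unitary $W=\left(\begin{smallmatrix}v&1-vv^*\\0&v^*\end{smallmatrix}\right)$ in $M_2$ of the multiplier algebra, which satisfies $W(a\oplus0)W^*=vav^*\oplus0$; the $K_1$ statement is analogous. Applying this to $T$ and to $t_1$ yields $K_*(\rho_1)\circ K_*(\rho_2)=\id$ and $K_*(\rho_2)\circ K_*(\rho_1)=\id$, so $K_*(\rho_2)=K_*(\rho_1)^{-1}$. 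The only genuinely non-formal input is this last fact about multiplier isometries, so that is where I would locate the ``main obstacle'', although it is entirely standard; indeed one may instead simply quote that the corner embedding $\rho_1$ is a $K$-equivalence by stability of $K$-theory, in which case only the identity $\rho_2\circ\rho_1=\Ad t_1$ together with $K_*(\Ad t_1)=\id$ is needed to conclude.
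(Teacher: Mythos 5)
Your proposal is correct and is essentially the paper's argument: the paper simply observes that $K_*(\rho_1)$ is an isomorphism (corner embedding, stability of $K$-theory) and that $\rho_2\circ\rho_1=\Ad t_1$ induces the identity on $K_*(A)$ by the standard fact about conjugation by a multiplier isometry, which is precisely the shortcut you note in your last sentence. Your extra verification that $\rho_1\circ\rho_2=\Ad T$ with $T=\sum_{i}t_i\otimes e_{1i}$ is a correct, slightly more self-contained variant of the same idea, not a genuinely different route.
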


\begin{proof} Since $K_*(\rho_1)$ is an isomorphism, it suffices to show that the homomorphism 
$\rho_2\circ \rho_1(x)=t_1xt_1^*$ induces the identity on $K_*(A)$. 
This follows from a standard argument. 
\end{proof} 

Recall that we regard $K_*(\hbeta_{\pi_\alpha})$ as an element of $\End(K_*(B\rtimes_\beta G))$ by 
identifying $K_*(B\rtimes_\beta G)$ with $K_*((B\rtimes_\beta G)\otimes B(\cH_n))$.

\begin{lemma}\label{Kcomputation2} With the above notation, we have the equality $K_*(j_\beta)\circ K_*(\Lambda_\varphi)
=K_*(\hbeta_{\pi_\alpha})\circ K_*(j_\beta)$. 
\end{lemma}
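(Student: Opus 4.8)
The plan is to trace the relevant $K$-theory classes through the two composite maps explicitly on generators. Both sides of the claimed equality are homomorphisms $K_*(B^G)\to K_*(B\rtimes_\beta G)$, and the natural strategy is to identify $K_*(B^G)$ with $K_*(B\rtimes_\beta G)$ via the isomorphism $K_*(j_\beta)$ coming from outerness (as recalled in the Preliminaries), and then show that under this identification $\Lambda_\varphi$ corresponds to $\hat\beta_{\pi_\alpha}$. Concretely, I would take $x\in B^G$ (more precisely a projection or unitary representing a class, but it is cleaner to argue at the level of the homomorphisms themselves) and compute $j_\beta(\Lambda_\varphi(x))=\sum_{i=1}^n\varphi(s_i)\,x\,\varphi(s_i)^*\,e_\beta$, while on the other side $\hat\beta_{\pi_\alpha}(j_\beta(x))=\hat\beta_{\pi_\alpha}(x e_\beta)$ should be expanded using $\hat\beta_{\pi_\alpha}(x)=x\otimes 1$ and $\hat\beta_{\pi_\alpha}(\lambda^\beta_g)=\lambda^\beta_g\otimes \pi_\alpha(g)$, so that $\hat\beta_{\pi_\alpha}(e_\beta)=\frac1{\#G}\sum_g \lambda^\beta_g\otimes\pi_\alpha(g)$.

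The key computational input is that the isometries $\varphi(s_i)\in B^G$, together with the $G$-equivariance $\beta_g(\varphi(s_i))=\sum_j \pi_\alpha(g)_{ji}\varphi(s_j)$ (which is exactly what it means for $\varphi$ to be a $G$-homomorphism out of the quasi-free $(\cO_n,\alpha)$), allow one to write $\hat\beta_{\pi_\alpha}$ after conjugation by a suitable unitary in $(B\rtimes_\beta G)\otimes B(\cH_n)$ built from the $\varphi(s_i)$ in a form that manifestly matches $\Lambda_\varphi$. The cleanest route is to use Lemma \ref{Kcomputation1}: the isometries $t_i:=\varphi(s_i)e_\beta$ — or rather appropriate isometries in $M(B\rtimes_\beta G)$ — have mutually orthogonal ranges, so the associated $\rho_2$ inverts $\rho_1$ on $K$-theory, and $\Lambda_\varphi$ is (up to the identification $j_\beta$) implemented by exactly this $\rho_2\circ(\text{stabilization})$. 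One then checks that $\hat\beta_{\pi_\alpha}$, post-composed with the identification $K_*((B\rtimes_\beta G)\otimes B(\cH_n))\cong K_*(B\rtimes_\beta G)$, is the same stabilization-then-$\rho_2$ map, because the matrix coefficients of $\pi_\alpha$ governing $\hat\beta_{\pi_\alpha}$ are precisely the same ones governing how $\beta_g$ permutes the $\varphi(s_i)$.

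The main obstacle I expect is bookkeeping: carefully setting up the correct isometries in the multiplier algebra $M(B\rtimes_\beta G)$ so that Lemma \ref{Kcomputation1} applies verbatim, and checking that conjugating $\hat\beta_{\pi_\alpha}$ by the unitary $w=\sum_{i=1}^n \varphi(s_i)e_\beta\otimes e_{i1}+(\text{correction terms})$ genuinely sends $x\otimes e_{11}$ to $\Lambda_\varphi(x)e_\beta$ while leaving the $K$-theory identification intact. In particular one must verify that the element of $(B\rtimes_\beta G)\otimes B(\cH_n)$ obtained by replacing each $\lambda^\beta_g$ inside $e_\beta$ by $\lambda^\beta_g\otimes\pi_\alpha(g)$ is conjugate, via an honest unitary, to $e_\beta\otimes(\text{rank-one projection})$, and that this conjugating unitary is built from $\{\varphi(s_i)\}$; this is where the quasi-freeness of $\alpha$ and the equivariance of $\varphi$ are used in an essential way. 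Once the unitary is in hand the equality of $K_*$-maps is immediate since inner perturbations act trivially on $K$-theory, so the remaining work is purely the verification that the candidate unitary does what is claimed — a finite, if slightly intricate, algebraic manipulation with the relations $\sum_i s_is_i^*=1$, $s_i^*s_j=\delta_{ij}$, and $\lambda^\beta_g x (\lambda^\beta_g)^*=\beta_g(x)$.
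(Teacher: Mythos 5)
Your plan is essentially the paper's proof: the paper takes $t_i=\varphi(s_i)$ (honest isometries already lying in $B\subset B\rtimes_\beta G$, so neither $M(B\rtimes_\beta G)$ nor the elements $\varphi(s_i)e_\beta$ — which are only partial isometries — is needed), lets $\rho(x\otimes s_is_j^*)=\varphi(s_i)x\varphi(s_j)^*$ be the corresponding $\rho_2$ of Lemma \ref{Kcomputation1}, and verifies the exact algebraic identity $\rho\circ\hbeta_{\pi_\alpha}\circ j_\beta=j_\beta\circ\Lambda_\varphi$ using $\pi_\alpha(g)=\sum_i\alpha_g(s_i)s_i^*$ and $\varphi(\alpha_g(s_i))\lambda^\beta_g=\lambda^\beta_g\varphi(s_i)$, which is precisely your observation that the matrix coefficients of $\pi_\alpha$ also govern how $\beta_g$ moves the $\varphi(s_i)$. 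The unitary-conjugation step you anticipate (conjugating $\hbeta_{\pi_\alpha}(e_\beta)$ onto $e_\beta\otimes e_{11}$) is therefore unnecessary — and matching that single projection alone would not give equality of the two homomorphisms on all of $K_*$ anyway — since the homomorphism-level identity combined with Lemma \ref{Kcomputation1} finishes the argument directly.
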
 

\begin{proof} Identifying $B(\cH_n)$ with the linear span of $\{s_is_j^*\}_{i,j=1}^n$ acting on $\cH_n$ by left multiplication, 
we have 
$$\pi_\alpha(g)=\sum_{i=1}^n \alpha_g(s_i)s_i^*.$$  
We define a homomorphism 
$\rho:(B\rtimes_\beta G)\otimes B(\cH_n)\rightarrow  B\rtimes_\beta G$ by 
$\rho(x\otimes s_is_j^*)=\varphi(s_i)x\varphi(s_j)^*$, which plays the role of $\rho_2$ in Lemma \ref{Kcomputation1} 
with $A=B\rtimes_\beta G$ and $t_i=\varphi(s_i)$. 
Then for $x\in B^G$, we have 
\begin{eqnarray*}\lefteqn{
\rho\circ \hbeta_{\pi_\alpha}\circ j_\beta(x) =\frac{1}{\# G}\sum_{g\in G}\rho\circ\hbeta_{\pi_\alpha}(\lambda^\beta_g x) 
=\frac{1}{\# G}\sum_{g\in G}\rho(\lambda^\beta_g x\otimes \pi_\alpha(g))} \\
 &=&\frac{1}{\# G}\sum_{g\in G}\sum_{i=1}^n\rho(\lambda^\beta_g x\otimes \alpha_g(s_i)s_i^*) 
 =\frac{1}{\# G}\sum_{g\in G}\sum_{i=1}^n \varphi(\alpha_g(s_i))\lambda^\beta_g x\varphi(s_i)^* \\
 &=&\frac{1}{\# G}\sum_{g\in G}\sum_{i=1}^n \lambda^\beta_g \varphi(s_i) x\varphi(s_i)^* 
 =j_\beta\circ \Lambda_\varphi(x), 
\end{eqnarray*}
which proves the statement thanks to Lemma \ref{Kcomputation1}. 
\end{proof}

\section{Equivariant Lin-Phillips's theorem}

The purpose of this section is to show the following theorem, which is an equivariant version of 
Lin-Phillips's theorem \cite[Theorem 3.3]{LP}, \cite[Proposition 7.2.5]{R01}.   

\begin{theorem}\label{GLP} Let $G$ be a finite group, let $\alpha$ be a quasi-free action of $G$ on $\cO_\infty$, 
and let $(B,\beta)$ be a unital $G$-$C^*$-algebra. 
We assume that $B$ is purely infinite and simple, and $\beta$ is outer. 
Then any two unital $G$-homomorphisms in $\Hom_G(\cO_\infty,B)$ are $G$-approximately unitarily equivalent. 
\end{theorem}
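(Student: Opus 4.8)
The strategy is to reduce Theorem~\ref{GLP} to part (5) of the equivariant R\o{}rdam theorem (Theorem~\ref{GR}) applied in the limit $n\to\infty$, i.e.\ to show that for any two unital $G$-homomorphisms $\varphi,\psi\in\Hom_G(\cO_\infty,B)$ one has $KK_G(\varphi)=KK_G(\psi)$ in $KK_G(\cO_\infty,B)$, and then to quote the equivalence of (5) and (1) there. The point is that $\cO_\infty$ is $KK_G$-equivalent to $\C$ with the trivial action: indeed $\cO_\infty=\cO_\infty\otimes\C$ and the unital inclusion $\C\hookrightarrow\cO_\infty$ is a $KK_G$-equivalence (this is the equivariant form of the statement, recalled in the introduction, that $(\cO_\infty,\alpha)$ is $KK_G$-equivalent to $(\C,\id)$; it follows from the Cuntz picture of $K$-theory together with outerness of $\alpha$, which is automatic for a faithful quasi-free action but in fact not even needed here since one only compares two homomorphisms out of $\cO_\infty$). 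Consequently $KK_G(\cO_\infty,B)\cong KK_G(\C,B)=R(G)\text{-module } K_0^G(B)$-style group, and any two unital $G$-homomorphisms $\C\to B$ agree there trivially; pulling back along the $KK_G$-equivalence shows $KK_G(\varphi)=KK_G(\psi)$ for any two \emph{unital} $G$-homomorphisms $\cO_\infty\to B$, since both restrict to the same unital $G$-homomorphism $\C\to B$.

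The mild subtlety is that Theorem~\ref{GR} as stated is for $\cO_n$ with \emph{finite} $n$, so I cannot apply it to $\cO_\infty$ directly. I would handle this by an exhaustion/approximation argument. Write $\cH_\alpha=\overline{\operatorname{span}}\{s_i\}_{i=1}^\infty$ with the unitary representation $\pi_\alpha$, decompose it into $G$-invariant finite-dimensional pieces, and for each $m$ let $\cO_\infty^{(m)}\subset\cO_\infty$ be the $G$-invariant copy of $\cO_{n_m}$ (non-unital, without the sum relation, i.e.\ the Cuntz--Toeplitz-type subalgebra, or better: the corner) generated by the first $n_m$ isometries, chosen so that $\bigcup_m\cO_\infty^{(m)}$ is dense in $\cO_\infty$. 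Then $\varphi$ and $\psi$ are determined by their restrictions to these subalgebras, and on each $\cO_\infty^{(m)}$ one can run the finite-$n$ machinery of Section~3 relative to $B$. Concretely, I expect to show: for every finite-dimensional $G$-invariant subspace and every $\varepsilon$, there is a unitary $u\in M(B)^G$ (in fact in $U(B^G)$, $B$ being unital) with $\|\varphi(s_i)-u\psi(s_i)u^*\|<\varepsilon$ for all $i$ in that block. The $K$-theoretic obstruction at stage $m$ lives in a group that, in the inductive limit, is killed by the $KK_G$-triviality above — more precisely, the relevant class $[u_{\psi,\varphi}^{(m)}]$ and its image under $1-K_1(\Lambda_\varphi^{(m)})$ or under $1-K_1(\hat\beta_{\pi_\alpha^{(m)}})$ becomes controllable because, as $m$ grows, the endomorphism $\Lambda_\varphi^{(m)}$ becomes ``more and more proper'' and $1-K_1(\Lambda_\varphi^{(m)})$ surjective in the limit; this is exactly the phenomenon behind Lin--Phillips in the non-equivariant case, where $\cO_\infty$'s absence of the finiteness relation makes the obstruction vanish.

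Assembling these, the argument runs: (i) establish the $KK_G$-equivalence $(\cO_\infty,\alpha)\sim_{KK_G}(\C,\id)$ and deduce $KK_G(\varphi)=KK_G(\psi)$; (ii) for the approximate unitary equivalence itself, proceed block by block, at each finite stage using the equivalence (5)$\Leftrightarrow$(2)$\Leftrightarrow$(3) of Theorem~\ref{GR} applied to the finite subalgebra — here the crucial input is that the $K_1$-obstruction $[u_{\psi,\varphi}]$ restricted to the $m$-th block sits in the image of $1-K_1(\Lambda_\varphi^{(m)})$, which holds once enough isometries are available; (iii) use an Elliott-type intertwining over the blocks to patch the finite-stage unitaries $u_m\in U(B^G)$ into a single sequence witnessing $G$-approximate unitary equivalence, using that $B^G$ is purely infinite and simple (and hence has good stability and the relevant Rohlin property for the shift on $(\bigotimes_\Z M_{n_m}(\C))^G$ already invoked in the proof of Theorem~\ref{GR}) so that successive corrections can be made small.

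**Main obstacle.** The delicate point is step~(ii)--(iii): controlling the $K_1$-level obstruction uniformly across the exhausting sequence $\cO_\infty^{(m)}$ and showing it genuinely dies in the limit, together with the bookkeeping needed to turn a sequence of block-wise approximations into one global $G$-approximate unitary equivalence. This is precisely where the Cuntz-algebra-at-infinity phenomenon is used — one must exploit that $1-K_1(\hat\beta_{\pi_\alpha^{(m)}})$ (equivalently $1-K_1(\Lambda_\varphi^{(m)})$) becomes surjective, or at least hits the relevant classes, as $m\to\infty$ because the representation $\pi_\alpha^{(m)}$ can be chosen to contain arbitrarily many copies of every irreducible of $G$. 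The equivariance forces one to carry the $R(G)$-module ($\Z\hat G$-module) structure through all of this, and to be careful that the unitaries produced by Kishimoto's Rohlin-property argument for $(\bigotimes_\Z M_n(\C))^G$ can indeed be taken $G$-invariant in $M(B)^G$; that is the one place where a genuinely new estimate (as opposed to a verbatim equivariant translation) may be required.
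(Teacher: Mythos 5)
Your plan stalls at exactly the point you flag as the ``main obstacle'', and the mechanism you propose there is not the right one. First, a technical but real issue: the restriction of $\varphi,\psi$ to the subalgebra generated by $s_1,\dots,s_{n_m}$ is a homomorphism of the Cuntz--Toeplitz algebra $\cE_{n_m}$, not of $\cO_{n_m}$; the element $\sum_{i\le n_m}\psi(s_i)\varphi(s_i)^*$ is a proper partial isometry, so conditions (2)--(4) of Theorem \ref{GR} do not even make sense for these restrictions and the theorem cannot be quoted blockwise. One needs a separate statement for $\cE_n$ (in the paper this is the unnumbered lemma before Corollary \ref{G0}, proved by running Lin--Phillips's Proposition 1.7 with Theorem \ref{GR} as input), and its natural hypothesis is $[\varphi(1)]=[\psi(1)]=0$ in $K_0(B^G)$, which guarantees $[\varphi(p_n)]=0$ by the computation of Lemma \ref{Kcomputation2}. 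Second, and more seriously, your claimed reason that the $K_1$-obstruction ``dies in the limit'' --- that $1-K_1(\hat{\beta}_{\pi_\alpha^{(m)}})$ becomes surjective as $m\to\infty$ --- is false: this map is multiplication by $1-[\pi_\alpha^{(m)}]$ in the $\Z\hat{G}$-module $K_1(B\rtimes_\beta G)$, and already for trivial $G$ and $K_1(B)\cong\Z$ multiplication by $1-n_m$ is not surjective for any $n_m\ge 3$; moreover the block-by-block intertwining needs the obstruction to vanish at each finite stage, so there is no limit to appeal to. Finally, step (i) is structurally circular: equality $KK_G(\varphi)=KK_G(\psi)$ is easy (and true), but the implication ``same $KK_G$-class $\Rightarrow$ $G$-approximately unitarily equivalent'' for $\cO_\infty$ is essentially Theorem \ref{GLP} itself; Theorem \ref{GR}(5) is only available for finite $n$, and the paper explicitly never uses the equivalence (4)$\Leftrightarrow$(5) in this proof.

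What actually closes the gap in the paper is a reduction of the unital case to the $K_0$-trivial case, not a limiting argument on the obstruction groups. Having Corollary \ref{G0} (the case $[\varphi(1)]=[\psi(1)]=0$, obtained from the $\cE_n$-lemma and the inductive limit description of $(\cO_\infty,\alpha)$ by Cuntz--Toeplitz systems), the paper passes to $B^\omega$ and uses that $({\cO_\infty}_\omega)^G$ is purely infinite and simple (outerness of $\alpha_\omega$ as an action of $G/\ker\alpha$) to choose projections $q_1,q_2,q_3\in({\cO_\infty}_\omega)^G$ with $q_1+q_2+q_3=1$ and $[1]=[q_1]=[q_2]=-[q_3]$; cutting $\varphi^\omega,\psi^\omega$ by the $q_i$ produces summands whose unit classes can be paired to be $0$ in $K_0((B^\omega)^G)$, so Corollary \ref{G0} applies twice and a cancellation argument yields a $G$-invariant unitary in $B^\omega$ conjugating $\varphi$ to $\psi$, which is exactly $G$-approximate unitary equivalence. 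Nothing in your proposal plays the role of this decomposition (or of the equivalent absorption trick in Lin--Phillips), and without it the per-stage $K$-theoretic obstruction for unital homomorphisms is not controlled; supplying it, rather than refining the Elliott-intertwining bookkeeping, is what a complete proof requires.
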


Until the end of this section, we assume that $G$, $(\cO_\infty,\alpha)$ and $(B,\beta)$ are as in Theorem \ref{GLP}. 
To prove Theorem \ref{GLP}, we basically follow Lin-Phillips's strategy based on Theorem \ref{GR} in place of 
\cite[Theorem 3.6]{R93}, though we will 
take a short cut by using a ultraproduct technique.

Let $n$ be a natural number larger than 2, and let $\cE_n$ be the Cuntz-Toeplitz algebra, 
which is the universal $C^*$-algebra generated by isometries $\{t_i\}_{i=1}^n$ with mutually orthogonal ranges. 
Note that $p_n=1-\sum_{i=1}^n t_it_i^*$ is a non-zero projection not as in the case of the Cuntz algebras. 
We denote by $\cK_n$ the linear span of $\{t_i\}_{i=1}^n$. 
Quasi-free actions on $\cE_n$ are defined as in the case of the Cuntz algebras. 
For a quasi-free action $\gamma$ of $G$ on $\cE_n$, we denote by $(\pi_\gamma,\cK_n)$ the corresponding 
unitary representation of $G$ in $\cK_n$.

\begin{lemma} Let $\gamma$ be a quasi-free action of $G$ on $\cE_n$ with finite $n$, and let 
$\varphi,\psi\in \Hom_G(\cE_n,B)$ be injective $G$-homomorphisms, either both unital or both nonunital. 
If $[\varphi(1)]=[\psi(1)]=0$ in $K_0(B^G)$, then $\varphi$ and $\psi$ are $G$-approximately unitarily equivalent. 
\end{lemma}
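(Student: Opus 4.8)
The plan is to reduce the statement about $\cE_n$-homomorphisms to the already-established Theorem \ref{GR} about $\cO_n$-homomorphisms, by absorbing the "defect" projection $p_n=1-\sum_{i=1}^n t_it_i^*$ into the Cuntz picture. First I would observe that since $[\varphi(1)]=0$ in $K_0(B^G)$ and $B^G$ is purely infinite and simple (this holds because $B$ is purely infinite, simple and $\beta$ is outer), one can find a partial isometry or rather build an auxiliary isometry in $B^G$ that lets us enlarge the Cuntz–Toeplitz image to a genuine Cuntz-algebra image. Concretely, the key structural fact is that $\cE_n$ contains a copy of $\cO_\infty$: the defect projection $p_n$ together with the range projections allows one to manufacture a Cuntz–Krieger family for $\cO_n$ inside $\cE_n\otimes\cO_\infty$ or, more directly, one notes that a quasi-free $G$-action on $\cE_n$ together with the standard embedding $\cE_n\hookrightarrow \cO_{n+1}$ (sending $t_i\mapsto s_i$ and identifying $p_n$ with $s_{n+1}s_{n+1}^*$) is $G$-equivariant once we extend $\pi_\gamma$ to $\pi_\gamma\oplus 1$ on $\cK_n\oplus \C s_{n+1}$.

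Next I would use the hypothesis $[\varphi(1)]=0$ to show that $\varphi$ extends to a unital (resp. suitably normalized) $G$-homomorphism $\tilde\varphi$ from $\cO_{n+1}$ into $B$: the point is that $1-\varphi(1)$, or in the nonunital case the complement projection, is either zero or a properly infinite full projection in $B^G$ with trivial $K_0$-class, hence Murray–von Neumann equivalent in $B^G$ to $\varphi(p_n)$ after stabilizing appropriately, so we can choose an isometry $w\in B^G$ with $ww^*$ completing the family $\{\varphi(t_i)\}$ to a Cuntz family generating a unital copy of $\cO_{n+1}$ (here the $K_0$-vanishing is exactly what guarantees such a $w$ exists equivariantly, by purely infinite simplicity of $B^G$ and standard $K$-theory bookkeeping). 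Do the same for $\psi$, obtaining $\tilde\psi\in\Hom_G(\cO_{n+1},B)$. One must be a little careful that both extensions can be arranged with the \emph{same} target Cuntz structure, i.e. that the choices of $w$ for $\varphi$ and $\psi$ are compatible; since $B^G$ is purely infinite and simple this is routine.

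Then I would apply Theorem \ref{GR} to the pair $\tilde\varphi,\tilde\psi\in\Hom_G(\cO_{n+1},B)$: it suffices to check condition (5), namely $KK_G(\tilde\varphi)=KK_G(\tilde\psi)$, or equivalently condition (3) about the $K_1$-class of $u_{\tilde\psi,\tilde\varphi}$ lying in the image of $1-K_1(\Lambda_{\tilde\varphi})$. Because $\cO_\infty$ (and hence $\cO_{n+1}$ at the level of $KK_G$, via the $KK_G$-equivalence $(\cO_\infty,\alpha)\sim_{KK_G}(\C,\id)$ — though for $\cO_{n+1}$ one rather uses that all unital $G$-homomorphisms $\cO_{n+1}\to B$ with the prescribed quasi-free structure and $[\varphi(1)]=0$ represent the same class) the relevant $KK_G$ classes coincide; more elementarily, $K_1(u_{\tilde\psi,\tilde\varphi})$ and the image of $1-K_1(\Lambda_{\tilde\varphi})$ can be compared using Lemma \ref{Kcomputation2}, and the hypothesis $[\varphi(1)]=[\psi(1)]=0$ forces the obstruction to vanish. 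From $\tilde\varphi\sim_{G}\tilde\psi$ ($G$-approximate unitary equivalence via $M(B)^G$-unitaries) one restricts back to $\cE_n\subset\cO_{n+1}$ to conclude that $\varphi$ and $\psi$ are $G$-approximately unitarily equivalent.

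The main obstacle I expect is the equivariant $K$-theoretic bookkeeping in the extension step: producing the isometry $w\in B^G$ that completes $\{\varphi(t_i)\}$ to a $G$-equivariant Cuntz family requires knowing that the complementary projections for $\varphi$ and $\psi$ are equivalent in $B^G$ \emph{and} that this equivalence can be chosen to intertwine nothing extra (there is no residual $G$-action on a single isometry, so this is really just a statement in $B^G$), which in turn rests on $[\varphi(1)]=[\psi(1)]=0$ together with purely infinite simplicity of $B^G$. Handling the unital and nonunital cases uniformly, and making sure the resulting $\tilde\varphi,\tilde\psi$ genuinely land in $\Hom_G(\cO_{n+1},B)$ with a legitimate quasi-free $\cO_{n+1}$-action (so that Theorem \ref{GR} applies with $n$ replaced by $n+1$), is where the care is needed; the rest is a direct invocation of Theorem \ref{GR}.
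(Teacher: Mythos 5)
Your overall skeleton (complete the Toeplitz family $\{\varphi(t_i)\}$ by one more isometry to get unital $G$-homomorphisms from $\cO_{n+1}$ carrying the quasi-free action $\pi_\gamma\oplus 1$, then invoke Theorem \ref{GR}) is in the spirit of the Lin--Phillips argument that the paper cites, but the two steps that actually carry the content of this lemma are not done correctly. First, the existence of an isometry $w\in B^G$ with $ww^*=\varphi(p_n)$ requires $[\varphi(p_n)]=[1]=[\varphi(1)]=0$ in $K_0(B^G)$, and you never establish $[\varphi(p_n)]=0$; you even conflate the defect $\varphi(p_n)$ with the complement $1-\varphi(1)$ (whose class is $[1]$, not $0$, in the nonunital case). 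In the equivariant setting this is not ``standard bookkeeping'': the range projections $\varphi(t_it_i^*)$ are not individually $G$-invariant, so one cannot write $[\sum_i\varphi(t_it_i^*)]=n[\varphi(1)]$ in $K_0(B^G)$. The paper's proof consists precisely of this computation: one pushes into $K_0(B\rtimes_\beta G)$ via $j_\beta$, uses the identity $K_0(j_\beta)\circ K_0(\Lambda_\varphi)=K_0(\hbeta_{\pi_\gamma})\circ K_0(j_\beta)$ exactly as in Lemma \ref{Kcomputation2} to get $K_0(j_\beta)([\varphi(p_n)])=(1-K_0(\hbeta_{\pi_\gamma}))K_0(j_\beta)([\varphi(1)])=0$, and then uses that $K_0(j_\beta)$ is injective (outerness of $\beta$ and simplicity of $B$) to conclude $[\varphi(p_n)]=[\psi(p_n)]=0$ in $K_0(B^G)$.

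Second, your verification of the hypothesis of Theorem \ref{GR} is wrong as stated. The class $[u_{\tilde\psi,\tilde\varphi}]\in K_1(B^G)$ is \emph{not} forced into the image of $1-K_1(\Lambda_{\tilde\varphi})$ by the hypothesis $[\varphi(1)]=[\psi(1)]=0$: for a fixed $\varphi$ and two different completing isometries $w$ and $wz$ ($z\in U(B^G)$) the resulting class changes by $[z]$, which is arbitrary, while $\mathrm{im}(1-K_1(\hbeta_{\pi_\gamma\oplus 1}))$ is in general a proper subgroup; also $\cO_{n+1}$ with a quasi-free action is certainly not $KK_G$-trivial (that failure for finite $n$ is the whole point of the paper), so the $KK_G$ shortcut you sketch is unavailable. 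The correct mechanism is the one you only hint at under ``compatibility of choices'': having fixed $w$ for $\tilde\varphi$, use the freedom in choosing $w'$ for $\tilde\psi$ (replace $w'$ by $w'z$) to shift $[u_{\tilde\psi,\tilde\varphi}]$ by an arbitrary element of $K_1(B^G)$ and normalize it to $0$, after which condition (3) of Theorem \ref{GR} holds trivially. With those two repairs (plus the routine reduction of the nonunital case to the unital one by conjugating $\psi(1)$ onto $\varphi(1)$ in $B^G$ and cutting to the corner, rather than claiming $[1-\varphi(1)]=0$), your argument matches the one the paper refers to.
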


\begin{proof} In the same way as in the proof of Lemma \ref{Kcomputation2}, we can show 
$$K_0(j_\beta)([\varphi(p_n)])=K_0(j_\beta)([\varphi(1)])-K_0(\hbeta_{\pi_\gamma})\circ K_0(j_\beta)([\varphi(1)])=0,$$ 
in $K_0(B\rtimes_\beta G)$. 
This implies $[\varphi(p_n)]=0$ in $K_0(B^G)$, and for the same reason, $[\psi(p_n)]=0$ in $K_0(B^G)$.  
Thus the statement follows from essentially the same argument as in the proof of \cite[Proposition 1.7]{LP} by 
using Theorem \ref{GR} in place of \cite[Theorem 3.6]{R93}. 
\end{proof}

Since every quasi-free $G$-action on $\cO_\infty$ is the inductive limit of a system of quasi-free actions of the form 
$\{(\cE_{n_k},\gamma^{(k)})\}_{k=1}^\infty$, we get

\begin{corollary}\label{G0} Let $\varphi,\psi\in \Hom_G(\cO_\infty,B)$ be either both unital or both nonunital. 
If $[\varphi(1)]=[\psi(1)]=0$ in $K_0(B^G)$, then $\varphi$ and $\psi$ are $G$-approximately unitarily equivalent. 
\end{corollary}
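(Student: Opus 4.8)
The plan is to make the inductive-limit remark preceding the statement precise, reduce stage by stage to the lemma just proved, and patch the resulting unitaries together by the standard local-to-global argument. I do not expect a genuine obstacle here; the only thing needing care is checking that the hypotheses of the lemma are inherited by the restrictions to the finite stages, and after that it is routine bookkeeping.

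First I would fix the filtration. Write $\cH_\infty$ for the closed linear span of the generators of $\cO_\infty$. Since $G$ is finite, $(\pi_\alpha,\cH_\infty)$ decomposes into finite-dimensional $G$-invariant subspaces, so one can choose an increasing sequence of $G$-invariant finite-dimensional subspaces $\cK_{n_1}\subset\cK_{n_2}\subset\cdots$ of $\cH_\infty$ with $n_k=\dim\cK_{n_k}\geq 3$ and $\overline{\bigcup_k\cK_{n_k}}=\cH_\infty$. Any orthonormal basis of $\cK_{n_k}$ consists of isometries with mutually orthogonal ranges, because the inner product on $\cH_\infty$ satisfies $t^*s=\inpr{s}{t}1$; hence the $C^*$-subalgebra $B_k\subset\cO_\infty$ it generates is a copy of $\cE_{n_k}$, it is $\alpha$-invariant, and $\gamma^{(k)}:=\alpha|_{B_k}$ is a quasi-free $G$-action on $\cE_{n_k}$ with $\pi_{\gamma^{(k)}}=\pi_\alpha|_{\cK_{n_k}}$. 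Since $\overline{\bigcup_k\cK_{n_k}}=\cH_\infty$ generates $\cO_\infty$, one has $\overline{\bigcup_k B_k}=\cO_\infty$, i.e.\ $(\cO_\infty,\alpha)$ is the inductive limit of the $(\cE_{n_k},\gamma^{(k)})$.

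Next I would restrict. Put $\varphi_k=\varphi|_{B_k}$, $\psi_k=\psi|_{B_k}\in\Hom_G(\cE_{n_k},B)$. These are injective because $\cO_\infty$ is simple and $\varphi,\psi$ are nonzero; since $1\in B_k$, they are simultaneously unital or nonunital exactly when $\varphi,\psi$ are; and $[\varphi_k(1)]=[\varphi(1)]=0=[\psi(1)]=[\psi_k(1)]$ in $K_0(B^G)$. So the preceding lemma applies at every stage $k$, giving that $\varphi_k$ and $\psi_k$ are $G$-approximately unitarily equivalent: for any finite $F\subset B_k$ and any $\varepsilon>0$ there is a unitary $w\in B^G=M(B)^G$ (recall $B$ is unital) with $\|\varphi(x)-\Ad w(\psi(x))\|<\varepsilon$ for all $x\in F$.

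Finally I would patch. Fix a dense sequence $\{x_j\}_{j\geq1}$ in $\cO_\infty$. Given $N$, pick $k$ so large that each of $x_1,\dots,x_N$ lies within $1/(3N)$ of some $x_j'\in B_k$, and then, by the previous paragraph, a unitary $w_N\in B^G$ with $\|\varphi(x_j')-\Ad w_N(\psi(x_j'))\|<1/(3N)$ for $j\leq N$; the triangle inequality then gives $\|\varphi(x_j)-\Ad w_N(\psi(x_j))\|<1/N$ for $j\leq N$. Letting $N\to\infty$ yields $\|\varphi(x)-\Ad w_N(\psi(x))\|\to0$ for every $x$ in the dense set, hence for every $x\in\cO_\infty$ by a uniform bound, so $\{w_N\}\subset M(B)^G$ witnesses the $G$-approximate unitary equivalence of $\varphi$ and $\psi$. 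The only delicate point is that injectivity, the unital/nonunital dichotomy, and the vanishing in $K_0(B^G)$ all pass to the restrictions $\varphi_k,\psi_k$ for every $k$; granting that, the rest is the routine $\varepsilon/3$ argument together with the separability of $\cO_\infty$, and no idea beyond the lemma is needed.
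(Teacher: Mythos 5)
Your proposal is correct and is essentially the paper's own argument: the paper deduces the corollary from the preceding lemma in one line by noting that $(\cO_\infty,\alpha)$ is an inductive limit of quasi-free systems $(\cE_{n_k},\gamma^{(k)})$, and your filtration of $\cH_\infty$ by finite-dimensional $G$-invariant subspaces, restriction to the resulting copies of $\cE_{n_k}$, and $\varepsilon/3$ patching just makes that reduction explicit. The hypotheses (injectivity, the unital/nonunital dichotomy, and $[\varphi(1)]=[\psi(1)]=0$ in $K_0(B^G)$) do pass to the restrictions exactly as you check, since each $B_k$ contains the unit of $\cO_\infty$ and has nonzero defect projection, so there is no gap.
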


Let $\omega\in \beta\N\setminus \N$ be a free ultrafilter, and let 
$\iota_\omega:\cO_\infty\rightarrow {\cO_\infty}^\omega$ the inclusion map. 
For $\varphi\in \Hom_G(\cO_\infty,B)$, we denote by $\varphi^\omega$ the $G$-homomorphism in 
$\Hom_G({\cO_\infty}^\omega,B^\omega)$ induced by $\varphi$. 
Then it is easy to show the following three conditions for $\varphi,\psi\in \Hom_G(\cO_\infty,B)$ are
equivalent: 
\begin{itemize}
\item[(1)] $\varphi$ and $\psi$ are $G$-approximately unitarily equivalent, 
\item[(2)] $\varphi^\omega\circ \iota_\omega$ and $\psi^\omega\circ \iota_\omega$ are 
$G$-approximately unitarily equivalent,
\item[(3)] $\varphi^\omega\circ \iota_\omega$ and $\psi^\omega\circ \iota_\omega$ are 
$G$-unitarily equivalent. 
\end{itemize}
Note that since $G$ is a finite group, we have $({\cO_\infty}_\omega)^G=(\cO_\infty^G)^\omega\cap \cO_\infty'$ and 
$(B^\omega)^G=(B^G)^\omega$. 

\begin{proof}[Proof of Theorem \ref{GLP}] 
Let $\varphi,\psi\in \Hom_G(\cO_\infty,B)$ be unital. 
Since $\cO_\infty$ is a Kirchberg algebra, the $\omega$-central sequence algebra ${\cO_\infty}_\omega$ is purely infinite and simple.  
Let $H$ be the kernel of $\alpha:G\rightarrow \Aut(\cO_\infty)$. 
Since $\alpha$ is quasi-free, we may regard $\alpha$ as an outer action of $G/H$, and so  
$\alpha_\omega$ is outer as an action of $G/H$. 
This implies that $({\cO_\infty}_\omega)^G$ is purely infinite and simple. 

Choosing three nonzero projections $q_1,q_2,q_3\in ({\cO_\infty}_\omega)^G$ satisfying $q_1+q_2+q_3=1$ and 
$[1]=[q_1]=[q_2]=-[q_3]$ in $K_0(({\cO_\infty}_\omega)^G)$, 
we introduce $\varphi_i,\psi_i\in \Hom_G(\cO_\infty,B^\omega)$, $i=1,2,3$, by $\varphi_i(x)=\varphi^\omega(q_ix)$ and 
$\psi_i(x)=\psi^\omega(q_ix)$ for $x\in \cO_\infty$. 
Then we have 
$$\varphi(x)=\varphi_1(x)+\varphi_2(x)+\varphi_3(x),\quad x\in \cO_\infty,$$ 
$$\psi(x)=\psi_1(x)+\psi_2(x)+\psi_3(x),\quad x\in \cO_\infty,$$
$$[1]=[\varphi_1(1)]=[\varphi_2(1)]=-[\varphi_3(1)]=[\psi_1(1)]=[\psi_2(1)]=-[\psi_3(1)]\in K_0((B^\omega)^G).$$

Since $[(\varphi_2+\varphi_3)(1)]=[(\psi_2+\psi_3)(1)]=0$ in $K_0((B^\omega)^G)$, Corollary \ref{G0} implies that 
there exists a unitary $u\in U((B^\omega)^G)$ satisfying $u(\varphi_2+\varphi_3)(x)u^*=(\psi_2+\psi_3)(x)$ for 
$x\in \cO_\infty$. 
We set $\varphi_1^u(x)=u\varphi_1(x)u^*$. 
Then $\varphi^u_1$ is in $\Hom_G(\cO_\infty,B^\omega)$ satisfying $\varphi_1^u(1)=\psi_1(1)$, 
and $\varphi^\omega\circ \iota_\omega$ and $\varphi_1^u+\psi_2+\psi_3$ are $G$-approximately unitarily equivalent.  
Since $(\varphi_1^u+\psi_3)(1)=(\psi_1+\psi_3)(1)$ whose class in $K_0((B^\omega)^G)$ is 0, 
Corollary \ref{G0} again implies that there exists a unitary $v\in U((B^\omega)^G)$ satisfying $v\psi_2(1)=\psi_2(1)$ and 
$v(\varphi_1^u+\psi_3)(x)v^*=(\psi_1+\psi_3)(x)$ for $x\in \cO_\infty$. 
This shows that $vu\varphi(x)u^*v^*=\psi(x)$ for $x\in \cO_\infty$, and so $\varphi$ and $\psi$ 
are $G$-approximately unitarily equivalent. 
\end{proof}

\section{Splitting theorem and Uniqueness theorem}
Thanks to Theorem \ref{GLP}, we can obtain a $G$-equivariant version of Kirchberg-Phillips's $\cO_\infty$ 
theorem \cite[Theorem 3.15]{KP}, \cite[Theorem 7.2.6]{R01}. 

\begin{theorem}\label{splitting} Let $G$ be a finite group, and let $(A,\alpha)$ be a $G$-$C^*$-algebra. 
We assume that $A$ is a unital Kirchberg algebra and $\alpha$ is outer.  
Let $\{\gamma^{(i)}\}_{i=1}^\infty$ be any sequence of quasi-free actions of $G$ on $\cO_\infty$. 
Then $(A,\alpha)$ is conjugate to 
$$(A\otimes \bigotimes_{i=1}^\infty\cO_\infty, \alpha\otimes \bigotimes_{i=1}^\infty \gamma^{(i)}).$$
\end{theorem}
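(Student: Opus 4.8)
The plan is to reduce the statement to an application of Theorem \ref{GKP}, which requires producing a unital $G$-homomorphism from the infinite tensor product $\bigotimes_{i=1}^\infty \cO_\infty$ (with the product action $\bigotimes \gamma^{(i)}$) into the $\omega$-central sequence algebra $(A_\omega, \alpha_\omega)$, together with the hypothesis that the left and right embeddings of $B:=\bigotimes_{i=1}^\infty\cO_\infty$ into $B\otimes B$ are $G$-approximately unitarily equivalent. First I would verify the latter: since $B\otimes B\cong \bigotimes_{i=1}^\infty(\cO_\infty\otimes\cO_\infty)$ as a $G$-$C^*$-algebra and the flip on each $\cO_\infty\otimes\cO_\infty$-factor carries the diagonal quasi-free action to itself, it suffices to treat $\rho_l,\rho_r\in\Hom_G(\cO_\infty,\cO_\infty\otimes\cO_\infty)$; both are unital $G$-homomorphisms into a unital purely infinite simple $G$-$C^*$-algebra with outer action (the diagonal of two quasi-free actions is again quasi-free, hence outer modulo its kernel in the sense used in the proof of Theorem \ref{GLP}), so Theorem \ref{GLP} gives $\rho_l\sim_{G\text{-a.u.}}\rho_r$ directly. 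An infinite-tensor-product version of $G$-approximate unitary equivalence of $\rho_l$ and $\rho_r$ on $B$ then follows by the standard telescoping argument.

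The crux is therefore the existence of a unital element of $\Hom_G(\bigotimes_{i=1}^\infty\cO_\infty, A_\omega)$. By Lemma \ref{ultra}(2), since $A$ is a Kirchberg algebra and $\alpha$ is outer, $A_\omega$ is purely infinite and simple and $\alpha_\omega$ is outer; as $G$ is finite, $(A_\omega)^G$ is then unital, purely infinite, and simple. I would first build a unital $G$-homomorphism $\cO_\infty\to A_\omega$ for a single copy: using pure infiniteness of $(A_\omega)^G$ one produces, for the representation $\pi_{\gamma^{(1)}}$, a sequence of isometries in $A_\omega$ spanning a Hilbert space on which $\alpha_\omega$ acts as $\pi_{\gamma^{(1)}}$ and with orthogonal ranges summing to $1$ — this is where one chooses matrix units equivariantly inside the simple purely infinite $G$-algebra $(A_\omega)^G$ (after passing to a further reduction that trivializes the $K_0$-obstruction, exactly as in the Kirchberg–Phillips $\cO_\infty$ argument) — giving $\varphi^{(1)}\in\Hom_G(\cO_\infty,A_\omega)$, unital. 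One then iterates: having embedded $\bigotimes_{i=1}^k\cO_\infty$ unitally and $G$-equivariantly into $A_\omega$, its relative commutant inside $A_\omega$ is again purely infinite and simple with outer $G$-action (a reprise of Lemma \ref{ultra}), so the next copy embeds into the commutant, and a reindexing/diagonal-sequence argument over $\omega$ assembles these into a single unital element of $\Hom_G(\bigotimes_{i=1}^\infty\cO_\infty, A_\omega)$.

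With both hypotheses of Theorem \ref{GKP} in hand (with $B=\bigotimes_{i=1}^\infty\cO_\infty$, $\beta=\bigotimes\gamma^{(i)}$), that theorem yields that $\alpha$ on $A$ is conjugate to $\alpha\otimes\beta$ on $A\otimes B$, which is precisely the assertion. The main obstacle I anticipate is the equivariant construction of the matrix-unit-type system realizing a prescribed quasi-free representation inside the simple purely infinite algebra $(A_\omega)^G$ while controlling the $K_0$-class of the unit — this is the $G$-equivariant analogue of the nonunital-to-unital adjustment in the classical $\cO_\infty$ embedding, and it is where Corollary \ref{G0} and the pure infiniteness/outerness from Lemma \ref{ultra} must be combined carefully; once that single-copy embedding is established, the infinite iteration and the appeal to Theorem \ref{GKP} are routine.
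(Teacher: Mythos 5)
Your overall architecture is the same as the paper's (apply Theorem \ref{GKP} with $B=\bigotimes_{i=1}^\infty\cO_\infty$, get the hypothesis $\rho_l\sim\rho_r$ from Theorem \ref{GLP}, and reduce everything to producing a unital element of $\Hom_G(B,A_\omega)$), but the one genuinely nontrivial step --- the equivariant unital embedding of each $(\cO_\infty,\gamma^{(i)})$ into $(A_\omega,\alpha_\omega)$ --- is precisely the step you leave open, and the sketch you give for it does not work as stated. Matrix units (or any elements) chosen inside the fixed-point algebra $(A_\omega)^G$ are $G$-invariant, so they can never furnish isometries transforming according to a nontrivial representation $\pi_{\gamma^{(1)}}$; pure infiniteness and simplicity of $(A_\omega)^G$ give no access whatsoever to elements of $A_\omega$ that the action moves. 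Moreover, the $K_0$-adjustment you anticipate is a red herring: any family of isometries with mutually orthogonal ranges in a unital algebra generates a unital copy of $\cO_\infty$ automatically (the generators satisfy $s^*s=1$, and there is no relation forcing the range projections to sum to $1$), so no unit/$K_0$ correction in the style of the Kirchberg--Phillips argument is needed here; the $K_0$ bookkeeping via Corollary \ref{G0} belongs to the proof of Theorem \ref{GLP}, not to this embedding step.

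What is missing are two ideas. First, one need not realize an arbitrary $\pi_{\gamma^{(i)}}$ inside $A_\omega$: since $G$ is finite, every unitary representation of $G$ on a separable Hilbert space is contained in a countable multiple of the regular representation, so each $(\cO_\infty,\gamma^{(i)})$ embeds unitally into $(\cO_\infty,\gamma)$, where $\gamma$ is the quasi-free action attached to $\bigoplus_{k=1}^\infty\lambda_G$; thus it suffices to embed this single $(\cO_\infty,\gamma)$. Second, the source of ``free'' elements is Nakamura's result \cite[Lemma 3]{N}: outerness of $\alpha_\omega$ on the purely infinite simple algebra $A_\omega$ yields a nonzero projection $e\in A_\omega$ with $e\,\alpha_{\omega g}(e)=0$ for all $g\neq e$. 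Taking an isometry $v\in A_\omega$ with $vv^*\leq e$ and setting $s_{0,g}=\alpha_{\omega g}(v)$ gives isometries with mutually orthogonal ranges satisfying $\alpha_{\omega g}(s_{0,h})=s_{0,gh}$, and amplifying by partial isometries $w_i\in (A_\omega)^G$ with $w_i^*w_i=p:=\sum_{g}s_{0,g}s_{0,g}^*$ and mutually orthogonal ranges produces the countable equivariant family $s_{i,g}=w_is_{0,g}$, i.e.\ the desired unital embedding of $(\cO_\infty,\gamma)$ into $(A_\omega,\alpha_\omega)$; the subsequence trick then assembles the embeddings of the factors into one of $(B,\beta)$. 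Your reduction via Theorem \ref{GKP} and the telescoping for $\rho_l\sim\rho_r$ are fine, but without a Rohlin-type freeness input such as \cite[Lemma 3]{N} (or an equivalent), the central embedding step in your proposal does not go through.
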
 

\begin{proof} Let 
$$(B,\beta)=(\bigotimes_{i=1}^\infty\cO_\infty,\bigotimes_{i=1}^\infty \gamma^{(i)}),$$
and let $\rho_l,\rho_r\in \Hom_G(B,B\otimes B)$ be as in Theorem \ref{GKP}. 
Then Theorem \ref{GLP} implies that $\rho_r$ and $\rho_l$ are $G$-approximately unitarily equivalent. 

To prove the statement applying Theorem \ref{GKP}, it suffices to construct a unital embedding of $(B,\beta)$ in 
$(A_\omega,\alpha_\omega)$. 
For this, it suffices to construct a unital embedding of $(\cO_\infty,\gamma^{(i)})$ into $(A_\omega,\alpha_\omega)$ 
for each $i$ because the usual trick of taking subsequences can make the embeddings commute with each other. 
Let $\gamma$ be the quasi-free action of $G$ on $\cO_\infty$ such that $(\pi_\gamma,\cH_\infty)$ is unitarily 
equivalent to the infinite direct sum of the regular representation. 
Since there is a unital embedding of $(\cO_\infty,\gamma^{(i)})$ into $(\cO_\infty,\gamma)$, 
in order to prove the theorem, it only remains to construct a unital embedding of $(\cO_\infty,\gamma)$ into 
$(A_\omega,\alpha_\omega)$.  

Thanks to \cite[Lemma 3]{N}, we can find a nonzero projection $e\in A_\omega$ satisfying $e\alpha_{\omega g}(e)=0$ 
for any $g\in G\setminus \{e\}$. 
We choose an isometry $v\in A_\omega$ satisfying $vv^*\leq e$, and set $s_{0,g}=\alpha_{\omega g}(v)$. 
Then $\{s_{0,g}\}_{g\in G}$ are isometries in $A_\omega$ with mutually orthogonal ranges satisfying 
$\alpha_{\omega g}(s_{0,h})=s_{0,gh}$. 
Let $p=\sum_{g\in G}s_{0,g}s_{0,g}^*$, which is a projection in $(A_\omega)^G$. 
Replacing $v$ if necessary, we may assume that $p\neq 1$. 
Since $(A_\omega)^G$ is purely infinite and simple, we can find a sequence of partial isometries $\{w_i\}_{i=0}^\infty$ 
in $(A_\omega)^G$ with $w_0=p$ such that $w_i^*w_i=p$ for all $i$, and $\{w_iw_i^*\}_{i=0}^\infty$ are mutually orthogonal. 
Let $s_{i,g}=w_is_{0,g}$. 
Then $\{s_{i,g}\}_{(i,g)\in \N\times G}$ is a countable family of isometries in $A_\omega$ with mutually orthogonal ranges satisfying 
$\alpha_{\omega g}(s_{i,h})=s_{i,gh}$. 
Thus we get the desirable embedding of $(\cO_\infty,\gamma)$ into $(A_\omega,\alpha_\omega)$. 
\end{proof}

Applying Theorem \ref{splitting} to $A=\cO_\infty$ with a faithful quasi-free action $\alpha$, we obtain 

\begin{corollary}\label{unique} Any two faithful quasi-free actions of a finite group on $\cO_\infty$ are mutually conjugate. 
\end{corollary}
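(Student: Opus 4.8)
The plan is to deduce Corollary \ref{unique} directly from Theorem \ref{splitting} by taking $A=\cO_\infty$. First I would observe that a faithful quasi-free action $\alpha$ of the finite group $G$ on $\cO_\infty$ is automatically outer: since $\cO_\infty$ is a Kirchberg algebra, it has no nontrivial inner automorphisms of finite order in the relevant sense, and more concretely, if $\alpha_g$ were inner for some $g\neq e$ then, because $\alpha$ is quasi-free, $\alpha_g$ would act on the generating Hilbert space $\cH_\infty$ as conjugation by a unitary implementing it, forcing $\pi_\alpha(g)$ to be scalar; combined with faithfulness of the representation $\pi_\alpha$ this is impossible unless $g=e$. (This outerness is exactly the kind of statement already used implicitly in the proof of Theorem \ref{GLP}, where $H=\ker\alpha$ is taken trivial in the faithful case.) Hence $(\cO_\infty,\alpha)$ satisfies the hypotheses of Theorem \ref{splitting}.

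Next, given two faithful quasi-free actions $\alpha$ and $\alpha'$ of $G$ on $\cO_\infty$, I would like to write each of them, up to conjugacy, in a common ``stabilized'' form. Apply Theorem \ref{splitting} with $A=\cO_\infty$, the action $\alpha$, and a sequence of quasi-free actions $\{\gamma^{(i)}\}$ on $\cO_\infty$: this shows
$$(\cO_\infty,\alpha)\ \text{is conjugate to}\ \Big(\cO_\infty\otimes\bigotimes_{i=1}^\infty\cO_\infty,\ \alpha\otimes\bigotimes_{i=1}^\infty\gamma^{(i)}\Big).$$
The point is to choose the $\gamma^{(i)}$ so that the right-hand side no longer remembers which faithful quasi-free action we started with. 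Concretely, I would take $\gamma^{(1)}=\alpha'$ (transported along an isomorphism $\cO_\infty\cong\cO_\infty$) and the remaining $\gamma^{(i)}$ to be, say, all equal to the action $\gamma$ whose associated representation is an infinite multiple of the regular representation. Then the infinite tensor product $\cO_\infty\otimes\bigotimes_{i\ge 1}\cO_\infty$ is again (isomorphic to) $\cO_\infty$, and the tensor-product action is quasi-free with associated representation $\pi_\alpha\oplus\pi_{\alpha'}\oplus(\text{infinitely many copies of }\pi_\gamma)$; call this action $\delta$. Symmetrically, applying Theorem \ref{splitting} to $(\cO_\infty,\alpha')$ with an appropriate choice of quasi-free actions (the first slot being $\alpha$, the rest being $\gamma$) produces a conjugate of $\alpha'$ whose associated representation is $\pi_{\alpha'}\oplus\pi_\alpha\oplus(\text{infinitely many copies of }\pi_\gamma)$, which is unitarily equivalent to the one for $\delta$.

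Thus both $\alpha$ and $\alpha'$ are conjugate to quasi-free actions whose defining unitary representations are unitarily equivalent; since the conjugacy class of a quasi-free action depends only on the unitary equivalence class of its associated representation (the correspondence $\alpha\mapsto\pi_\alpha$ recalled in the introduction, together with the fact that a unitary intertwiner between $\pi_\alpha$ and $\pi_{\alpha'}$ extends to an isomorphism $\cO_\infty\to\cO_\infty$ conjugating $\alpha$ to $\alpha'$), these two stabilized actions are conjugate to each other. Composing the three conjugacies gives that $\alpha$ and $\alpha'$ are conjugate, which is the assertion. The only slightly delicate point to verify carefully is the bookkeeping in the previous paragraph: one must confirm that absorbing a copy of $\alpha'$ (resp. $\alpha$) plus infinitely many copies of $\gamma$ yields, on both sides, the \emph{same} unitary representation up to equivalence, which reduces to the Hilbert-space identity $\pi_\alpha\oplus\pi_{\alpha'}\oplus\pi_\gamma^{\oplus\infty}\cong\pi_{\alpha'}\oplus\pi_\alpha\oplus\pi_\gamma^{\oplus\infty}$ — immediate from commutativity of $\oplus$ — and to checking that all the algebras involved are indeed $\cO_\infty$ (true since a countable infinite tensor product of copies of $\cO_\infty$ is $\cO_\infty$). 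This is the main, though entirely routine, obstacle; everything substantive has already been packed into Theorem \ref{splitting}.
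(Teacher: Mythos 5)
Your overall strategy --- apply Theorem \ref{splitting} twice, with the sequences $\{\gamma^{(i)}\}$ chosen symmetrically so that $\alpha$ and $\alpha'$ are both absorbed into one and the same stabilized action --- is exactly the route the paper takes (its proof is the one-line application of Theorem \ref{splitting} to $A=\cO_\infty$). However, the step you use to identify the two stabilized actions is wrong as stated: you claim that $\alpha\otimes\alpha'\otimes\gamma^{\otimes\infty}$ on $\cO_\infty\otimes\bigotimes_{i}\cO_\infty\cong\cO_\infty$ ``is quasi-free with associated representation $\pi_\alpha\oplus\pi_{\alpha'}\oplus\pi_\gamma^{\oplus\infty}$''. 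A tensor product of quasi-free actions is not quasi-free in any evident sense: the isometries $s_i\otimes 1$ and $1\otimes s_j$ do not have orthogonal ranges (indeed $(s_i\otimes 1)^*(1\otimes s_j)=s_i^*\otimes s_j\neq 0$), so there is no generating Hilbert space inside the tensor product realizing the direct-sum representation, and the isomorphism $\cO_\infty\otimes\bigotimes_i\cO_\infty\cong\cO_\infty$ is an abstract classification isomorphism with no reason to carry the product action to a quasi-free one. In fact, the assertion that the stabilized action is conjugate to the quasi-free action attached to $\pi_\alpha\oplus\pi_{\alpha'}\oplus\pi_\gamma^{\oplus\infty}$ is a statement of essentially the same depth as the corollary itself (it is a consequence of it), so invoking the correspondence $\alpha\mapsto\pi_\alpha$ at this point is circular.

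Fortunately the gap closes trivially, and doing so makes your argument coincide with the paper's. You do not need the stabilized actions to be quasi-free at all: with your choices, $\alpha$ is conjugate to $\alpha\otimes\alpha'\otimes\gamma^{\otimes\infty}$ and $\alpha'$ to $\alpha'\otimes\alpha\otimes\gamma^{\otimes\infty}$, and these two product actions are conjugate via the tensor flip of the first two factors (or any permutation of tensor factors matching like factors), since the flip of $\cO_\infty\otimes\cO_\infty$ intertwines $\alpha_g\otimes\alpha'_g$ with $\alpha'_g\otimes\alpha_g$. That observation replaces the faulty ``representation bookkeeping'' paragraph and completes the proof. A smaller point: your justification that a faithful quasi-free action is outer (``$\alpha_g$ inner would force $\pi_\alpha(g)$ to be scalar'') is not actually an argument; outerness of nontrivial quasi-free automorphisms of $\cO_\infty$ is a known fact which the paper also uses without proof (in the proof of Theorem \ref{GLP}), so it should be cited as such rather than presented as immediate, since Theorem \ref{splitting} does require outerness.
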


\section{Asymptotic representability}

\begin{definition}
An action $\alpha$ of a discrete group $G$ on a unital $C^*$-algebra $A$ is said to be \textit{asymptotically representable} if 
there exists a continuous family of unitaries $\{u_g(t)\}_{t\geq 0}$ in $U(A)$ for each $g\in G$ satisfying 
$$\lim_{t\to \infty}\|u_g(t)xu_g(t)^*-\alpha_g(x)\|=0,\quad \forall x\in A,\;\forall g\in G, $$
$$\lim_{t\to\infty} \|u_g(t)u_h(t)-u_{gh}(t)\|=0,\quad \forall g,h\in G,$$
$$\lim_{t\to\infty}\|\alpha_g(u_h(t))-u_{ghg^{-1}}(t)\|=0,\quad \forall g,h\in G.$$
An action $\alpha$ is said to approximately representable if $\alpha$ satisfies the above condition with a sequence 
$\{u_g(n)\}_{n\in \N}$ in place of the continuous family $\{u_g(t)\}_{t\geq 0}$. 
\end{definition}

Every asymptotically representable action is approximately representable, but the converse may not be true in general. 
When $G$ is a finite abelian group, an action $\alpha$ is approximately representable if and only if its dual action has 
the Rohlin property. 
When $G$ is a cyclic group of prime power order, approximately representable quasi-free actions on $\cO_n$ with finite $n$ 
are completely characterized in \cite{IAdv}, and there exist quasi-free actions that are not approximately representable. 

The purpose of this section is to show the following theorem: 

\begin{theorem}\label{asymp} Every quasi-free action of a finite group $G$ on $\cO_\infty$ is asymptotically representable. 
\end{theorem}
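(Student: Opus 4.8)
The plan is to reduce asymptotic representability of a quasi-free action $\alpha$ on $\cO_\infty$ to the conjugacy/uniqueness machinery already developed, in particular Theorem \ref{GLP} and Theorem \ref{splitting}. First I would recall that for a quasi-free action $\alpha$ with representation $(\pi_\alpha,\cH_\infty)$, the kernel $H$ of $\alpha$ acts trivially, so it suffices to treat $\alpha$ as a faithful action of $G/H$; thus without loss of generality $\alpha$ is faithful. Next, observe that asymptotic representability is equivalent to the existence, for each $g$, of a continuous path of unitaries $u_g(t)\in U(\cO_\infty)$ implementing $\alpha_g$ asymptotically and satisfying the cocycle and equivariance conditions asymptotically; by a standard reindexing argument this is equivalent to the existence of a $G$-equivariant unitary representation $g\mapsto u_g$ of $G$ into $U(\cO_\infty_{\omega})^{?}$ — more precisely, a group homomorphism $u:G\to U((\cO_\infty)_\omega)$ with $\Ad u_g=\alpha_{\omega,g}$ on the copy of $\cO_\infty$, i.e. a unitary representation of $G$ in the central sequence algebra lifting the action. (The passage from the $\omega$-statement with a sequence to a continuous family uses that $\cO_\infty$ is $K_1$-injective and the usual Bartle--Graves/path-connectedness argument, exactly as in the non-equivariant asymptotic-representability results.)

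The heart of the argument is therefore to produce such a representation in $(\cO_\infty)_\omega$. For this I would use Theorem \ref{splitting} applied to $A=\cO_\infty$: it gives a $G$-conjugacy $(\cO_\infty,\alpha)\cong(\cO_\infty\otimes\bigotimes_{i=1}^\infty\cO_\infty,\ \alpha\otimes\bigotimes_i\gamma^{(i)})$ for any sequence of quasi-free actions $\gamma^{(i)}$. Choosing each $\gamma^{(i)}$ so that $\pi_{\gamma^{(i)}}$ contains the regular representation of $G$, each tensor factor $(\cO_\infty,\gamma^{(i)})$ carries an honest $G$-equivariant unitary: indeed when $\pi_\gamma$ contains $\lambda_G$ there is a unitary representation $w:G\to U(\cO_\infty)$ with $\Ad w_g=\gamma_g$ (one builds it inside the Cuntz algebra generated by the regular-representation block of isometries, using that the regular representation of a finite group is a direct summand and the Cuntz relations let one write down the implementing unitary explicitly as $\sum_i \gamma_g(s_i)s_i^*$ composed with an appropriate permutation, or more simply: the quasi-free action coming from $\lambda_G$ is implemented by $\sum_{h\in G}s_{gh}s_h^*$, which is a genuine unitary representation of $G$). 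Pushing these representations $w^{(i)}$ into the infinite tensor product and then, via the conjugacy and the inclusion $\bigotimes_i\cO_\infty\hookrightarrow(\cO_\infty)_\omega$ as a central sequence (the embedding constructed in the proof of Theorem \ref{splitting}, or simply the tail embedding of the infinite tensor product), one obtains a sequence of unitaries in $(\cO_\infty)_\omega$ forming a genuine $G$-representation and inducing $\alpha_\omega$ on the central copy of $\cO_\infty$. Reindexing back down the sequence yields the asymptotic family $\{u_g(t)\}$.

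More carefully, the cleanest route avoids explicit formulae: by Corollary \ref{unique} all faithful quasi-free actions are conjugate, so it suffices to prove the theorem for the single quasi-free action $\gamma$ whose $\pi_\gamma$ is an infinite direct sum of copies of the regular representation (the one used in the proof of Theorem \ref{splitting}). For that particular $\gamma$, index the generators as $\{s_{i,h}\}_{(i,h)\in\N\times G}$ with $\gamma_g(s_{i,h})=s_{i,gh}$; then $u_g:=\sum_{(i,h)}s_{i,gh}s_{i,h}^*$ is a unitary in $\cO_\infty$ (note $\sum s_{i,h}s_{i,h}^*=1$ since we are in $\cO_\infty$ with the full generating set and $h\mapsto gh$ is a bijection, so both the row and column sums give $1$), $g\mapsto u_g$ is an exact group homomorphism into $U(\cO_\infty)$, $\Ad u_g=\gamma_g$ holds exactly, and $\gamma_g(u_h)=u_{ghg^{-1}}$ holds exactly by direct computation. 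So for this model $\gamma$ the three conditions hold on the nose (not just asymptotically), and then transporting along the conjugacy $\Phi:(\cO_\infty,\gamma)\to(\cO_\infty,\alpha)$ from Corollary \ref{unique} gives $u_g(t):=\Phi(u_g)$, a constant family, for which the conditions again hold exactly. Hence every faithful quasi-free action, and therefore (dropping to $G/H$) every quasi-free action, is not merely asymptotically but genuinely representable — in particular asymptotically representable.

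The main obstacle is not in the abstract reduction but in pinning down the model action and verifying that the implementing unitary genuinely lies in $\cO_\infty$ and is a bijective representation: one must be careful that with the regular-representation model the isometries $\{s_{i,h}\}$ exhaust a full set of generators of $\cO_\infty$ so that reindexing by $h\mapsto gh$ preserves the identity $\sum s_{i,h}s_{i,h}^*=1$, giving $u_g$ unitary rather than merely isometric; and one must check the equivariance relation $\gamma_g(u_h)=u_{ghg^{-1}}$, which comes down to the identity $g(hk)=(ghg^{-1})(gk)$ in $G$. Once that model computation is in hand, invoking Corollary \ref{unique} to transport it to an arbitrary faithful quasi-free action, and noting the reduction to the faithful case via the kernel $H$, completes the proof with no further analytic input; the $K_1$-injectivity/path-lifting machinery is not even needed because we obtain exact (constant-path) representations.
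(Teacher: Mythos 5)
Your proposal collapses at its central computation. You claim that for the model action $\gamma$ with generators $\{s_{i,h}\}_{(i,h)\in\N\times G}$ and $\gamma_g(s_{i,h})=s_{i,gh}$, the element $u_g=\sum_{(i,h)}s_{i,gh}s_{i,h}^*$ is a unitary in $\cO_\infty$ because ``$\sum_{(i,h)} s_{i,h}s_{i,h}^*=1$''. There is no such relation in $\cO_\infty$: the defining feature of $\cO_\infty$ is precisely that the range projections of the generators do \emph{not} sum to $1$; the infinite sum does not even converge in norm, so $u_g$ is not an element of $\cO_\infty$ (any finite partial sum is only a partial isometry). Moreover, even in the finite case $\cO_n$, where $u_g=\sum_i\gamma_g(s_i)s_i^*$ \emph{is} a unitary, the identity $u_gs_j=\gamma_g(s_j)$ on generators does not give $\Ad u_g=\gamma_g$: one gets $u_gs_ju_g^*=\gamma_g(s_j)u_g^*\neq\gamma_g(s_j)$ in general (for instance the flip of the two generators of $\cO_2$ is an outer automorphism). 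Indeed, faithful quasi-free actions on $\cO_\infty$ are outer --- the paper uses this fact repeatedly (e.g.\ in the proofs of Theorem \ref{GLP} and Theorem \ref{splitting}) --- so no genuine implementing unitary representation $g\mapsto u_g$ with $\Ad u_g=\gamma_g$ can exist, and your conclusion that the action is ``genuinely representable'' contradicts outerness. The same error undermines the earlier part of your argument, where you assert that a quasi-free action whose representation contains $\lambda_G$ is implemented by $\sum_{h}s_{gh}s_h^*$.

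This is why the actual proof is substantially more involved. The paper's model is not the quasi-free action itself but the infinite tensor product $(A,\alpha)=\bigotimes_{k}(\cO_\infty,\Ad u_g)$ of \emph{inner} actions built from a $K$-trivial embedding of $C^*(G)$, which is shown to be asymptotically representable by an explicit homotopy of representations (Lemma \ref{asymp2}). The real work is then to prove that a faithful quasi-free action $\gamma$ is conjugate to this model: one constructs a finite-sum unitary representation $v_g=\sum_{h}s_{gh}q_0s_h^*+1-q_1$ (compressed by a projection so that it genuinely lies in $\cO_\infty$, and satisfying only $\gamma_g(v_h)=v_{ghg^{-1}}$, not $\Ad v_g=\gamma_g$), shows by a $K_0$-computation in the crossed product that the associated cocycle is a coboundary, obtains a $K$-trivial embedding of $C^*(G)$ into $\cO_\infty^\gamma$, and hence an equivariant unital embedding of $(\cO_\infty,\Ad u_\cdot)$ into $(\cO_\infty,\gamma)$; conjugacy then follows from the intertwining argument using Theorems \ref{GE}, \ref{GLP}, \ref{splitting} and Lemma \ref{unique2} (which itself relies on \cite[Theorem 4.8]{IM}). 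None of this can be bypassed by the inner-implementation shortcut you propose.
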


It is unlikely that one could show Theorem \ref{asymp} directly from the definition of quasi-free actions. 
Our proof uses the intertwining argument between two model actions; one is obviously quasi-free, and the other is an 
infinite tensor product action, that can be shown to be asymptotically representable. 

We first introduce the notion of $K$-trivial embeddings of the group $C^*$-algebra. 
We denote by $\{\lambda_g\}_{g\in G}$ the left regular representation of a finite group $G$. 
The group $C^*$-algebra $C^*(G)$ is the linear span of $\{\lambda_g\}_{g\in G}$. 

\begin{definition} Let $G$ be a finite group, and let $A$ be a unital $C^*$-algebra. 
An unital injective homomorphism $\rho:C^*(G)\rightarrow A$ is said to be a \textit{$K$-trivial embedding} if 
$KK(\rho)=KK(C^*(G)\ni \lambda_g\mapsto 1\in A)$. 
\end{definition}

For each irreducible representation $(\pi,H_\pi)$ of $G$, we choose an orthonormal basis $\{\xi(\pi)_i\}_{i=1}^{n_\pi}$ of 
$H_\pi$, where $n_\pi=\dim \pi$. 
We set $\pi(g)_{ij}=\inpr{\pi(g)\xi(\pi)_i}{\xi(\pi)_j}$, and  
$$e(\pi)_{ij}=\frac{n_\pi}{\# G}\sum_{g\in G}\overline{\pi(g)_{ij}}\lambda_g.$$
Then $\{e(\pi)_{ij}\}_{1\leq i,j\leq n_\pi}$ is a system of matrix units, and we have 
$$\lambda_g=\sum_{\pi\in \hat{G}}\sum_{i,j=1}^{n_\pi}
\pi(g)_{ij}e(\pi)_{ij}. $$ 
Let $C^*(G)_{\pi}$ be the linear span of $\{e(\pi)_{ij}\}_{i,j=1}^{\dim\pi}$. 
Then $C^*(G)_{\pi}$ is isomorphic to the matrix algebra $M_{n_\pi}(\C)$, and $C^*(G)$ has 
the direct sum decomposition 
$$C^*(G)=\bigoplus_{\pi\in \hat{G}}C^*(G)_\pi.$$ 
Let $\chi_\pi(g)=\Tr(\pi(g))$ be the character of $\pi$. 
Then 
$$z(\pi)=\frac{n_\pi}{\#G}\sum_{g\in G}\overline{\chi_\pi(g)}\lambda_g=\sum_{i=1}^{n_\pi}e(\pi)_{ii}$$
is the unit of $C^*(G)_\pi$. 

It is easy to show the following lemma:

\begin{lemma}\label{Kte} Let $G$ be a finite group, and let $A,B$ be unital simple purely infinite $C^*$-algebras. 
\begin{itemize}
\item [(1)] A unital injective homomorphism $\rho:C^*(G)\rightarrow A$ is a $K$-trivial embedding if and only if 
$[\rho(e(\pi)_{11})]=0$ in $K_0(A)$ for any nontrivial irreducible representation $\pi$. 
When $K_0(A)$ is torsion free, it is further equivalent to the condition that $[\rho(z(\pi))]=0$ in $K_0(A)$ for 
any nontrivial irreducible representation $\pi$. 
\item [(2)] Any two $K$-trivial unital embeddings of $C^*(G)$ into $A$ are unitarily equivalent. 
\item [(3)] If $\rho:C^*(G)\rightarrow A$ and $\sigma:C^*(G)\rightarrow B$ are $K$-trivial embeddings, 
so is the tensor product embedding $C^*(G)\ni \lambda_g\mapsto \rho(\lambda_g)\otimes \sigma(\lambda_g)\in A\otimes B$. 
\end{itemize}
\end{lemma}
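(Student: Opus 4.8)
The plan is to verify the three statements of Lemma \ref{Kte} by reducing everything to facts about $K_0$ of unital Kirchberg algebras and the structure of $C^*(G)=\bigoplus_{\pi\in\hat G}M_{n_\pi}(\C)$. For (1), I would use the direct sum decomposition to compute $KK(C^*(G),A)$ summand by summand. For a single matrix summand $M_{n_\pi}(\C)$, a unital homomorphism into $A$ is classified up to $KK$ by the $K_0$-class of the image of a minimal projection, namely $[\rho(e(\pi)_{11})]\in K_0(A)$, while the ``collapsing'' map $\lambda_g\mapsto 1$ restricted to $C^*(G)_\pi$ sends $e(\pi)_{ii}\mapsto \frac{1}{n_\pi}\chi_\pi(e)\cdots$ — more precisely, under $\lambda_g\mapsto 1$ the idempotent $e(\pi)_{ij}$ maps to $\frac{n_\pi}{\#G}\sum_g\overline{\pi(g)_{ij}}\,1=\delta_{ij}\cdot 0$ for nontrivial $\pi$ (by column orthogonality of matrix coefficients against the constant function) and to $1$ for the trivial representation. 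Hence $KK(\rho)$ agrees with the collapsing map exactly when $[\rho(e(\pi)_{11})]=0$ for every nontrivial $\pi$; summing a minimal projection $n_\pi$ times gives the reformulation in terms of $z(\pi)$ once $K_0(A)$ is torsion free, since $n_\pi[\rho(e(\pi)_{11})]=[\rho(z(\pi))]$ and multiplication by $n_\pi$ is injective.

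For (2), given two $K$-trivial unital embeddings $\rho,\sigma:C^*(G)\to A$, I would invoke the classification of homomorphisms out of a finite direct sum of matrix algebras into a unital Kirchberg algebra: two unital homomorphisms are approximately unitarily equivalent iff they induce the same map on $K_0$, and for homomorphisms with finite-dimensional domain approximate unitary equivalence upgrades to honest unitary equivalence (the images generate finite-dimensional subalgebras, so one can perturb an approximate intertwiner to an exact one). By part (1) the $K_0$-data of $\rho$ and $\sigma$ coincide — both send each minimal projection of the nontrivial summands to a class killed in $K_0(A)$ and must send the minimal projection of the trivial summand to $[1]$ — so the relevant $K_0$-maps agree and unitary equivalence follows. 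I would cite the appropriate uniqueness result for Kirchberg algebras (e.g. the relevant statement in \cite{R01}) rather than reprove it.

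For (3), I would compute $K_0$ of the tensor-product embedding. Writing $\tau(\lambda_g)=\rho(\lambda_g)\otimes\sigma(\lambda_g)$, one checks directly that $\tau(e(\pi)_{ij})$ lies in $\rho(C^*(G))\otimes\sigma(C^*(G))$ and, using $\lambda_g=\sum_{\pi,i,j}\pi(g)_{ij}e(\pi)_{ij}$ together with the fusion rules for tensor products of representations, that $\tau(z(\pi))$ is a sum of projections of the form $\rho(z(\sigma_1))\otimes\sigma(z(\sigma_2))$ with $\sigma_1\otimes\sigma_2\supseteq\pi$; alternatively, and more cleanly, I would argue at the level of $KK$: the tensor-product embedding factors as $C^*(G)\xrightarrow{\Delta}C^*(G)\otimes C^*(G)\xrightarrow{\rho\otimes\sigma}A\otimes B$ where $\Delta$ is the comultiplication $\lambda_g\mapsto\lambda_g\otimes\lambda_g$, and since $KK(\rho)$ and $KK(\sigma)$ equal the collapsing maps, $KK(\rho\otimes\sigma)$ equals $KK(\epsilon\otimes\epsilon)$ where $\epsilon:C^*(G)\to\C$ is the trivial representation (counit), whence $KK(\tau)=KK((\epsilon\otimes\epsilon)\circ\Delta\text{ followed by }1\mapsto 1)=KK(\epsilon\text{-collapse})$ because $(\epsilon\otimes\epsilon)\circ\Delta=\epsilon$. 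This is precisely the $K$-triviality of $\tau$.

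I expect the only mildly delicate point to be the upgrade from approximate to exact unitary equivalence in (2), which is where the finite-dimensionality of $C^*(G)$ is essential; everything else is bookkeeping with the matrix-coefficient orthogonality relations and the classification theory already invoked in the paper. Since the authors announce this as ``easy to show'', a proof at the level of these indications should suffice, and I would keep the written proof to the two computations (minimal-projection classes, counit factorization) plus a one-line citation for the Kirchberg-algebra uniqueness used in (2).
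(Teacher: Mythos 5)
The paper gives no proof of this lemma (it is announced as ``easy to show''), so there is nothing to compare line by line; your argument is correct in substance and is surely the intended one: decompose $C^*(G)=\bigoplus_{\pi}C^*(G)_\pi$, identify $KK(C^*(G),A)\cong\bigoplus_\pi K_0(A)$ via the classes of the images of minimal projections, observe that the collapse $\lambda_g\mapsto 1$ annihilates $e(\pi)_{ij}$ for nontrivial $\pi$ by orthogonality of matrix coefficients against the trivial character, and, for (3), factor the diagonal embedding through the comultiplication $\Delta$ and use that the external Kasparov product depends only on the $KK$-classes together with $(\epsilon\otimes\epsilon)\circ\Delta=\epsilon$. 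Two points deserve adjustment. In (1), $K$-triviality also constrains the trivial summand; you should say explicitly that $[\rho(e(\mathrm{triv})_{11})]=[1]$ is automatic from unitality, since $[1]=\sum_\pi n_\pi[\rho(e(\pi)_{11})]$ (you use this tacitly in (2) but not in (1)). In (2), note that $A$ is only assumed unital, simple and purely infinite --- not a Kirchberg algebra, so no separability or nuclearity is available and the classification theorem you cite does not literally apply; but nothing of that strength is needed: since nonzero projections with the same $K_0$-class in a unital purely infinite simple algebra are Murray--von Neumann equivalent, one may choose partial isometries $v_\pi$ with $v_\pi^*v_\pi=\rho(e(\pi)_{11})$ and $v_\pi v_\pi^*=\sigma(e(\pi)_{11})$ and check that $u=\sum_\pi\sum_{i=1}^{n_\pi}\sigma(e(\pi)_{i1})\,v_\pi\,\rho(e(\pi)_{1i})$ is a unitary (by unitality of $\rho$ and $\sigma$) with $u\rho(x)u^*=\sigma(x)$; this gives exact unitary equivalence directly and makes your approximate-to-exact perturbation step unnecessary.
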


We now construct a $K$-trivial embedding of $C^*(G)$ into $\cO_\infty$. 
We fix a nonzero projection $p\in \cO_\infty$ with $[p]=0$ in $K_0(\cO_\infty)$, and fix unital embeddings 
$$B(\ell^2(G))\subset \cO_2\subset p\cO_\infty p.$$
We denote by $\sigma_0:C^*(G)\rightarrow p\cO_\infty p$ the resulting embedding, and set $u_g=\sigma_0(\lambda_g)+1-p$. 
Then $\sigma:C^*(G)\ni \lambda_g\mapsto u_g\in \cO_\infty$ is a $K$-trivial embedding of $C^*(G)$ into $\cO_\infty$. 

Using $\{u_g\}_{g\in G}$, we introduce a $G$-$C^*$-algebra $(A,\alpha)$ by 
$$(A,\alpha_g)=\bigotimes_{k=1}^\infty(\cO_\infty, \Ad u_g).$$ 
More precisely, we set 
$$A_n=\bigotimes_{k=1}^n\cO_\infty,\quad u^{(n)}_g=\bigotimes_{k=1}^n u_g,$$ 
and $\alpha_g^{(n)}=\Ad u^{(n)}_g$. 
Then $(A,\alpha)$ is the inductive limit of the system $\{(A_n,\alpha^{(n)})\}_{n=1}^\infty$ with the embedding 
$\iota_n:A_n\ni x\mapsto x\otimes 1\in A_{n+1}$.  
The $C^*$-algebra $A$ is isomorphic to $\cO_\infty$, and the action $\alpha$ is outer. 

\begin{lemma}\label{asymp2} Let the notation be as above. 
\begin{itemize} 
\item [(1)] The action $\alpha$ is asymptotically representable. 
\item [(2)] The embedding $\iota_\alpha:C^*(G)\ni \lambda_g\mapsto \lambda^\alpha_g\in A\rtimes_\alpha G$ gives $KK$-equivalence. 
\end{itemize}
\end{lemma}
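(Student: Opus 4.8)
The plan is to prove the two assertions of Lemma \ref{asymp2} separately, since they are of rather different flavors.

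\medskip

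\textbf{Part (1): asymptotic representability of $\alpha$.}
First I would observe that each $u^{(n)}_g$ is by construction a unitary in $A_n\subset A$ implementing $\alpha_g^{(n)}$, and that the family $\{u^{(n)}_g\}_{g\in G}$ is an \emph{exact} representation of $G$: since $\sigma$ is a homomorphism, $u_gu_h=u_{gh}$, hence $u^{(n)}_g u^{(n)}_h=u^{(n)}_{gh}$ for all $n$. The only failure from being a genuine cocycle implementation of the limit action $\alpha$ is that $u^{(n)}_g$ implements $\alpha_g$ only on the subalgebra $A_n$, not on all of $A$; on $A_n$ it is exact. The standard reindexing trick then produces the asymptotic family: for $t\in[n,n+1]$ interpolate (norm-continuously, using that $U(A_n)$ is connected and path-connected within $U(A)$, or simply using a Whitehead-type lemma in the purely infinite simple algebra $A\cong\cO_\infty$) between $u^{(n)}_g$ and $u^{(n+1)}_g$; call the result $u_g(t)$. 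Because $\bigcup_n A_n$ is dense in $A$, for every $x\in A$ and $\varepsilon>0$ we can find $n$ and $x'\in A_n$ with $\|x-x'\|<\varepsilon$, and then for $t\ge n$ we have $\|u_g(t)x u_g(t)^*-\alpha_g(x)\|\le 2\varepsilon+\|u_g(t)x'u_g(t)^*-\alpha_g(x')\|$; the second term needs to be controlled along the interpolation path, which is where one wants to be a little careful. The multiplicativity relation $\|u_g(t)u_h(t)-u_{gh}(t)\|\to 0$ and the equivariance relation $\|\alpha_g(u_h(t))-u_{ghg^{-1}}(t)\|\to 0$ hold on the nose at the integer points and must be propagated through the interpolation; since $G$ is finite one may choose the interpolating paths compatibly for all $g$ at once (e.g. first fix paths for a generating set and define the rest by the group law, accepting an error that tends to $0$). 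I expect the \emph{main obstacle} here to be the bookkeeping of the interpolation: making the continuous family simultaneously satisfy all three limit conditions requires choosing the homotopies in $U(A_{n+1})$ carefully and estimating how $\Ad u_g(t)$ moves elements of $A_n$ when $u_g(t)$ lies on the path between $u^{(n)}_g\otimes 1$ and $u^{(n)}_g\otimes u_g$; one should exploit that $u^{(n)}_g\otimes u_g$ and $u^{(n)}_g\otimes 1$ differ only in the $(n{+}1)$-st tensor factor, so the path $u^{(n)}_g\otimes w_g(s)$ with $w_g(s)$ a path from $1$ to $u_g$ in $U(\cO_\infty)$ conjugates $A_n\otimes 1$ trivially — this essentially kills the estimate.

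\medskip

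\textbf{Part (2): $\iota_\alpha$ is a $KK$-equivalence.}
I would first reduce to the finite stages. For the $n$-th model $(A_n,\alpha^{(n)})=(\bigotimes_{k=1}^n\cO_\infty,\bigotimes_{k=1}^n\Ad u_g)$, the action $\alpha^{(n)}=\Ad u^{(n)}$ is \emph{inner}, so the crossed product splits: $A_n\rtimes_{\alpha^{(n)}}G\cong A_n\otimes C^*(G)$ via $a\mapsto a\otimes 1$, $\lambda^{\alpha^{(n)}}_g\mapsto (u^{(n)}_g)^*\otimes\lambda_g$ (the usual untwisting isomorphism for an inner action). Under this identification the embedding $\iota_{\alpha^{(n)}}:C^*(G)\ni\lambda_g\mapsto\lambda^{\alpha^{(n)}}_g$ becomes $\lambda_g\mapsto (u^{(n)}_g)^*\otimes\lambda_g$, which in $KK$ equals the class of $\lambda_g\mapsto (u^{(n)}_g)^*$ tensored appropriately; but $u^{(n)}_g=\bigotimes_{k=1}^n u_g$ and $\sigma$ was arranged to be a $K$-trivial embedding of $C^*(G)$ into $\cO_\infty$, so by Lemma \ref{Kte}(3) the $n$-fold tensor embedding $\lambda_g\mapsto u^{(n)}_g$ is again $K$-trivial, hence $KK(C^*(G)\ni\lambda_g\mapsto u^{(n)}_g\in A_n)=KK(C^*(G)\ni\lambda_g\mapsto 1\in A_n)$. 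Combining, $\iota_{\alpha^{(n)}}$ induces, in $KK$, the same map as $C^*(G)\ni\lambda_g\mapsto 1\otimes\lambda_g\in A_n\otimes C^*(G)$, i.e. $\mathrm{id}_{C^*(G)}$ tensored with the unital embedding $\C\to A_n$; since the latter is a $KK$-equivalence ($A_n\cong\cO_\infty$ is $KK$-equivalent to $\C$), $\iota_{\alpha^{(n)}}$ is a $KK$-equivalence, and moreover its class is the ``constant'' one $\tau_n\colon KK(C^*(G),C^*(G))\to KK(C^*(G),A_n\rtimes G)$ induced by the canonical $KK$-equivalence $C^*(G)\simeq C^*(G)\otimes\C\to C^*(G)\otimes A_n$. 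Finally I would pass to the limit: $A\rtimes_\alpha G=\varinjlim (A_n\rtimes_{\alpha^{(n)}}G)$ with connecting maps $\iota_n\rtimes\mathrm{id}$, these connecting maps are the inclusions $A_n\otimes C^*(G)\hookrightarrow A_{n+1}\otimes C^*(G)$ under the untwisting (here one must check the untwisting isomorphisms are compatible, which they are because $u^{(n+1)}_g=u^{(n)}_g\otimes u_g$), and they intertwine the $\iota_{\alpha^{(n)}}$; since $KK$ is continuous with respect to such inductive limits (the $C^*$-algebras are separable and the limit is countable), the limit map $\iota_\alpha$ is a $KK$-equivalence, indeed its $KK$-class is $KK(\mathrm{id}_{C^*(G)})\hotimes_{C^*(G)} KK(C^*(G)\ni\lambda_g\mapsto 1\in A)$ as required by the definition of $K$-triviality made compatible with the crossed product. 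The \emph{main obstacle} in Part (2) is verifying the compatibility of the untwisting isomorphisms with the connecting maps $\iota_n\rtimes\mathrm{id}_G$ and with the embeddings $\iota_{\alpha^{(n)}}$ simultaneously, so that one really obtains a map of inductive systems and may invoke continuity of $KK$; everything else is a direct consequence of Lemma \ref{Kte} and the standard inner-action untwisting.
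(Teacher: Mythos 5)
Your Part (2) is essentially the paper's argument: identify $B_n=A_n\rtimes_{\alpha^{(n)}}G$ with $A_n\otimes C^*(G)$ by untwisting the inner action, use Lemma \ref{Kte},(3) to see that $g\mapsto u^{(n)}_g$ is a $K$-trivial embedding, conclude that each $\iota^{(n)}_\alpha$ induces isomorphisms of $K$-groups, check compatibility with the connecting maps, and pass to the limit (the paper is just as brief about the final ``$K$-isomorphism plus UCT gives $KK$-equivalence'' step). Two corrections: the untwisting must send $\lambda^{\alpha^{(n)}}_g\mapsto u^{(n)}_g\otimes\lambda_g$, not $(u^{(n)}_g)^*\otimes\lambda_g$ --- the latter is not even multiplicative for nonabelian $G$, since $g\mapsto u_g^*$ is an anti-representation --- and ``continuity of $KK$'' should be replaced by continuity of $K$-theory (or the observation that $KK(C^*(G),-)$ is a finite direct sum of $K$-groups, $C^*(G)$ being finite dimensional).

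Part (1) has a genuine gap, and it sits exactly where you wave it off as bookkeeping. Your interpolation on $[n,n+1]$ is $u_g(t)=u^{(n)}_g\otimes w_g(s)$ with $w_g(s)$ a path in $U(\cO_\infty)$ from $1$ to $u_g$. This does make $\Ad u_g(t)$ agree with $\alpha_g$ on $A_n$, but the other two requirements become $\|w_g(s)w_h(s)-w_{gh}(s)\|\to0$ and $\|u_gw_h(s)u_g^*-w_{ghg^{-1}}(s)\|\to0$; since the same path is reused on every interval, these defects do not decay as $t\to\infty$ --- they would have to vanish identically, i.e.\ $\{w_g(s)\}_{g\in G}$ would have to be a continuous path of genuine representations of $G$ joining the trivial representation to $g\mapsto u_g$. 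No such path exists: a path of representations is a path of unital homomorphisms $C^*(G)\to\cO_\infty$, and the images of the central projections $z(\pi)$, $\pi$ nontrivial, move norm-continuously, so they cannot pass from $0$ (trivial representation) to a nonzero projection ($\lambda_g\mapsto u_g$ is injective on $C^*(G)$); ``fixing paths on generators and extending by the group law'' leaves a defect bounded away from $0$, and even $n$-dependent paths with shrinking defects are ruled out, since a standard perturbation of almost-representations of the finite group $G$ would then yield an exact path and contradict the same obstruction. The paper circumvents this with a different device, which is the idea missing from your write-up: inside \emph{three} tensor factors it builds an exact, exactly $\alpha^{(3)}$-equivariant homotopy of genuine representations $v_g(t)$ from $u_g\otimes1\otimes1$ to $u_g\otimes u_g\otimes1$ (both injective, so no kernel obstruction), namely $v_g(t)=w(t)(u_g\otimes1\otimes1)w(t)^*$ with $w(t)$ a path in $U(A_3^G)$ from $1$ to a $G$-invariant unitary $w=(w_1\otimes1)(1\otimes w_2^*)$; here $w_1,w_2$ exist because $u\otimes1$, $1\otimes u$, $u\otimes u$ are all $K$-trivial embeddings of $C^*(G)$ into $A_2$, hence mutually unitarily equivalent by Lemma \ref{Kte},(2), and $U(A_3^G)$ is connected because $A_3^G$ is a finite direct sum of algebras Morita equivalent to $\cO_\infty$. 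Splicing these homotopies along the infinite tensor product gives a family $u_g(t)$ for which all three conditions hold \emph{exactly} for large $t$ on the dense union $\bigcup_nA_n$; the naive one-factor interpolation cannot be repaired.
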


\begin{proof} (1) It suffices to construct a homotopy $\{v_g(t)\}_{t\in [0,1]}$ of unitary representations of 
$G$ in $A_3$ satisfying $v_g(0)=u_g\otimes 1\otimes 1$, $v_g(1)=u^{(2)}_g\otimes 1$, and $\alpha^{(3)}_g(v_h(t))=v_{ghg^{-1}}(t)$. 
Since $\{u_g\otimes 1\}_{g\in G}$, $\{u^{(2)}_g\}_{g\in G}$, and $\{1\otimes u_g\}_{g\in G}$ 
give $K$-trivial embeddings of $C^*(G)$ into $A_2$, there exist 
unitaries $w_1,w_2\in U(A_2)$ satisfying $w_1(u_g\otimes 1)w_1^*=w_2(1\otimes u_g)w_2^*=u^{(2)}_g$. 
Let $w=(w_1\otimes 1)(1\otimes w_2^*)$, which is a unitary in $A_3^G=A_3\cap\{u^{(3)}_g\}_{g\in G}'$ satisfying 
$w(u_g\otimes 1\otimes 1)w^*=u^{(2)}_g\otimes 1$. 
Since $A_3^G$ is isomorphic to a finite direct sum of $C^*$-algebras Morita equivalent to $\cO_\infty$, 
there exists a homotopy $\{w(t)\}_{t\in [0,1]}$ in $U(A_3^G)$ with $w(0)=1$ and $w(1)=w$. 
Thus $v_g(t)=w(t)(u_g\otimes 1\otimes 1)w(t)^*$ gives the desired homotopy. 

(2) We identify $B_n=A_n\rtimes_{\alpha^{(n)}}G$ with the $C^*$-subalgebra of $A\rtimes_\alpha G$ generated by $A_n$ and 
$\{\lambda^\alpha_g\}_{g\in G}$, and we denote by $\iota'_n:B_n\rightarrow B_{n+1}$ the embedding map. 
Then $A\rtimes_\alpha G$ is the inductive limit of the system $\{B_n\}_{n=1}^\infty$. 
Let $\iota_\alpha^{(n)}:C^*(G)\ni \lambda_g \mapsto \lambda^\alpha_g\in B_n$. 
Since we have $\iota'_{n}\circ \iota^{(n)}_\alpha=\iota^{(n+1)}_\alpha$, in order to prove the statement it suffices to show that 
$\iota^{(n)}_\alpha$ induces isomorphisms of the $K$-groups for every $n$. 

Since $\alpha^{(n)}$ is inner, there exists an isomorphism $\theta_n:B_n\rightarrow A_n\otimes C^*(G)$ given by 
$\theta_n(a)=a\otimes 1$ for $a\in A_n$ and $\theta_n(\lambda^\alpha_g)={u^{(n)}_g}\otimes \lambda_g$. 
Thus all we have to show is that the map $\theta_n\circ \iota^{(n)}_\alpha:C^*(G)\ni \lambda_g\mapsto 
u^{(n)}_g\otimes \lambda_g\in A_n\otimes C^*(G)$ induces isomorphisms of the $K$-groups. 
This follows from that fact that $A_n$ is isomorphic to $\cO_\infty$ and $\{u^{(n)}_g\}_{g\in G}$ gives an 
$K$-trivial embedding of $C^*(G)$ into $A_n$. 
\end{proof}

\begin{lemma}\label{unique2} For the $G$-$C^*$-algebra $(A,\alpha)$ as constructed above, any unital $\varphi\in \Hom_G(A,A)$ is 
$G$-asymptotically unitarily equivalent to $\id$. 
\end{lemma}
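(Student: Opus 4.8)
\textbf{Proof proposal for Lemma \ref{unique2}.} The plan is to reduce the statement to an $\cO_\infty$-equivariant uniqueness theorem via a Kirchberg--Phillips-type intertwining between $\varphi$ and $\id$, using $KK_G$-triviality of $\varphi$ and the asymptotic representability established in Lemma \ref{asymp2}. First I would observe that, since $(A,\alpha)$ is by construction an infinite tensor product of copies of $(\cO_\infty,\Ad u_g)$, we have $(A,\alpha)\cong (A\otimes A,\alpha\otimes \alpha)$ $G$-equivariantly, and more to the point $(A,\alpha)\cong(A\otimes B,\alpha\otimes\beta)$ for $(B,\beta)$ the infinite tensor product of quasi-free actions appearing in Theorem \ref{splitting}; combined with Theorem \ref{splitting} this already shows $(A,\alpha)$ is $G$-equivariantly an $\cO_\infty$-type absorbing object. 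Thus a unital $\varphi\in\Hom_G(A,A)$ can be analysed by the ``Evans--Kishimoto'' type intertwining: it suffices to produce, for each finite subset $F\subset A$ and $\varepsilon>0$, a unitary $u\in A^G$ with $\|u\varphi(x)u^*-x\|<\varepsilon$ and with control of the increments of the unitaries so that they can be patched into a continuous path implementing a $G$-asymptotic equivalence.

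The key input is a one-sided intertwining half-step, which I would extract from Theorem \ref{GLP}. Concretely, let $H$ be the kernel of $\alpha$; since $\alpha$ is quasi-free (hence, as in the proof of Theorem \ref{GLP}, $\alpha_\omega$ is outer as a $G/H$-action and $(A_\omega)^G$ is purely infinite and simple), both the inclusion $\iota_\omega:A\hookrightarrow A^\omega$ composed with any unital $G$-embedding of $A$ into $A_\omega$ and the map $\varphi^\omega\circ\iota_\omega$ are unital $G$-homomorphisms $\cO_\infty\cong A\to A^\omega$ in the sense of Theorem \ref{GLP}. More precisely, since $A\cong A\otimes A$ $G$-equivariantly, I would regard $\varphi$ and $\id$ as unital $G$-homomorphisms from $(\cO_\infty,\gamma)\cong(A,\alpha)$ into $(A,\alpha)$, where $A$ itself is purely infinite and simple and $\alpha$ is outer, and apply Theorem \ref{GLP} directly: $\varphi$ and $\id$ are $G$-approximately unitarily equivalent, i.e. there is a sequence of unitaries $u_n\in U(A^G)$ with $\|u_n\varphi(x)u_n^*-x\|\to 0$ for all $x\in A$. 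This gives the approximate statement immediately; the remaining work is to upgrade ``approximately'' to ``asymptotically''.

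To get the asymptotic (continuous-path) version I would run the standard Kirchberg--Phillips one-sided intertwining argument in the $G$-equivariant setting: build inductively unital $G$-homomorphisms and unitaries $w_k\in U(A^G)$ together with an increasing exhaustion $F_1\subset F_2\subset\cdots$ of $A$ such that $\Ad(w_k\cdots w_1)\circ\varphi$ is close to $\id$ on $F_k$ and $\|w_k-1\|<2^{-k}$ on the relevant finite sets, using at each stage the approximate $G$-unitary equivalence from Theorem \ref{GLP} together with a Rohlin-type stability (the same mechanism that makes the equivalence of (1)--(3) before the proof of Theorem \ref{GLP} work) to absorb the correction unitaries into a small perturbation. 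Because the increments $w_k$ can be taken close to $1$, the partial products $w_k\cdots w_1$ converge, after reparametrisation, to a norm-continuous path $\{u(t)\}_{t\ge0}$ in $U(A^G)\subset U(M(A)^G)$ with $\lim_{t\to\infty}\|u(t)\varphi(x)u(t)^*-x\|=0$ for all $x\in A$, which is exactly $G$-asymptotic unitary equivalence of $\varphi$ and $\id$.

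The main obstacle, I expect, is the bookkeeping that makes the correction unitaries $w_k$ genuinely small: Theorem \ref{GLP} only gives $G$-approximate unitary equivalence, not a quantitative version with controlled increments, so one must re-derive the half-step with the extra smallness built in. This is where the purely infinite simplicity of $(A_\omega)^G$ (equivalently, the outerness of $\alpha_\omega$ as a $G/H$-action, coming from quasi-freeness) is used: it lets one conjugate a given approximate-intertwiner unitary to one that is close to $1$ on a prescribed finite set, by a Rohlin-tower / Berg-technique style argument inside $A^G$. Once this quantitative half-step is in hand, the convergence of the path is routine. I would present the argument by stating the quantitative half-step as the technical core and then invoking the familiar intertwining machinery for the conclusion.
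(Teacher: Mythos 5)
Your proposal has two genuine gaps. The first is circularity in the reduction to Theorem \ref{GLP}: that theorem requires the \emph{domain} action to be quasi-free, whereas the action $\alpha$ on $A=\bigotimes_{k=1}^\infty(\cO_\infty,\Ad u_g)$ is a product of inner actions and is not known, at the point where Lemma \ref{unique2} is proved, to be conjugate to a quasi-free action. The identification $(A,\alpha)\cong(\cO_\infty,\gamma)$ with $\gamma$ quasi-free is exactly the content of Theorem \ref{asymp}, whose proof uses Lemma \ref{unique2}; so you may not assume it here. (Theorem \ref{splitting} does give $(A,\alpha)\cong(A\otimes B,\alpha\otimes\beta)$, but tensorial absorption does not make $\alpha$ quasi-free, and your use of $\varphi^\omega\circ\iota_\omega$ ``in the sense of Theorem \ref{GLP}'' suffers from the same problem.) The second, more serious, gap is the upgrade from $G$-approximate to $G$-asymptotic unitary equivalence. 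This is not a routine intertwining: approximate unitary equivalence does not in general imply asymptotic unitary equivalence (for endomorphisms of Kirchberg algebras the obstruction is $KK$-theoretic, roughly the kernel of $KK\to KL$), and your ``quantitative half-step'' with increments $w_k$ close to $1$ is asserted rather than proved. Pure infiniteness of $(A_\omega)^G$ does not by itself allow you to replace an approximate intertwiner by one close to $1$ on a prescribed finite set; the difficulty is a $K_1$/basepoint obstruction, not one that Rohlin towers or a Berg-type argument resolve. You name this as the main obstacle, but the proposal supplies no mechanism for it, and you never actually use Lemma \ref{asymp2}, which you announce at the outset as the key input.

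The paper's proof takes a different and shorter route through the crossed product: extend $\varphi$ to $\tilde\varphi\in\End(A\rtimes_\alpha G)$ with $\tilde\varphi(\lambda^\alpha_g)=\lambda^\alpha_g$, note that $\tilde\varphi$ commutes with the dual coaction, deduce $KK(\tilde\varphi)=KK(\id)$ from Lemma \ref{asymp2},(2), and then invoke \cite[Theorem 4.8]{IM} together with the asymptotic representability of $\alpha$ (Lemma \ref{asymp2},(1)) to produce a norm-continuous path of unitaries $\{u(t)\}$ in $A$ asymptotically implementing $\tilde\varphi$ on the crossed product. Evaluating at $x=\lambda^\alpha_g$ shows $\alpha_g(u(t))-u(t)\to 0$, and averaging with the conditional expectation onto $A^G$ perturbs the path into $U(A^G)$, giving the $G$-asymptotic equivalence. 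If you wish to rescue your approach, the missing ingredient is precisely such an asymptotic (not merely approximate) uniqueness theorem with the equivariance built in; without it, the intertwining scheme does not close.
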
 

\begin{proof} 
Let $B=A\rtimes_\alpha G$, and let $\halpha:B\rightarrow B\otimes C^*(G)$ be the dual coaction of $\alpha$. 
Then $\varphi$ extends to a unital endomorphism $\tilde{\varphi}$ in $\End(B)$ with $\tilde{\varphi}(\lambda^\alpha_g)=\lambda^\alpha_g$, 
which satisfies $\halpha\circ \tilde{\varphi}=(\tilde{\varphi}\otimes \id_{C^*(G)})\circ \halpha$. 
By Lemma \ref{asymp2},(2), we have $KK(\tilde{\varphi})=KK(\id_B)$. 
Thus Lemma \ref{asymp2},(1) and \cite[Theorem 4.8]{IM} imply that 
there exists a continuous family of unitaries $\{u(t)\}_{t\geq 0}$ in $A$ satisfying
$$\lim_{t\to \infty}\|u(t)xu(t)^*-\tilde{\varphi}(x)\|=0,\quad \forall x\in B.$$
Setting $x=\lambda^\alpha_g$, we know that $\{\alpha_g(u(t))-u(t)\}_{t\geq 0}$ converges to 1. 
Since $G$ is a finite group, there exists a conditional expectation from $A$ onto $A^G$, and we can construct 
a continuous family of unitaries $\{\tilde{u}(t)\}_{t\geq 0}$ in $A^G$ 
such that $\{u(t)-\tilde{u}(t)\}_{t\geq 0}$ converges to 0 by a standard perturbation argument. 
Therefore $\varphi$ and $\id$ are $G$-asymptotically unitarily equivalent.  
\end{proof}

\begin{proof}[Proof of Theorem \ref{asymp}] Let $\gamma$ be a faithful quasi-free $G$-action on $\cO_\infty$. 
Thanks to Corollary \ref{unique}, we may assume that $\cO_\infty$ has the canonical generators $\{s_i\}_{i\in J}$ 
with $G\subset J$ satisfying $\gamma_g(s_h)=s_{gh}$. 
Since $\alpha$ is asymptotically representable, it suffices to show that $\alpha$ and $\gamma$ are conjugate. 
Thanks to Theorem \ref{splitting}, the action $\alpha$ is conjugate to $\alpha\otimes \gamma$, and so there exists a 
unital embedding of $(\cO_\infty,\gamma)$ into $(A,\alpha)$. 
Thus if there exists a unital embedding of $(A,\alpha)$ into $(\cO_\infty,\gamma)$,     
Theorem \ref{GE}, Theorem \ref{GLP}, and Lemma \ref{unique2} imply that $\alpha$ and $\gamma$ are conjugate. 
Since $\gamma$ is conjugate to the infinite tensor product of its copies thanks to Theorem \ref{splitting} again, 
all we have to show is that there exists a unital embedding of $(\cO_\infty,\Ad u_\cdot)$ into $(\cO_\infty,\gamma)$.  

We denote by $\cO_\infty^\gamma$ the fixed point subalgebra of $\cO_\infty$ under 
the $G$-action $\gamma$. 
Since $\cO_\infty^\gamma$ is purely infinite and simple, we can choose a nonzero projection $q_0\in \cO_\infty^G$ with $[q_0]=0$ 
in $K_0(\cO_\infty^\gamma)$. 
We set $q_1=\sum_{g\in G}s_gq_0s_g^*$. 
A similar argument as in the proof of Lemma \ref{Kcomputation2} implies that $[q_1]=0$ in $K_0(\cO_\infty^\gamma)$. 
We set 
$$v_g=\sum_{h\in G}s_{gh}q_0s_h^*+1-q_1.$$
Then $\{v_g\}_{g\in G}$ is a unitary representation of $G$ in $\cO_\infty$ satisfying $\gamma_g(v_h)=v_{ghg^{-1}}$, and so 
$\{v_g^*\}_{g\in G}$ is a $\gamma$-cocycle. 
We show that this is a coboundary by using \cite[Remark 2.6]{IDuke}. 
Indeed, we have 
\begin{eqnarray}\label{E1}
\lefteqn{\frac{1}{\# G}\sum_{g\in G}v_g^*\lambda^\gamma_g=(1-q_1)e_\gamma
+\frac{1}{\# G}\sum_{g\in G}\sum_{h\in G}s_hq_0s_{gh}^*\lambda^\gamma_g} \\
 &=&(1-q_1)e_\gamma+\frac{1}{\# G}\sum_{g\in G}\sum_{h\in G}s_hq_0\lambda^\gamma_gs_{h}^*
 =(1-q_1)e_\gamma+\sum_{h\in G}s_hq_0e_\gamma s_{h}^*.\nonumber
\end{eqnarray}
This means that the class of this projection in $K_0(\cO_\infty \rtimes_\gamma G)$ is 
$$[(1-q_1)e_\gamma]+\# G[q_0 e_\gamma]=[e_\gamma],$$  
which implies that $\{v_g^*\}_{g\in G}$ is a coboundary. 
Thus there exists a unitary $v\in \cO_\infty$ satisfying $v_g^*=v\gamma_g(v^*)$. 

We set $w_g=v^*v_gv$, and claim that $\{w_g\}_{g\in G}$ gives a $K$-trivial embedding of $C^*(G)$ into $\cO_\infty^\gamma$. 
Indeed, 
$$\gamma_g(w_h)=\gamma_g(v^*)\gamma_h(v_h)\gamma_g(v)=v^*v_g^*v_{ghg^{-1}}v_gv=w_h,$$
which shows $w_g\in \cO_\infty^\gamma$. 
Let $\rho:C^*(G)\ni\lambda_g\mapsto w_g\in \cO_\infty^\gamma$. 
Thanks to Lemma \ref{Kte},(1), in order to prove the claim 
it suffices to show that $[\rho(e(\pi)_{11})]=0$ in $K_0(\cO_\infty^\gamma)$ for any nontrivial 
irreducible representation $(\pi,H_\pi)$ of $G$. 
Indeed, we have 
\begin{eqnarray*}
\lefteqn{K_0(j_\gamma)([\rho(e(\pi)_{11})])
=[\frac{n_\pi}{{\# G}^2}\sum_{g,h\in G}\overline{\pi(h)_{11}}\lambda^\gamma_gw_h]
=[\frac{n_\pi}{{\# G}^2}\sum_{g,h\in G}\overline{\pi(h)_{11}}\lambda^\gamma_gv^*v_hv]}\\
 &=&[\frac{n_\pi}{{\# G}^2}\sum_{g,h\in G}\overline{\pi(h)_{11}}\gamma_g(v^*)v_{ghg^{-1}}\lambda^\gamma_gv]
 =[\frac{n_\pi}{{\# G}^2}\sum_{g,h\in G}\overline{\pi(h)_{11}}v^*v_g^*v_{ghg^{-1}}\lambda^\gamma_gv]\\
 &=&[\frac{n_\pi}{{\# G}^2}\sum_{g,h\in G}\overline{\pi(h)_{11}}v_{hg^{-1}}\lambda^\gamma_g]
 =[\frac{n_\pi}{{\# G}^2}\sum_{g,h\in G}\overline{\pi(h)_{11}}v_{h}v_g^*\lambda^\gamma_g].
 \end{eqnarray*}
Let $\rho_0:C^*(G)\ni\lambda_g\mapsto v_g\in \cO_\infty$.  
Equation (\ref{E1}) implies that this is equal to 
$$[\rho_0(e(\pi)_{11})\sum_{k\in G}s_kq_0e_\gamma s_k^*]=n_\pi[q_0e_\gamma]=0.$$
Thus the claim is shown. 

We choose a unital embedding $\mu_0: \cO_\infty \rightarrow \cO_\infty^\gamma$. 
Since both $\{\mu_0(u_g)\}_{g\in G}$ and $\{w_g\}_{g\in G}$ give $K$-trivial embeddings of $C^*(G)$ into 
$\cO_\infty^\gamma$, Lemma \ref{Kte},(2) shows that we may assume $\mu_0(u_g)=w_g$ by replacing $\mu_0$ if necessary.  
Let $\mu(x)=v\mu_0(x)v^*$. 
Then 
\begin{eqnarray*}
\gamma_g\circ \mu(x)&=&\gamma_g(v)\mu_0(x)\gamma_g(v^*)=v_gv\mu_0(x)v^*v_g^*=vw_g\mu_0(x)w_g^*v^*\\
&=&v\mu_0(u_gxu_g^*)v^*=\mu\circ \Ad u_g(x).
\end{eqnarray*}
Thus $\mu$ is the desired embedding of $(\cO_\infty,\Ad u_\cdot)$ into $(\cO_\infty,\gamma)$. 
\end{proof}

From Theorem \ref{asymp} and Lemma \ref{unique2}, we get 

\begin{corollary} Let $G$ be a finite group, and let $\gamma$ be a quasi-free action of $G$ on $\cO_\infty$. 
Then any unital $\varphi\in \Hom_G(\cO_\infty,\cO_\infty)$ is 
$G$-asymptotically unitarily equivalent to $\id$. 
\end{corollary}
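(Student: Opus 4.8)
The plan is to reduce the statement to Lemma~\ref{unique2}, which already handles the model action $\alpha=\bigotimes_{k=1}^\infty \Ad u_g$ constructed in Section~6. By Corollary~\ref{unique}, the given quasi-free action $\gamma$ on $\cO_\infty$ (which we may as well take to be faithful, the non-faithful case factoring through a faithful quasi-free action of the quotient group $G/\Ker\gamma$) is conjugate to $\alpha$: there is a $G$-isomorphism $\Phi\colon(\cO_\infty,\gamma)\to(A,\alpha)$. This conjugation transports the assertion for $\gamma$ to the assertion for $\alpha$ verbatim.

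Concretely, let $\varphi\in\Hom_G(\cO_\infty,\cO_\infty)$ be unital. First I would form $\Phi\circ\varphi\circ\Phi^{-1}\in\Hom_G(A,A)$, which is again unital. By Lemma~\ref{unique2}, $\Phi\circ\varphi\circ\Phi^{-1}$ is $G$-asymptotically unitarily equivalent to $\id_A$: there is a norm-continuous family of unitaries $\{w(t)\}_{t\geq 0}$ in $A^G=M(A)^G$ (here $A$ is unital) with
$$\lim_{t\to\infty}\|\Phi\circ\varphi\circ\Phi^{-1}(y)-\Ad w(t)(y)\|=0,\quad\forall y\in A.$$
Setting $y=\Phi(x)$ for $x\in\cO_\infty$ and applying the isometric $G$-isomorphism $\Phi^{-1}$, the family $u(t):=\Phi^{-1}(w(t))$ lies in $\cO_\infty^G$ and satisfies
$$\lim_{t\to\infty}\|\varphi(x)-\Ad u(t)(x)\|=0,\quad\forall x\in\cO_\infty,$$
which is exactly the assertion that $\varphi$ is $G$-asymptotically unitarily equivalent to $\id_{\cO_\infty}$.

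There is essentially no obstacle here: the corollary is a transport-of-structure consequence of Corollary~\ref{unique} and Lemma~\ref{unique2}, and the only point requiring a word of care is that $G$-asymptotic unitary equivalence is preserved under conjugation by a $G$-isomorphism — which is immediate since such an isomorphism intertwines the $G$-actions and carries $M(\cO_\infty)^G$ onto $M(A)^G$ isometrically, so norm estimates and the fixed-point condition on the implementing unitaries are both preserved. The substantive content has already been done in Theorem~\ref{asymp} (to know $\gamma$ and $\alpha$ are conjugate) and in Lemma~\ref{unique2}.
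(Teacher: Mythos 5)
Your proposal is correct and follows essentially the same route as the paper: the corollary is obtained by transporting Lemma \ref{unique2} through the conjugacy between $\gamma$ and the model action $\alpha$, with the non-faithful case reduced to the faithful one exactly as you indicate. One small nit: Corollary \ref{unique} by itself does not give $\gamma\cong\alpha$, since $\alpha$ is not a priori quasi-free; that conjugacy is what the proof of Theorem \ref{asymp} establishes, as you correctly acknowledge at the end.
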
 
\section{Equivariant R{\o}rdam group} 

Let $A$ and $B$ be simple $C^*$-algebras. 
For simplicity we assume that $A$ and $B$ are unital.  
Following R{\o}rdam \cite[p.40]{R01}, we denote by $H(A,B)$ the set of the approximately unitary 
equivalence classes of nonzero homomorphisms from $A$ into $B\otimes \K$. 
Choosing two isometries $s_1$ and $s_2$ satisfying the $\cO_2$ relation in $M(B\otimes \K)$, 
we can define the direct sum $[\varphi]\oplus [\psi]$ of two classes $[\varphi]$ and $[\psi]$ in $H(A,B)$ to be the class of 
the homomorphism
$$A\ni x\mapsto s_1\varphi(x)s_1^*+s_2\psi(x)s_2^*\in B\otimes \K.$$
This makes $H(A,B)$ a semigroup. 
When $A$ is a separable simple nuclear $C^*$-algebra and $B$ is a Kirchberg algebra, 
the R{\o}rdam semigroup $H(A,B)$ is in fact a group. 
Moreover, if $A$ satisfies the universal coefficient theorem, it is isomorphic to $KL(A,B)$, a certain quotient of $KK(A,B)$. 

Let $G$ be a finite group, and let $\alpha$ and $\beta$ be outer $G$-actions on $A$ and $B$ respectively. 
We equip $B\otimes \K$ with a $G$-$C^*$-algebra structure by the diagonal action $\beta^s_g=\beta_g\otimes \Ad u_g$, 
where $\{u_g\}$ is a countable infinite direct sum of the regular representation of $G$.  
Then we can introduce an equivariant version $H_G(A,B)$ as the set of the $G$-approximately equivalence classes of nonzero 
$G$-homomorphisms in $\Hom_G(A,B\otimes \K)$. 

\begin{theorem}\label{GH} Let $(A,\alpha)$ and $(B,\beta)$ be unital $G$-$C^*$-algebras with outer actions $\alpha$ and $\beta$. 
We assume that $A$ is separable, simple, and nuclear, and $B$ is a Kirchberg algebra. 
Then $H_G(A,B)$ is a group.  
\end{theorem}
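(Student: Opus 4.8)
The plan is to show that $H_G(A,B)$ has a neutral element and that every element has an inverse, imitating the non-equivariant argument (R{\o}rdam \cite[Section 4]{R01}) but replacing the role of the Kirchberg--Phillips $\cO_\infty$-absorption and uniqueness theorems by their equivariant counterparts established in this note. The semigroup operation is already defined via the fixed $\cO_2$-pair $s_1,s_2$ in $M(B\otimes \K)$; note that one should choose $s_1,s_2$ \emph{inside} $M((B\otimes\K)^G)$, which is possible because $(B\otimes\K,\beta^s)$ contains the $G$-$C^*$-algebra $(\cO_2,\text{quasi-free})$ equivariantly unitally (using that $\{u_g\}$ contains infinitely many copies of the regular representation), so the operation lands in $\Hom_G(A,B\otimes\K)$ and is well-defined and associative on $G$-approximate-unitary-equivalence classes by a standard Eilenberg-swindle-type argument together with stability of $B\otimes\K$ under the $G$-action.

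The neutral element will be the class of the \textbf{trivial $G$-homomorphism} $\iota\colon A\to B\otimes\K$ of the form $x\mapsto \tau(x)\otimes e$ suitably twisted so that it is genuinely $G$-equivariant: concretely, pick a one-dimensional trivial summand is not enough since $A$ need not have a trivial rep, so instead take $\iota$ to factor through a unital $G$-embedding of $(A,\alpha)$ into $(\cO_\infty,\gamma)\hookrightarrow (B\otimes\K,\beta^s)$ whose image sits in a corner on which $\beta^s$ acts by $\Ad$ of a copy of the regular representation; the point is that the $K$-theoretic data of $\iota$ is ``absorbing''. The key input is Theorem \ref{GLP}: for any $\varphi\in\Hom_G(A,B\otimes\K)$, after cutting down by a projection in $(B\otimes\K)^G$ that is $K_0$-trivial, the restriction becomes $G$-approximately unitarily equivalent to a restriction of $\iota$, so $[\varphi]\oplus[\iota]=[\varphi]$ and $[\iota]$ is neutral.

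For \textbf{inverses}, fix $[\varphi]$ and look for $[\psi]$ with $[\varphi]\oplus[\psi]=[\iota]$. As in R{\o}rdam's argument, one produces $\psi$ so that $\varphi\oplus\psi$ has the same equivariant $KK$-class as $\iota$: since $KK_G(A,B\otimes\K)$ is a group, there is a class inverse to $KK_G(\varphi)$, and because $A$ is nuclear and $B\otimes\K$ is stably a Kirchberg algebra with outer action, every class in $KK_G$ is realized by some $\psi\in\Hom_G(A,B\otimes\K)$ (an equivariant existence theorem, which one gets from the equivariant $\cO_\infty$-theory; here one invokes Theorem \ref{splitting} to replace $(B,\beta)$ by $(B\otimes\bigotimes_\infty\cO_\infty,\dots)$ without changing the problem, and then the quasi-free model supplies the required homomorphisms). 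Then $\varphi\oplus\psi$ and $\iota$ have equal $KK_G$-classes and both are unital-into-a-corner $G$-homomorphisms from the Kirchberg-type algebra $A$, so Theorem \ref{GR}'s equivalence of $KK_G$-equality with $G$-approximate unitary equivalence — together with the uniqueness afforded by Theorem \ref{GLP} — gives $[\varphi]\oplus[\psi]=[\iota]$. I expect the \textbf{main obstacle} to be the equivariant existence statement (realizing an arbitrary $KK_G$-class by an actual $G$-homomorphism into $B\otimes\K$) and the bookkeeping needed to keep all cut-down projections $G$-invariant and $K_0$-trivial simultaneously; once these are in place, the group axioms follow from Theorems \ref{GR}, \ref{GLP}, and \ref{splitting} exactly as in the non-equivariant case.
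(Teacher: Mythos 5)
Your proposal does not go through as written, because both of its pillars are outsourced to theorems that are not available. For the inverses you need (a) an equivariant existence theorem realizing an arbitrary class in $KK_G(A,B\otimes\K)$ by a $G$-homomorphism, and (b) an equivariant uniqueness theorem saying that equality of $KK_G$-classes implies $G$-approximate unitary equivalence for homomorphisms out of a general separable simple nuclear $A$. Neither is proved in this paper: Theorem \ref{GR} and Theorem \ref{GLP} only classify $G$-homomorphisms whose \emph{domain} is $\cO_n$ or $\cO_\infty$ equipped with a quasi-free action, so invoking ``Theorem \ref{GR}'s equivalence of $KK_G$-equality with $G$-approximate unitary equivalence'' for homomorphisms out of $A$ (which is not even assumed to be a Kirchberg algebra, let alone $\cO_n$) is not legitimate, and you yourself flag the existence statement as the main obstacle -- but that obstacle is exactly where the group inverse is supposed to come from, so inverses are not actually produced. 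The neutral element is also problematic: it is built from an unproved unital $G$-embedding of $(A,\alpha)$ into $(\cO_\infty,\gamma)$, and the absorption $[\varphi]\oplus[\iota]=[\varphi]$ is justified only by ``cutting down by a $K_0$-trivial projection and applying Theorem \ref{GLP}'', which does not parse (Theorem \ref{GLP} requires domain $\cO_\infty$, and compressing $\varphi$ by a projection changes its class). The natural neutral class is not an $\cO_\infty$-factorization but the $\cO_2$-absorbing one, i.e.\ $G$-homomorphisms factoring through $(A\otimes\cO_2,\alpha\otimes\id_{\cO_2})$, which kills the $KK_G$-data outright.

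The paper takes a different and much lighter route, following \cite[Lemma 8.2.5]{R01}: it proves exactly two lemmas and then runs R{\o}rdam's semigroup argument verbatim. First, any two $\cO_2$-absorbing $G$-homomorphisms, either both unital or both nonunital, are $G$-approximately unitarily equivalent; this is proved by identifying $(A\otimes\cO_2,\alpha\otimes\id_{\cO_2})$ with $(\cO_\infty\otimes\cO_2,\gamma\otimes\id_{\cO_2})$ via \cite[Corollary 4.3]{IDuke}, matching the $\cO_\infty$-tensor factors by a unitary in $(B^\omega)^G$ using Theorem \ref{GLP}, and then handling the remaining $\cO_2$-factor inside the relative commutant by a finite exponential-length argument together with \cite[Theorem 3.6]{R93}. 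Second, every element of $\Hom_G(A,B\otimes\K)$ is $G$-approximately unitarily equivalent to an $\cO_\infty$-absorbing one, using $(B,\beta)\cong(B\otimes\cO_\infty,\beta\otimes\id_{\cO_\infty})$ from \cite[Corollary 2.10]{IDuke}. With these two facts, the neutral element (the $\cO_2$-absorbing class) and the inverses are obtained exactly as in the nonequivariant case, without ever realizing arbitrary $KK_G$-classes or classifying homomorphisms out of $A$. To salvage your scheme you would have to prove the equivariant existence and uniqueness theorems (an equivariant Kirchberg--Phillips classification of homomorphisms), which is a substantially harder undertaking than Theorem \ref{GH} itself.
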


Let $(A,\alpha)$ and $(B,\beta)$ be as above. 
We say that $\varphi\in \Hom_G(A,B)$ is $\cO_2$-\textit{absorbing} if there exists $\varphi'\in \Hom_G(A\otimes \cO_2,B)$ 
with $\varphi=\varphi'\circ \iota_A$, where $A\otimes\cO_2$ is equipped with the $G$-action $\alpha\otimes \id_{\cO_2}$, 
and $\iota_A:A\ni x\mapsto x\otimes 1\in A\otimes \cO_2$ is the inclusion map. 
We say that $\varphi\in \Hom_G(A,B)$ is $\cO_\infty$-\textit{absorbing} if there exists a unital embedding of $\cO_\infty$ 
in $(\varphi(1)B^G\varphi(1))\cap \varphi(A)'$. 

The proof of Theorem \ref{GH} follows from essentially the same argument as in \cite[Lemma 8.2.5]{R01} with the following lemma. 

\begin{lemma} Let the notation be as above. 
\begin{itemize}
\item[(1)] Let $\varphi,\psi\in \Hom_G(A,B)$ be $\cO_2$-absorbing $G$-homomorphisms, either both unital or both nonunital. 
Then $\varphi$ and $\psi$ are $G$-approximately unitarily equivalent. 
\item[(2)] Any element in $\Hom_G(A,B)$ is $G$-approximately unitarily equivalent to a 
$\cO_\infty$-absorbing one in $\Hom_G(A,B)$. 
\end{itemize}
\end{lemma}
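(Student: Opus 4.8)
The plan is to prove (1) as the $G$-equivariant version of Kirchberg's $\cO_2$-uniqueness theorem (\cite{R01}) and to prove (2) by presenting $\varphi$ as $G$-approximately unitarily equivalent to a ``tensorially stabilised'' map, using Theorem \ref{splitting} applied to the trivial quasi-free actions. Throughout I will use that $B^G$ is purely infinite and simple (since $B$ is a Kirchberg algebra and $\beta$ is outer), hence so is every corner $pB^Gp$ by a nonzero projection, and hence so is $(B^\omega)^G=(B^G)^\omega$ by Lemma \ref{ultra}(1). I will also use that if $p\in B$ is a nonzero $G$-invariant projection then $(pBp,\beta|_{pBp})$ is again a unital Kirchberg $G$-algebra with outer action: pure infiniteness and simplicity pass to corners, and $\beta|_{pBp}$ is outer because $p(B\rtimes_\beta G)p=pBp\rtimes_{\beta}G$ is a corner of the simple algebra $B\rtimes_\beta G$.

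For part (1), I would first reduce to the unital case. If $\varphi,\psi$ are nonunital, let $\varphi',\psi'\in\Hom_G(A\otimes\cO_2,B)$ be the witnessing $G$-homomorphisms. Then $\varphi'(1\otimes\cO_2)$ is a unital copy of $\cO_2$ inside the corner $\varphi(1)B^G\varphi(1)$ (the $G$-action on $1\otimes\cO_2$ being trivial), which forces $[\varphi(1)]=2[\varphi(1)]$, hence $[\varphi(1)]=0$ in $K_0(B^G)$, and similarly $[\psi(1)]=0$; then $[1-\varphi(1)]=[1]=[1-\psi(1)]$ as well. Since $B^G$ is purely infinite and simple, there is a unitary in $U(B^G)$ conjugating $\psi(1)$ onto $\varphi(1)$, so after replacing $\psi$ (and its witness) by a $G$-unitary conjugate we may assume $\varphi(1)=\psi(1)=:p$, and replacing $B$ by $pBp$ reduces us to the unital case. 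Now I would run Kirchberg's proof of the $\cO_2$-uniqueness theorem verbatim inside $(B^\omega)^G=(B^G)^\omega$: the copies of $\cO_2$ absorbed by $\varphi$ and $\psi$ carry the trivial $G$-action, so all ancillary projections, matrix units, isometries and unitary paths appearing in that argument can be chosen in the appropriate $G$-fixed subalgebra, and the argument only uses that $B^G$, its corners, and $(B^G)^\omega$ are purely infinite and simple. This yields a $G$-unitary equivalence of $\varphi^\omega\circ\iota_\omega$ and $\psi^\omega\circ\iota_\omega$ in $B^\omega$, which by the chain of equivalences recorded before the proof of Theorem \ref{GLP} gives a $G$-approximate unitary equivalence of $\varphi$ and $\psi$.

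For part (2), set $D=\varphi(1)B\varphi(1)$, a unital Kirchberg $G$-algebra with outer action, in which $\varphi\colon A\to D$ is unital. Apply Theorem \ref{splitting} to $(D,\beta|_D)$ with $\gamma^{(i)}$ the trivial quasi-free action of $G$ on $\cO_\infty$ for every $i$ (the trivial action is quasi-free, corresponding to an infinite multiple of the trivial representation). This gives a $G$-equivariant isomorphism $\Psi\colon D\to D\otimes\cO_\infty$, where $\cO_\infty$ (identified with $\bigotimes_{i=1}^\infty\cO_\infty$) carries the trivial $G$-action; moreover the two-sided intertwining underlying Theorems \ref{GE} and \ref{GKP} produces $\Psi$ as a limit $\lim_n\Ad u_n\circ(d\mapsto d\otimes 1)$ with $u_n\in U((D\otimes\cO_\infty)^G)$, so $\Psi$ is $G$-approximately unitarily equivalent to the embedding $d\mapsto d\otimes 1$. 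Put $\varphi_0:=\Psi^{-1}\circ(\varphi(\cdot)\otimes 1_{\cO_\infty})\in\Hom_G(A,D)\subset\Hom_G(A,B)$. Then $\Psi^{-1}(1_D\otimes\cO_\infty)$ is a unital copy of $\cO_\infty$ lying in $D^G=\varphi_0(1)B^G\varphi_0(1)$ and commuting with $\varphi_0(A)=\Psi^{-1}(\varphi(A)\otimes 1)$, so $\varphi_0$ is $\cO_\infty$-absorbing; and pulling the unitaries implementing $\Psi\simeq(d\mapsto d\otimes 1)$ back through $\Psi^{-1}$ (and extending them by $1-\varphi(1)$) shows $\varphi_0$ is $G$-approximately unitarily equivalent to $\varphi$. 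This covers both the unital and nonunital cases of $\varphi$ at once.

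The step that requires the most care is the bookkeeping in (1): one must check at every stage of Kirchberg's $\cO_2$ argument that the auxiliary data can be kept inside the fixed-point algebras. This is routine rather than deep, since the absorbed copies of $\cO_2$ are pointwise $G$-fixed and $B^G$, together with its corners and ultrapowers, inherits pure infiniteness and simplicity. In (2) the only input beyond Theorem \ref{splitting} is that the isomorphism it produces is $G$-approximately unitarily equivalent to the first-factor embedding; this is the usual strong-absorption behaviour (equivalently, $\cO_\infty$ with the trivial action is equivariantly strongly self-absorbing, which also follows from the fact that $\cO_\infty$ has approximately inner flip \cite{R01}), and is visible directly from the form of the intertwining.
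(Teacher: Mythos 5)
Your part (2) is essentially sound and close in spirit to the paper's: the paper quotes the equivariant absorption $(B,\beta)\cong(B\otimes \cO_\infty,\beta\otimes\id)$ from \cite{IDuke} and then runs the argument of \cite[Lemma 8.2.5]{R01}, while you re-derive that absorption from Theorem \ref{splitting} applied with trivial quasi-free actions; that is legitimate. Two caveats: you need the conjugacy to be $G$-approximately unitarily equivalent to the first-factor embedding, which is not part of the statement of Theorem \ref{splitting} or \ref{GKP} but only of their (omitted) intertwining proofs --- this is exactly the strengthened form \cite[Theorem 3.15]{KP} used classically, so you should invoke it explicitly rather than as something ``visible''; and your justification of outerness of $\beta$ on the corner $\varphi(1)B\varphi(1)$ via simplicity of the corner of the crossed product is not a proof (simplicity of $pBp\rtimes_\beta G$ does not formally give outerness on $pBp$); either quote the standard fact that outerness passes to invariant hereditary subalgebras of simple $C^*$-algebras, or apply the absorption to $(B,\beta)$ itself and cut down afterwards, as the paper does.

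Part (1), however, has a genuine gap at its core. The reduction to the unital case is fine, but the main step --- ``run Kirchberg's $\cO_2$-uniqueness proof verbatim inside $(B^\omega)^G$, choosing all ancillary data $G$-fixed because the absorbed copies of $\cO_2$ are pointwise fixed'' --- is an assertion, not an argument, and the accompanying claim that the proof ``only uses that $B^G$, its corners, and $(B^G)^\omega$ are purely infinite and simple'' is untenable. The intertwining unitaries must conjugate $\varphi(a)$ to $\psi(a)$ for all $a\in A$, where $A$ carries a nontrivial $G$-action; the triviality of the action on the absorbed $\cO_2$'s gives no control over those unitaries, and averaging over $G$ destroys unitarity. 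Moreover, the classical uniqueness for $\cO_2$-absorbing homomorphisms rests on nuclearity of $A$ and on Kirchberg's absorption theorem $A\otimes\cO_2\cong\cO_2$ (uniqueness of unital homomorphisms out of $A\otimes\cO_2$, fed into an Elliott intertwining, essentially recovers that deep theorem), so it cannot follow from pure infiniteness of fixed-point algebras alone. Equivariantly, the missing ingredient is precisely an equivariant absorption statement for $(A\otimes\cO_2,\alpha\otimes\id_{\cO_2})$: the paper quotes \cite[Corollary 4.3]{IDuke} to replace it by the model $(\cO_\infty\otimes\cO_2,\gamma\otimes\id_{\cO_2})$ with $\gamma$ faithful quasi-free, then uses Theorem \ref{GLP} to match the $\cO_\infty$-parts by a unitary in $U((B^\omega)^G)$, and finally handles the $\cO_2$-parts by a non-equivariant argument in the relative commutant $D=(B^\omega)^G\cap\psi(\cO_\infty\otimes1)'$, showing $\mathrm{cel}(D)<\infty$ via unital copies of $\cO_\infty$ in relative commutants and applying \cite[Theorem 3.6]{R93}. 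None of these inputs (or substitutes for them) appear in your proposal, and without them the claimed $G$-invariant approximate intertwiners for $\varphi$ and $\psi$ on all of $A$ are not produced.
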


\begin{proof} (1) When $\varphi$ and $\psi$ are nonunital, the two projections $\varphi(1)$ and $\psi(1)$ are equivalent in 
$B^G$, and we may assume $\varphi(1)=\psi(1)$. 
Replacing $B$ with $\varphi(1)B\varphi(1)$, we may assume that $\varphi$ and $\psi$ are unital. 

Let $\gamma$ be a faithful quasi-free action of $G$ on $\cO_\infty$. 
Since $(A\otimes \cO_2,\alpha\otimes \id_{\cO_2})$ is conjugate to $(\cO_\infty\otimes \cO_2,\gamma\otimes \id_{\cO_2})$ thanks to 
\cite[Corollary 4.3]{IDuke}, it suffices to show that any unital $\varphi,\psi\in \Hom_G(\cO_\infty\otimes \cO_2,B)$ 
are $G$-approximately unitarily equivalent. 
Theorem \ref{GLP} implies that there exists $u\in U((B^\omega)^G)$ satisfying $u\varphi(x\otimes 1)u^*=\psi(x\otimes 1)$ for any 
$x\in \cO_\infty$, where $\omega\in \beta \N\setminus \N$ is a free ultrafilter. 
Let $D=(B^\omega)^G\cap \psi(\cO_\infty\otimes 1)'$. 
Then it suffices to show that the two unital homomorphisms $\rho,\sigma\in \Hom(\cO_2,D)$ 
defined by $\rho(y)=u\varphi(1\otimes y)u^*$, $\sigma(y)=\psi(1\otimes y)$ for $y\in \cO_2$, are approximately unitarily equivalent. 
Indeed, since $(B^\omega)^G\cap B'=(B_\omega)^G$ is purely infinite and simple, for any separable $C^*$-subalgebra $C$ of 
$D$ there exists a unital embedding of $\cO_\infty$ in $D\cap C'$. 
Thus essentially the same proof of \cite[Lemma 2.1.7]{Ph} shows that $\mathrm{cel}(D)$ is finite (see \cite[Lemma 2.1.1]{Ph} 
for the definition). 
Therefore $\rho$ and $\sigma$ are approximately unitarily equivalent thanks to \cite[Theorem 3.6]{R93}. 

(2) Since $(B,\beta)$ is conjugate to $(B\otimes \cO_\infty,\beta\otimes \id_{\cO_\infty})$ thanks to \cite[Corollary 2.10]{IDuke}, 
the statement follows from the same argument as in the proof of \cite[Lemma 8.2.5,(i)]{R01}. 
\end{proof}

\begin{remark}
There are two natural homomorphisms 
$$\mu:H_G(A,B)\rightarrow H(A,B),$$
$$\nu:H_G(A,B)\rightarrow H(A\rtimes_\alpha G, B\rtimes_\beta G).$$
The first one is the forgetful functor. 
Every $\varphi\in \Hom_G(A,B)$ extends to $\tilde{\varphi}\in \Hom(A\rtimes_\alpha G,B\rtimes_\beta G)$ by 
$\tilde{\varphi}(\lambda^\alpha_g)=\lambda^\beta_g$, and the second one is given by associating $[\tilde{\varphi}]\in 
H(A\rtimes_\alpha, G,B\rtimes_\beta G)$ with $[\varphi]\in H_G(A,B)$. 
The following hold for the two maps (see  \cite[Section 4]{IM} for more general treatment): 
\begin{itemize}
\item[(1)] If $\beta$ has the Rohlin property, then $\mu$ is injective, and the image of $\mu$ is  
$$\{[\rho]\in H(A,B)|\; [\beta^s_g\circ \rho]=[\rho\circ \alpha_g],\;\forall g\in G\}. $$
\item[(2)] If $\beta$ is approximately representable, then $\nu$ is injective, and the image of $\nu$ is 
$$\{[\rho]\in H(A\rtimes_\alpha G,B\rtimes_\beta G)|\; [\hbeta^s\circ \rho]=[(\rho\otimes \id_{C^*(G)})\circ \halpha]\}.$$ 
\end{itemize}
\end{remark}

\begin{remark} 
Let $\hat{H}_G(A,B)$ be the set of the $G$-asymptotically equivalence classes of nonzero 
$G$-homomorphisms in $\Hom_G(A,B\otimes \K)$. 
It is tempting to conjecture that the natural map from $\hat{H}_G(A,B)$ to the equivariant $KK$-group $KK_G(A,B)$ 
is an isomorphism, as it is the case for trivial $G$ (see \cite{Ph}).
\end{remark}
\section{Appendix} 
In this appendix, we show the equivalence of (4) and (5) in Theorem \ref{GR}. 
Since our argument works for a compact group $G$, we assume that $G$ is compact 
in what follows. 
Our proof is new even for trivial $G$. 
Let $\alpha$ be a quasi-free action of $G$ on $\cO_n$ with finite $n$, 
and let $(B,\beta)$ be a unital $G$-$C^*$-algebra. 
Now the definition of the projection $e_\beta\in B\rtimes_\beta G$ should be modified to $e_\beta=\int_G\lambda^\beta_gdg$, 
where $dg$ is the normalized Haar measure of $G$. 
For two unital $\varphi,\psi\in \Hom_G(\cO_n,B)$, we define $u_{\varphi,\psi}\in U(B^G)$ as in Theorem \ref{GR}. 

Let $\cE_n$ be the Cuntz-Toeplitz algebra with the canonical generators $\{t_i\}_{i=1}^n$.  
We denote by $q_n$ the surjection $q_n:\cE_n\rightarrow \cO_n$ sending $t_i$ to $s_i$ for $i=1,2,\cdots, n$. 
Then the kernel $J_n$ of $q_n$ is the ideal generated by $p_n=1-\sum_{i=1}^nt_it_i^*$, and is isomorphic to the compact operators $\K$. 
We denote by $i_n:J_n\rightarrow \cE_n$ the inclusion map. 
Since $\cO_n$ is nuclear, the exact sequence 

\begin{equation}\label{short} \CD
0@>>>J_n @>i_n>>\cE_n@>q_n>> \cO_n@>>> 0,
\endCD
\end{equation}
is semisplit, that is, there exists a unital completely positive lifting $l_n:\cO_n\rightarrow \cE_n$ 
of $q_n$. 
We denote by $\tilde{\alpha}$ the quasi-free action of $G$ on $\cE_n$ that is a lift of $\alpha$. 
By replacing $l_n$ with $l_n^G$ given by 
$$l_n^G(x)=\int_G \tilde{\alpha}_g\circ l_n\circ \alpha_{g^{-1}}(x)dg,\quad x\in \cO_n, $$
we see that (8.1) is a semisplit exact sequence of $G$-$C^*$-algebras. 
Thus it induces the following 6-term exact sequence of $KK_G$-groups: 
$$\CD
KK_G^0(J_n,B)@<i_n^*<<KK_G^0(\cE_n,B)@<q_n^*<<KK_G^0(\cO_n,B)\\
@V\delta VV@.@AA\delta A\\
KK_G^1(\cO_n,B)@>>q_n^*>KK_G^1(\cE_n,B)@>>i_n^*>KK_G^1(J_n,B)
\endCD .
$$

Let $H_n$ be the $n$-dimensional Hilbert space $\C^n$ with the canonical orthonormal basis $\{e_i\}_{i=1}^n$.  
We regard $H_n$ as a $\C-\C$ bimodule with a $G$-action given by $\pi_\alpha$. 
We denote by $\cF_n$ the full Fock space 
$$\cF_n=\bigoplus_{m=0}^\infty H_n^{\otimes m},$$ 
with a unitary representation $\pi_{\cF_n}$ of $G$ coming from $\pi_\alpha$. 
Identifying $t_i$ with the creation operator of $e_i$ acting on $\cF_n$, we regard $\cE_n$ as a $C^*$-subalgebra of $\B(\cF_n)$. 
With this identification, we have $J_n=\K(\cF_n)$, and  $p_n$ is the projection onto $H_n^{\otimes 0}$. 
We regard $\cF_n$ as $J_n-\C$ bimodule, which gives the $KK_G$-equivalence of $J_n$ and $\C$. 
Pimsner's computation \cite[Theorem 4.9]{P} yields the following 6-term exact sequence:  
$$\CD
KK_G^0(\C,B)@<1-[H_n]\hotimes<<KK_G^0(\C,B)@<<<KK_G^0(\cO_n,B)\\
@V\delta' VV@.@AA\delta' A\\
KK_G^1(\cO_n,B)@>>>KK_G^1(\C,B)@>>1-[H_n]\hotimes>KK_G^1(\C,B)
\endCD ,$$
where $[H_n]\hotimes$ denote the left multiplication of the class $[H_n]\in KK_G(\C,\C)$. 
Note that the identification of $KK_G^*(J_n,B)$ and $KK_G^*(\C,B)$ is given by $[\cF_n]\in KK_G(J_n,\C)$, 
and so $\delta'=\delta\circ (\cF_n\hotimes)$. 

With the Green-Julg isomorphism $h_*:KK_G^*(\C,B)\rightarrow K_*(B\rtimes_\beta G)$ (\cite[Theorem 11.7.1]{B}), 
we have the commutative diagram  
$$\CD
KK_G^*(\C,B)@>[\cH_n]\hotimes >>KK_G^*(\C,B)\\
@V h_* VV @VV h_* V\\
K_*(B\rtimes_\beta G)@>K_*(\hat{\beta}_{\pi_\alpha}) >>K_*(B\rtimes_\beta G)\\
\endCD,
$$
and so we get the following 6-term exact sequence 
$$\CD
K_0(B\rtimes_\beta G)@<1-K_0(\hat{\beta}_{\pi_\alpha})<<K_0(B\rtimes_\beta G)@<<<KK_G^0(\cO_n,B)\\
@V\delta''VV@.@AA\delta'' A\\
KK_G^1(\cO_n,B)@>>>K_1(B\rtimes_\beta G)@>>1-K_1(\hat{\beta}_{\pi_\alpha})>K_1(B\rtimes_\beta G)
\endCD ,$$
with $\delta''=\delta\circ ([\cF_n]\hotimes)\circ h_*^{-1}$. 
Now the proof of the equivalence of (4) and (5) in Theorem \ref{GR} follows from the next theorem. 

\begin{theorem}\label{boundary} With the above notation, we have 
$$\delta''(K_1(j_\beta)([u_{\psi,\varphi}]))=KK_G(\psi)-KK_G(\varphi).$$
\end{theorem}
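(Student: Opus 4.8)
The plan is to recognise $\delta''$ as a Kasparov product and then to evaluate that product explicitly. Since $\delta$ is the connecting map of the semisplit extension \eqref{short}, it is left Kasparov multiplication by the extension class $\partial\in KK_G^1(\cO_n,J_n)$; hence
$$\delta''(z)=\partial\hotimes_{J_n}\bigl([\cF_n]\hotimes_\C h_*^{-1}(z)\bigr)=\eta\hotimes_\C h_*^{-1}(z),\qquad \eta:=\partial\hotimes_{J_n}[\cF_n]\in KK_G^1(\cO_n,\C),$$
where $\eta$ is the Pimsner boundary class of the Toeplitz extension. Theorem \ref{boundary} thus amounts to the identity
$$\eta\hotimes_\C h_*^{-1}\!\bigl(K_1(j_\beta)([u_{\psi,\varphi}])\bigr)=KK_G(\psi)-KK_G(\varphi)\quad\text{in }KK_G(\cO_n,B).$$

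Next I would write down explicit $G$-equivariant cycles for the three objects. Using the $G$-invariant ucp lift $l_n^G$ and the Fock realisation $\cE_n\subset\B(\cF_n)$, the class $\eta$ is represented by an odd cycle of the form $(\cF_n,\,l_n^G,\,1-2p_n)$ with $G$ acting on $\cF_n$ through $\pi_{\cF_n}$; equivalently $\eta$ is the class of the essential extension $\cO_n=\cE_n/\K(\cF_n)\hookrightarrow\B(\cF_n)/\K(\cF_n)$. Unwinding that $j_\beta$ is the full-corner embedding $B^G\cong e_\beta(B\rtimes_\beta G)e_\beta$ and the definition of the Green--Julg isomorphism $h_*$, the element $h_*^{-1}(K_1(j_\beta)([u_{\psi,\varphi}]))\in KK_G^1(\C,B)$ is represented by a concrete odd $(\C,B)$-cycle assembled from the unitary $u_{\psi,\varphi}\in U(B^G)$, the projection $e_\beta$, and the regular representation of $G$ --- the same data that governs $\hbeta_{\pi_\alpha}$. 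Finally, $KK_G(\psi)-KK_G(\varphi)$ is represented by the Cuntz-pair cycle $\bigl(B\oplus B,\,\psi\oplus\varphi,\,\bigl(\begin{smallmatrix}0&1\\1&0\end{smallmatrix}\bigr)\bigr)$, which is a genuine Kasparov module here since the compacts on the Hilbert $B$-module $B$ exhaust $B$.

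With these representatives in hand I would compute the Kasparov product $\eta\hotimes_\C h_*^{-1}(K_1(j_\beta)([u_{\psi,\varphi}]))$ on the level of cycles: it lives on $\cF_n$ tensored over $\C$ with the $(\C,B)$-cycle above, carries the $\cO_n$-action inherited from the $\cE_n$-action on $\cF_n$, together with an operator assembled from the Fock symmetry $1-2p_n$ and the unitary $u_{\psi,\varphi}$. The two isometric embeddings of this module into the standard Hilbert $B$-module that are furnished by $\{\varphi(s_i)\}$ and by $\{\psi(s_i)\}$ differ exactly by $u_{\psi,\varphi}$, because $\psi(s_i)=u_{\psi,\varphi}\varphi(s_i)$; feeding this, together with the Toeplitz relation $1-\sum_{i=1}^n t_it_i^*=p_n$ that turns the defect of these isometries into the formal difference of the two inclusions, into the connection formula for the product yields an explicit $G$-equivariant operator homotopy from the product cycle to the Cuntz pair of $\psi$ and $\varphi$. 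To keep the calculation under control I would also use that both sides of the claimed identity are group homomorphisms $K_1(B^G)\to KK_G(\cO_n,B)$ in the argument $[u_{\psi,\varphi}]$ --- on the right because $\psi(s_i)=u\varphi(s_i)$ makes $[u]\mapsto KK_G(\psi)-KK_G(\varphi)$ additive and independent of the auxiliary $\varphi$, on the left by bi-additivity of the Kasparov product --- and are natural under unital $G$-homomorphisms $(B,\beta)\to(B',\beta')$, which lets one reduce the explicit homotopy to a single model pair $(\varphi,u)$.

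The main obstacle is precisely this bookkeeping. One has to push the five identifications --- the Toeplitz extension class $\partial$, the Fock Morita bimodule $[\cF_n]$, the Green--Julg isomorphism $h_*$, the corner map $j_\beta$, and the dual-coaction twist $\hbeta_{\pi_\alpha}$ measured against the $\pi_{\cF_n}$-action on $\cF_n$ --- through the product computation with consistent orientations, and verify at each stage that the $G$-invariant lift $l_n^G$ keeps every cycle genuinely $G$-equivariant. No single step is deep, but getting all the signs and the equivariance twist to line up is delicate; this is the "rather lengthy computation", and it is why the equivalence (4)$\Leftrightarrow$(5) is deferred to the appendix and why the proof is new even for trivial $G$.
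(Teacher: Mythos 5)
Your overall strategy is the same as the paper's: write $\delta''$ as left Kasparov multiplication by the Toeplitz/Pimsner boundary class transported through $[\cF_n]$ and Green--Julg, represent everything by explicit $G$-equivariant cycles built from the invariant ucp lift $l_n^G$, and use $\psi(s_i)=u_{\psi,\varphi}\varphi(s_i)$ together with the vacuum projection $p_n$ to recognize the product as $KK_G(\psi)-KK_G(\varphi)$ after a homotopy. But the proposal stops exactly where the proof begins, and two of the concrete claims it leans on fail as stated. First, $(\cF_n,\,l_n^G,\,1-2p_n)$ is not a Kasparov cycle: $l_n^G$ is only a ucp map, and the correct representative lives on a Stinespring dilation $H\supset\cF_n$ with operator $(2P-1)\hotimes\varepsilon$, $P$ being the projection of $H$ onto $\cF_n$ (not the vacuum symmetry on $\cF_n$). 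The paper needs a genuine lemma here: a dilation $\Phi$ of $l_n^G$ that is triangular with respect to $H=\cF_n\oplus\cF_n^\perp$, with the Toeplitz isometries $t_i$ in the upper-left corner; only with this does the product $\delta_{q_n}\hotimes_{\C_1}\xbf$ become a quasi-homomorphism $(\rho^{(0)},\rho^{(1)})$ satisfying the key identity $\sum_{i=1}^n\rho^{(1)}(s_i)\rho^{(0)}(s_i)^*=(1_H-p_n)\hotimes 1+p_n\hotimes z^*$, and your sketch has no substitute for it. Second, the maps $e_\mu\otimes b\mapsto\varphi(s_\mu)b$ and $e_\mu\otimes b\mapsto\psi(s_\mu)b$ are not isometric embeddings of the Fock module: since $\sum_i\varphi(s_i)\varphi(s_i)^*=1$, words of different lengths are not orthogonal (e.g.\ $\varphi(s_1)^*\varphi(s_1s_1)=\varphi(s_1)\neq 0$). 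This is precisely why one must work with the Toeplitz isometries on the dilation rather than with $\varphi,\psi$ directly, and why ``feeding this into the connection formula'' is not a corollary of what precedes it but is the content of the appendix: the explicit invertible $\xbf\in KK_G(\C_1,S)$ with fixed Clifford conventions (on which, as the paper stresses, the computation really depends), the verification of the $F_2$-connection, and the final homotopy after forming $\sigma^{(0)}\oplus\varphi$ and $\sigma^{(1)}\oplus\psi$, where the mismatch unitary $\bigl((1_H-p_n)\hotimes 1+p_n\hotimes u_{\psi,\varphi}^*\bigr)\oplus\bigl(1\hotimes u_{\psi,\varphi}\bigr)$ is connected to $1$ inside $\C 1+\K(H\oplus\C)^G\otimes B^G$.

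The auxiliary reduction you propose is also not available. That $KK_G(\psi)-KK_G(\varphi)$ depends only on the class $[u_{\psi,\varphi}]\in K_1(B^G)$ is not known at this stage -- it is, in effect, part of what the equivalence of (4) and (5) delivers -- so using it as an input is circular; moreover, in the appendix $B$ is merely a unital $G$-$C^*$-algebra (no outerness, no pure infiniteness), so $K_1(B^G)$ cannot be identified with homotopy classes of invariant unitaries, and additivity of $[u]\mapsto KK_G(\psi)-KK_G(\varphi)$ is not evident. Naturality in $B$ alone does not produce a ``single model pair'' either: one would need a universal $(B_0,\beta_0)$ carrying a universal invariant unitary, inside which the same explicit homotopy would still have to be constructed. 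The paper avoids all of this by proving the identity directly for the given $\varphi,\psi$ through the explicit cycle-level computation sketched above.
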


The proof of Theorem \ref{boundary} follows from a standard and rather tedious computation below.  
In what follows, we freely use the notation in Blackadar's book \cite{B} for $KK$-theory. 
We regard $\C_1$, $C=C_0[0,1)$, and $S=C_0(0,1)$  as $G$-$C^*$-algebras with trivial $G$-actions. 

\cite[Theorem 19.5.7]{B} shows that $\delta'$ is given by the left multiplication of the class $\delta_{q_n}$ of 
the extension (\ref{short}) in $KK_G^1(\cO_n,\C)=KK_G(\cO_n,\C_1)$, whose Kasparov module 
$(E_1,\phi_1,F_1)\in \E_G(\cO_n,\C_1)$ is given as follows. 
By the Stinespring dilation of the $G$-equivariant lifting $l_n^G:\cO_n\rightarrow \cE_n\subset \B(\cF_n)$, 
we get a Hilbert space $H$ including $\cF_n$, with a unitary representation $\pi_H$ of $G$ extending $\pi_{\cF_n}$, 
satisfying the following condition:  
there is a unital $G$-homomorphism $\Phi:\cO_n\rightarrow \B(H)$ such that if $P$ is the projection from $H$ onto 
$\cF_n$, then $l_n^G(x)=P\Phi(x)P$ for any $x\in \cO_n$. 
Now we have 
$$(E_1,\phi_1,F_1)=(H\hotimes\C_1, \Phi\hotimes 1, (2P-1)\hotimes \varepsilon),$$ 
where $\varepsilon=1\oplus -1$ is the generator of $\C_1\cong C^*(\Z_2)$. 

Let $z(t)=e^{2\pi i t}$, and let $\theta$ be the element in $\Hom_G(C_0(0,1),B)$ determined by $\theta(z-1)=u_{\psi,\varphi}-1$. 
Then $h_1^{-1}\circ K_1(j_\beta)([u_{\psi,\varphi}])$ is given by 
$$KK_G(\theta)\in KK_G(C_0(0,1),B)\cong KK_G(\C_1,B).$$  

In order to compute the Kasparov product of $\delta_{q_n}\in KK_G(\cO_n,\C_1)$ and $KK_G(\theta)\in KK_G(S,B)$, 
we need to identify $KK_G(S,B)$ with $KK_G(\C_1,B)$ explicitly, and 
we need the invertible element $\xbf \in KK_G(\C_1,S)$ defined in \cite[Section 19.2]{B}. 
By the extension 
$$\CD
0@>>>S @>>>C@>>>\C@>>> 0,
\endCD$$
we get an invertible element in $KK_G(\C,S\hotimes \C_1)$. 
Then $\xbf$ is the image of this element by the isomorphism 
\begin{eqnarray*}\lefteqn{\tau_{\C_1}:KK_G(\C,S\hotimes \C_1)\rightarrow 
KK_G(\C\hotimes \C_1,S\hotimes\C_1\hotimes \C_1)}\\
&=&KK_G(\C_1,S\hotimes M_2(\C))= KK_G(\C_1,S).
\end{eqnarray*}

For the identification of $\C_1\hotimes \C_1$ and $M_2(\C)$ with standard even grading, we follow the convention 
in the proof of \cite[Theorem 18.10.12]{B} (our computation really depends on it). 
A direct computation shows that $\xbf$ is given by the Kasparov module $(E_2,\phi_2,F_2)\in \E_G(\C_1,S)$ with 
$E_2=\C^2\hotimes (S\oplus S)$, 
$$F_2=\left(
\begin{array}{cc}
0 &1  \\
1 &0 
\end{array}
\right)\otimes \left(
\begin{array}{cc}
1 &0  \\
0 &-1 
\end{array}
\right),$$
$$\phi_2(1)=\left(
\begin{array}{cc}
1 &0  \\
0 &1 
\end{array}
\right)
\otimes Q,\quad 
\phi_2(\varepsilon)
=
\left(
\begin{array}{cc}
0 &-i  \\
i &0 
\end{array}
\right)
\otimes Q,
$$
where the projection $Q\in M_2(M(S))$ is given by 
$$Q(t)=\left(
\begin{array}{cc}
1-t &\sqrt{t(1-t)}  \\
\sqrt{t(1-t)}  &t
\end{array}
\right),
$$
and the grading of $E_2$ is given by 
$$\left(
\begin{array}{cc}
1 &0  \\
0 &-1 
\end{array}
\right)
\otimes 
\left(
\begin{array}{cc}
1 &0  \\
0 &1 
\end{array}
\right).
$$

With this $\xbf$, we have $$\delta''(K_1(j_\beta)([u_{\psi,\varphi}]))
=\delta_{q_n}\hotimes_{\C_1} \xbf\hotimes_S KK_G(\theta)
=\theta_*(\delta_{q_n}\hotimes_{\C_1} \xbf),$$
and so our task now is to compute $\delta_{q_n}\hotimes_{\C_1} \xbf$ explicitly. 

\begin{lemma}\label{quasi-homo} The class $\delta_{q_n}\hotimes_{\C_1} \xbf\in KK_G(\cO_n,S)$ is given by 
the quasi-homomorphism $\rho=(\rho^{(0)},\rho^{(1)})$ from $\cO_n$ to $S$ such that 
$\rho^{(0)}$ and $\rho^{(1)}$ are unital homomorphisms from $\cO_n$ to $\B(H\hotimes S)$ with
$\rho^{(0)}(x)=\Phi(x)\hotimes 1$ and 
$$\rho^{(1)}(x)=\big(P\hotimes 1+(1-P)\hotimes z\big)(\Phi(x)\hotimes 1) \big(P\hotimes 1+(1-P)\hotimes z\big)^*.$$
\end{lemma}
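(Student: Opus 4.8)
The plan is to identify the Kasparov product $\delta_{q_n}\hotimes_{\C_1}\xbf$ by writing down an explicit representative and verifying that it satisfies Connes--Skandalis's conditions for a Kasparov product (\cite[Definition 18.4.1, Theorem 18.4.3]{B}). First I would assemble the graded tensor product $E_1\hotimes_{\C_1}E_2$ of the two Kasparov modules. Since $E_1=H\hotimes\C_1$ with $F_1=(2P-1)\hotimes\varepsilon$ and $\phi_1=\Phi\hotimes 1$, and $E_2=\C^2\hotimes(S\oplus S)$ with the data spelled out just above, the tensor product over $\C_1$ is $H\hotimes_{\C_1}E_2$, where $\C_1$ acts on $E_2$ through $\phi_2$. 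Using the explicit formula for $\phi_2(\varepsilon)$ and $Q$ one computes this internal tensor product concretely; the even/odd $\Z_2$-grading of $\C_1$ interacts with $Q(t)$ exactly so that the result is a free $S$-module built out of two copies of $H$, and the operator $\phi_2(\varepsilon)\sim(2P-1)$ matching-up forces the "off-diagonal" piece of $H$ relative to $P$ to be twisted by the loop $z(t)=e^{2\pi it}$. After unwinding the standard identification $\C_1\hotimes\C_1\cong M_2(\C)$ following the convention in the proof of \cite[Theorem 18.10.12]{B}, the module collapses to $(H\hotimes S)\oplus(H\hotimes S)$ with the two summands carrying the homomorphisms $\Phi(x)\hotimes 1$ and the conjugated version by $P\hotimes 1+(1-P)\hotimes z$.

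Next I would check that this really computes the product. Concretely, write the candidate $F$-operator on $E_1\hotimes_{\C_1}E_2$ as $G_1\# G_2$ in the sense of a connection: it must be an $F_2$-connection for $E_1$ (condition on commutators with $\phi_1(a)\hotimes 1$ and $T_\xi$), it must satisfy $\phi_1(a)[F_1\hotimes 1,\, G_1\#G_2]\phi_1(a)^*\geq 0$ modulo compacts (the positivity condition), and $(E_1\hotimes_{\C_1}E_2, \phi_1\hotimes 1, G_1\#G_2)$ must define the claimed $KK_G(\cO_n,S)$-class. The self-adjoint unitary $F_1=(2P-1)\hotimes\varepsilon$ is simple enough that the connection and positivity conditions reduce to elementary operator identities involving $P$, $1-P$, $Q(t)$ and the loop $z$; I would do these checks coordinate-wise in the $2\times 2$-matrix picture. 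Then, because $F_1$ is already a unitary and $\Phi$ is a genuine (not just completely positive) homomorphism, the resulting Kasparov module is a \emph{difference} of two honest unital $*$-homomorphisms into $\B(H\hotimes S)$, hence is exactly the quasi-homomorphism $\rho=(\rho^{(0)},\rho^{(1)})$ as stated, with $\rho^{(0)}(x)=\Phi(x)\hotimes 1$ and $\rho^{(1)}$ the conjugate by $P\hotimes 1+(1-P)\hotimes z$.

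The main obstacle I expect is bookkeeping with the $\Z_2$-gradings and the specific isomorphism $\C_1\hotimes\C_1\cong M_2(\C)$: the sign conventions in the Kasparov product are notoriously sensitive, and the statement explicitly warns that the computation depends on the convention in \cite[Theorem 18.10.12]{B}. One must be careful that the graded commutator $[\phi_1(\varepsilon)\hotimes 1,\ 1\hotimes F_2]$ produces the correct anticommutator rather than commutator, and that the partition-of-unity operators $T_\xi$ used to build the connection are handled with the right Koszul signs. A secondary, more conceptual point is checking $G$-equivariance throughout: since $P$, $\Phi$, and $\pi_H$ are all $G$-equivariant by construction (via the averaged lifting $l_n^G$), and $S$, $\C_1$ carry trivial $G$-actions, every operator in sight commutes with the $G$-representation, so equivariance is automatic once the non-equivariant computation is correct; I would simply remark on this rather than belabor it. Once Lemma~\ref{quasi-homo} is in hand, Theorem~\ref{boundary} follows by applying $\theta_*$ and identifying $\theta_*(\rho)$ with $KK_G(\psi)-KK_G(\varphi)$, using that $\rho^{(1)}$ differs from $\rho^{(0)}$ precisely by the unitary whose "winding" records $u_{\psi,\varphi}=\sum_i\psi(s_i)\varphi(s_i)^*$.
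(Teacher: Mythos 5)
Your plan is essentially the paper's own argument: one identifies $E_1\hotimes_{\C_1}E_2$ explicitly as two copies of $H\hotimes S$, writes down a candidate operator, verifies the Connes--Skandalis criteria ($F_2$-connection plus positivity of the graded commutator with $F_1\hotimes 1$) using the same kind of coordinatewise checks with the operators $T_\xi$, and then, since the resulting $F$ is a self-adjoint unitary, rotates it into standard form to read off the quasi-homomorphism $(\rho^{(0)},\rho^{(1)})$ with $\rho^{(1)}$ the conjugate of $\Phi\hotimes 1$ by $P\hotimes 1+(1-P)\hotimes z$. The only point you leave implicit is the explicit rotation (the paper's unitary $U$ built from $1\hotimes c+i(2P-1)\hotimes s$), which is exactly the bookkeeping step you flag as convention-sensitive; otherwise the approach coincides with the paper's proof.
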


\begin{proof} We regard $H\hotimes S$ as a $\cO_n$-$S$ bimodule with trivial grading, and we set 
$E=(H\hotimes S)\oplus (H\hotimes S)^{op}$. 
We denote by $\Psi:S\rightarrow Q(S\oplus S)$ a Hilbert $S$-module isomorphism given by 
$$\Phi(f)(t)=(\sqrt{1-t}f(t),\sqrt{t}f(t)).$$
Then $E_1\hotimes_{\C_1}E_2$ is identified with $E$ via the identification of 
$(\xi_1\hotimes f_1, \xi_2\hotimes f_2)\in E$ and 
$$\xi_1\hotimes 1\hotimes_{\C_1}(1,0)\hotimes \Psi(f_1)+\xi_2\hotimes 1\hotimes_{\C_1}(0,1)\hotimes \Psi(f_2)
\in H\hotimes \C_1\hotimes_{\C_1} \C^2\hotimes (S\oplus S).$$ 

We claim that $\delta_{q_n}\hotimes_{\C_1} \xbf$ is given by the Kasparov module 
$(E,\phi,F)\in \E_G(\cO_n,S)$ with 
$$\phi(x)=\mathrm{diag}(\Phi(x)\otimes 1,\Phi(x)\otimes 1),$$
$$F=\left(
\begin{array}{cc}
0 &1\hotimes c  \\
1\hotimes c &0 
\end{array}
\right)+
\left(
\begin{array}{cc}
0 &-i(2P-1)\hotimes s  \\
i(2P-1)\hotimes s &0 
\end{array}
\right),
$$
where $c(t)=\cos(\pi t)$, $s(t)=\sin(\pi t)$. 
Indeed, it is easy to show that $(E,\phi,F)$ is a Kasparov module, 
and the graded commutator $[F_1\hotimes 1_{E_2},F]$ is positive. 
We show that $F$ is a $F_2$-connection (see \cite[Definition 18.3.1]{B} for the definition). 
Let $\xi\in H$, $x=(x_1,x_2)\in \C^2$, and $f=(f_1,f_2)\in S\oplus S$. 
Then we have 
$$T_{\xi\hotimes 1}(x\hotimes f)=(x_1\xi\hotimes (\sqrt{1-t}f_1+\sqrt{t}f_2),x_2\xi\hotimes (\sqrt{1-t}f_1+\sqrt{t}f_2))\in E,$$
$$T_{\xi\hotimes \varepsilon}(x\hotimes f)=(-ix_2\xi\hotimes (\sqrt{1-t}f_1+\sqrt{t}f_2),ix_1\xi\hotimes (\sqrt{1-t}f_1+\sqrt{t}f_2))\in E.$$
A direct computation shows that $T_{\xi\hotimes 1}\circ F_2-F\circ T_{\xi\hotimes 1}$ and 
$T_{\xi\hotimes\varepsilon}\circ F_2+F\circ T_{\xi\hotimes \varepsilon}$ are in $\K(E_2,E)$. 
Since $F_2$ and $F$ are self-adjoint, we see that $F$ is a $F_2$-connection. 
Therefore $(E,\phi,F)$ gives the Kasparov product $\delta_{q_n}\hotimes_{\C_1} \xbf$. 

Note that $F$ satisfies $F=F^*$, $F^2=1$. 
Let 
$$U=\left(
\begin{array}{cc}
1\otimes 1 &0  \\
0 &1\hotimes c+i(2P-1)\hotimes s 
\end{array}
\right),
$$which is a unitary in $\B(E)$. 
Then we have 
$$U^*FU=\left(
\begin{array}{cc}
0 &1\otimes 1   \\
1\otimes 1  & 0
\end{array}
\right),
$$
$$U^*\phi(x)U=\left(
\begin{array}{cc}
\rho^{(0)}(x) &0  \\
0 & \rho^{(1)}(x)
\end{array}
\right),
$$
which finish the proof. 
\end{proof}

To continue the proof, we need more detailed information of the homomorphism $\Phi$. 

\begin{lemma} Let the notation be as above. 
\begin{itemize}
\item[(1)] We can choose $\Phi$ so that it has the following form with respect to the orthogonal decomposition 
$H=\cF_n\oplus \cF_n^\perp$: 
$$\Phi(s_i)=\left(
\begin{array}{cc}
t_i &r_i  \\
0 &v_i 
\end{array}
\right).$$
\item[(2)] For $\Phi$ as in (1), the quasi-homomorphism $\rho=(\rho^{(0)},\rho^{(1)})$ in Lemma \ref{quasi-homo} 
is expressed as 
$$\rho^{(0)}(s_i)=\left(
\begin{array}{cc}
t_i\hotimes 1 &r_i\hotimes 1  \\
0 &v_i\hotimes 1 
\end{array}
\right),
\quad 
\rho^{(1)}(s_i)=
\left(
\begin{array}{cc}
t_i\hotimes 1 &r_i\hotimes z^*  \\
0 &v_i\hotimes 1 
\end{array}
\right).
$$
In particular, we have 
$$\sum_{i=1}^n\rho^{(1)}(s_i)\rho^{(0)}(s_i)^*=(1_H-p_n)\hotimes 1+p_n\hotimes z^*. $$
\end{itemize}
\end{lemma}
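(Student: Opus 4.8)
The plan is to prove (1) and (2) together, using the Stinespring/Fock-space picture established above. For (1), the key observation is that the Stinespring dilation $H$ of the unital completely positive $G$-map $l_n^G:\cO_n\to \cE_n\subset\B(\cF_n)$ already contains $\cF_n$ as a subspace with $P\Phi(x)P=l_n^G(x)$; since $l_n^G$ lifts $q_n$ and agrees with the inclusion $\cE_n\hookrightarrow\B(\cF_n)$ modulo the compacts $J_n=\K(\cF_n)$, the compression $P\Phi(s_i)P$ differs from $t_i$ by an element of $\K(\cF_n)$. Actually, because $l_n^G$ is literally a right inverse of $q_n$ on the level of generators — one may choose $l_n(s_i)=t_i$, and then the $G$-averaging $l_n^G$ of this choice still sends $s_i$ to $t_i$ since $\tilde\alpha_g(t_i)$ and $t_j$ transform by the same unitary $\pi_\alpha(g)$ — we get exactly $P\Phi(s_i)P=t_i$, i.e. the $(1,1)$-block of $\Phi(s_i)$ with respect to $H=\cF_n\oplus\cF_n^\perp$ is $t_i$ and the $(2,1)$-block is $0$. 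Writing the remaining blocks as $r_i$ (the $(1,2)$-block) and $v_i$ (the $(2,2)$-block) gives the stated triangular form. One should also record that the equivariance of $\Phi$ and of the decomposition $H=\cF_n\oplus\cF_n^\perp$ (both $\pi_H$-invariant) is automatic, so everything is compatible with the $G$-actions.

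For (2), I would substitute the block form of $\Phi(s_i)$ from (1) into the formulas for $\rho^{(0)}$ and $\rho^{(1)}$ in Lemma \ref{quasi-homo}. Here $P$ is the projection onto $\cF_n\hotimes S$, so $P\hotimes 1+(1-P)\hotimes z$ acts as $\mathrm{diag}(1\hotimes 1,\,1\hotimes z)$ in the $\cF_n\oplus\cF_n^\perp$ decomposition. Conjugating the block matrix $\Phi(s_i)\hotimes 1$ by this diagonal unitary leaves the $(1,1)$-entry $t_i\hotimes 1$ and the $(2,2)$-entry $v_i\hotimes 1$ unchanged, and multiplies the $(1,2)$-entry $r_i\hotimes 1$ on the left by $1\hotimes 1$ and on the right by $(1\hotimes z)^*=1\hotimes z^*$, giving $r_i\hotimes z^*$; the $(2,1)$-entry stays $0$. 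This yields precisely the claimed formulas for $\rho^{(0)}(s_i)$ and $\rho^{(1)}(s_i)$.

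Finally, for the displayed identity, I would compute $\sum_i\rho^{(1)}(s_i)\rho^{(0)}(s_i)^*$ by block multiplication. Using $\rho^{(0)}(s_i)^*=\begin{pmatrix} t_i^*\hotimes 1 & 0\\ r_i^*\hotimes 1 & v_i^*\hotimes 1\end{pmatrix}$ and the fact that $\Phi(s_i)$ being isometries with orthogonal ranges forces the block relations $\sum_i t_it_i^*+\sum_i r_ir_i^*$, $\sum_i r_iv_i^*$, $\sum_i v_iv_i^*$ to assemble into $1_H$ (indeed $\Phi$ maps the $\cO_n$ relation $\sum_i s_is_i^*=1$ to $\sum_i\Phi(s_i)\Phi(s_i)^*=1_H$, while $\sum_i t_it_i^*=1_{\cF_n}-p_n$), the cross term carrying $z^*$ collects to $(\sum_i r_ir_i^*)\hotimes z^*$, and one identifies $\sum_i r_ir_i^*$ with the projection $p_n$ of $\cF_n$ onto $H_n^{\otimes 0}$ by comparing $(1,1)$-blocks of $\sum_i\Phi(s_i)\Phi(s_i)^*=1_H$ with $\sum_i t_it_i^*=1_{\cF_n}-p_n$. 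The remaining (diagonal) terms sum to $(1_H-p_n)\hotimes 1$, giving $\sum_i\rho^{(1)}(s_i)\rho^{(0)}(s_i)^*=(1_H-p_n)\hotimes 1+p_n\hotimes z^*$.

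\emph{Main obstacle.} The only genuinely delicate point is part (1): one must be careful that the $G$-averaged lifting $l_n^G$ can still be taken to send $s_i$ to $t_i$ exactly, so that the $(2,1)$-block of $\Phi(s_i)$ vanishes rather than merely being compact. This hinges on choosing the initial (non-equivariant) lift $l_n$ to be the splitting $s_i\mapsto t_i$ on generators and on the transformation rule $\tilde\alpha_g(t_i)=\sum_j \pi_\alpha(g)_{ji}t_j$ matching $\alpha_g(s_i)$ under $q_n$, which makes the averaging act trivially on the generators. Everything else is routine block-matrix bookkeeping in the Stinespring picture.
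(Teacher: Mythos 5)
Your overall route is the same as the paper's: pick a (non-equivariant) u.c.p.\ lifting of $q_n$ that sends $s_i$ exactly to $t_i$, observe that the $G$-average $l_n^G$ still sends $s_i$ to $t_i$ because $\alpha_g(s_i)=\sum_j\pi_\alpha(g)_{ji}s_j$ is mapped to $\tilde\alpha_g(t_i)$, take the Stinespring dilation to get an equivariant $\Phi$, and then do block bookkeeping for (2). The computation in (2) (the relations $\sum_i r_ir_i^*=p_n$, $\sum_i r_iv_i^*=0$, $\sum_i v_iv_i^*=1$ extracted from $\sum_i\Phi(s_i)\Phi(s_i)^*=1_H$ and the isometry relations, then block multiplication) is exactly what the paper does and is fine.

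The genuine gap is in part (1), at the step you yourself single out as the main obstacle: you assert that ``one may choose $l_n(s_i)=t_i$'' without any construction. This is not automatic: a u.c.p.\ lifting produced by nuclearity/Choi--Effros only satisfies $l_n(s_i)=t_i$ modulo $\K(\cF_n)$, and one cannot simply prescribe the values of a u.c.p.\ map on the generators of $\cO_n$ and expect it to remain completely positive and to lift $q_n$. The paper supplies precisely this missing input: it exhibits an explicit representation $\Phi'$ of $\cO_n$ on $\cF_n\oplus\cF_n$ of the form $\Phi'(s_1)=\bigl(\begin{smallmatrix} t_1 & p_n\\ 0 & w_1\end{smallmatrix}\bigr)$, $\Phi'(s_i)=\bigl(\begin{smallmatrix} t_i & 0\\ 0 & w_i\end{smallmatrix}\bigr)$ for $i\geq 2$, and defines $l_n$ as the compression to the first summand, which guarantees both complete positivity and $l_n(s_i)=t_i$ on the nose. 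Without this (or some substitute) your part (1) is incomplete. A second, smaller point: the inference ``$P\Phi(s_i)P=t_i$, i.e.\ \dots the $(2,1)$-block is $0$'' is not a formal consequence of the equality of compressions; it needs the Schwarz/multiplicative-domain computation that the paper carries out, namely $\|\Phi(s_i)(1\odot\xi)-1\odot t_i\xi\|^2=\inpr{l_n^G(s_i^*s_i)\xi}{\xi}-\inpr{l_n^G(s_i)\xi}{t_i\xi}-\inpr{l_n^G(s_i^*)t_i\xi}{\xi}+\|t_i\xi\|^2=0$, which uses that $l_n^G(s_i)=t_i$ is an isometry and $l_n^G(s_i^*s_i)=1$. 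Once these two points are filled in, your argument coincides with the paper's proof.
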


\begin{proof} (1) 
We first construct $l_n^G:\cO_n\rightarrow \cE_n$ explicitly. 
Ignoring the $G$ actions, we can find a representation $\Phi'$ of $\cO_n$ on $\cF_n\oplus \cF_n$ of the form 
$$\Phi'(s_1)=\left(
\begin{array}{cc}
t_1 &p_n  \\
0 &w_1 
\end{array}
\right),$$
$$\Phi'(s_i)=\left(
\begin{array}{cc}
t_i &0  \\
0 &w_i 
\end{array}
\right),\quad 2\leq i\leq n.$$ 
Using $\Phi'$, we define $l_n$ by 
$$\left(
\begin{array}{cc}
l_n(x) &0  \\
0 &0 
\end{array}
\right)
=\left(
\begin{array}{cc}
1 &0  \\
0 &0 
\end{array}
\right)\Phi'(x)
\left(
\begin{array}{cc}
1 &0  \\
0 &0 
\end{array}
\right),
$$
and $l_n^G$ by $l_n^G(x)=\int_G \tilde{\alpha}_{g^{-1}}\circ l_n\circ \alpha_g(x)dg$. 
We have $l_n^G(s_i)=t_i$ for all $1\leq i\leq n$ by construction.

We show that the Stinespring dilation $(\Phi,H)$ of this $l_n^G$ has the desired property. 
Recall that $H$ is the closure of the algebraic tensor product $\cO_n\odot \cF_n$ with respect to 
the inner product 
$$\inpr{x\odot \xi}{y\odot \eta}=\inpr{l_n^G(y^*x)\xi}{\eta},$$
and $\Phi$ is given by the left multiplication of $\cO_n$. 
The space $\cF_n$ is identified with $1\odot \cF_n$, and the unitary representation $\pi_H$ is given by 
$\pi_H(g)(x\odot \xi)=\alpha_g(x)\odot \pi_{\cF_n}(g)\xi$. 
To show that $\Phi$ has the desired property, it suffices to show $\|s_i\odot \xi -1\odot t_i\xi\|=0$ for 
all $\xi\in \cF_n$. 
Indeed, \begin{eqnarray*}
\lefteqn{\|s_i\odot \xi-1\odot t_i\xi\|^2} \\
 &=&\inpr{l_n^G(s_i^*s_i)\xi}{\xi}-\inpr{l_n^G(s_i)\xi}{t_i\xi}-\inpr{l_n^G(s_i^*)t_i\xi}{\xi}+\inpr{t_i\xi}{t_i\xi}
 =0,
\end{eqnarray*}
and we get the statement. 

(2) The first statement follows from (1) and Lemma \ref{quasi-homo}. 
The Cuntz algebra relation implies 
$$p_nr_i=r_i,\quad r_j^*r_i+v_j^*v_i=\delta_{i,j},$$
$$\sum_{i=1}^nr_ir_i^*=p_n,\quad \sum_{i=1}^n r_iv_i^*=0,\quad \sum_{i=1}^n v_iv_i^*=1. $$
These relations and the first statement imply the second statement. 
\end{proof}

\begin{proof}[Proof of Theorem \ref{boundary}] Thanks to the previous lemma, we may assume that 
the class $\theta_*(\delta_{q_n}\hotimes_{\C_1} \xbf)\in KK_G(\cO_n,B)$ is given by a quasi-homomorphism 
$\sigma=(\sigma^{(0)},\sigma^{(1)})$ from $\cO_n$ to $B$ of the form 
$$\sigma^{(0)}(s_i)=\left(
\begin{array}{cc}
t_i\hotimes 1 &r_i\hotimes 1  \\
0 &v_i\hotimes 1 
\end{array}
\right),
\quad 
\sigma^{(1)}(s_i)=
\left(
\begin{array}{cc}
t_i\hotimes 1 &r_i\hotimes u_{\psi,\varphi}^*  \\
0 &v_i\hotimes 1 
\end{array}
\right),
$$
and they satisfy 
$$\sum_{i=1}^n\sigma^{(1)}(s_i)\sigma^{(0)}(s_i)^*=(1_H-p_n)\hotimes 1+p_n\hotimes u_{\psi,\varphi}^*.$$

We set $\tilde{\sigma}^{(0)}=\sigma^{(0)}\oplus \varphi$, $\tilde{\sigma}^{(1)}=\sigma^{(1)}\oplus \psi$, which 
are unital homomorphisms from $\cO_n$ to  $\B((H\oplus \C)\hotimes B)$. 
Then $\tilde{\sigma}=(\tilde{\sigma}^{(0)},\tilde{\sigma}^{(1)})$ is a quasi-homomorphism with 
$$\sum_{i=1}^n\tilde{\sigma}^{(1)}(s_i)\tilde{\sigma}^{(0)}(s_i)^*=\big((1_H-p_n)\hotimes 1+p_n\hotimes u_{\psi,\varphi}^*\big)
\oplus (1_\C\hotimes u_{\psi,\varphi}),$$
which is denoted by $u$. 
Then we can construct a norm continuous path $\{u_t\}_{t\in [0,1]}$ of unitaries in $\C1+\K(H\oplus \C)^G\otimes B^G$ 
satisfying $u(0)=u$ and $u(1)=1$. 
Let $\tilde{\sigma}_t^{(0)}=\tilde{\sigma}^{(0)}$, and let $\tilde{\sigma}_t^{(1)}$ be the homomorphism from $\cO_n$ to 
$\B((H\oplus \C)\hotimes B)$ determined by $\tilde{\sigma}_t^{(1)}(s_i)=u(t)\tilde{\sigma}_t^{(0)}(s_i)$. 
Then $\tilde{\sigma}_t=(\tilde{\sigma}_t^{(0)},\tilde{\sigma}_t^{(1)})$ gives a homotopy of quasi-homomorphisms connecting 
$\tilde{\sigma}$ and $\tilde{\sigma}_1=(\tilde{\sigma}^{(0)},\tilde{\sigma}^{(0)})$. 
This shows $[\tilde{\sigma}]=0$ in $KK_G(\cO_n,B)$, and so $\theta_*(\delta_{q_n}\hotimes_{\C_1}\xbf)=KK_G(\psi)-KK_G(\varphi)$. 
\end{proof}

\begin{remark} 
The above argument shows that there exists a short exact sequence 
$$0\rightarrow\mathrm{Coker}(1-K_{1-*}(\hat{\beta}_{\pi_\alpha}))\rightarrow KK_G^*(\cO_n,B)\rightarrow 
\mathrm{Ker}(1-K_{*}(\hat{\beta}_{\pi_\alpha}))\rightarrow 0. $$
\end{remark}

\begin{remark}
From (\ref{short}), we obtain the 6-term exact sequence (see \cite[Theorem 4.9]{P}), 
$$\CD
KK_G^0(B,\C)@>1-\hotimes[H_n]>>KK_G^0(B,\C)@>>>KK_G^0(B,\cO_n)\\
@AAA@.@VVV\\
KK_G^1(B,\cO_n)@<<<KK_G^1(B,\C)@<1-\hotimes[H_n]<<KK_G^1(B,\C)
\endCD.
$$
In particular, we have the following exact sequence by setting $B=\C$: 
$$ \CD
0@>>>K_1(\cO_n\rtimes_\alpha G)@>>>\\K^G_0(\C) @>1-\hotimes[H_n]>>K_0^G(\C)
@>>>K_0(\cO_n\rtimes_\alpha G)@>>> 0
\endCD.$$
Let $\iota_\alpha :C^*(G)\rightarrow \cO_n\rtimes_\alpha G$ be the embedding map, 
let $(\pi,H_\pi)$ be an irreducible representation of $G$, and let 
$$e(\pi)_{ij}=\dim \pi \int_G \overline{\pi(g)_{ij}}\lambda_gdg\in C^*(G).$$ 
Then the canonical isomorphism from $K_0^G(\C)$ onto $K_0(C^*(G))$ sends the class of $(\pi,H_\pi)$ 
in $K_0^G(\C)$ to $[e(\overline{\pi})_{11}]\in K_0(C^*(G))$.  
Thus we have the exact sequence 
$$ \CD
0@>>>K_1(\cO_n\rtimes_\alpha G)@>>>\Z\hat{G} @>1-[\overline{\pi_\alpha}]>>\Z\hat{G}
@>>>K_0(\cO_n\rtimes_\alpha G)@>>> 0
\endCD,$$ 
where $[\pi]\in \Z\hat{G}$ is sent to $K_0(\iota_\alpha)([e(\pi)_{11}])\in K_0(\cO_n\rtimes G)$. 
With the identification of $K_*(\cO_n\rtimes_\alpha G)$ and $K_*(\cO_n^G)$, 
this recovers the formula of $K_*(\cO_n^G)$ obtained in \cite{MRS}, \cite{PR}.  
\end{remark}


\end{document}